\documentclass[10pt,twoside,a4paper]{article}

\usepackage[utf8]{inputenc}

\usepackage[T1]{fontenc}

\usepackage{amsmath, bm}

\usepackage{amssymb}

\usepackage{microtype}
\usepackage{lmodern}
\usepackage{mathtools}
\usepackage[pagestyles]{titlesec}

\usepackage{placeins}
\usepackage{float}

\usepackage[backend=biber,maxnames=10,backref=true,hyperref=true,safeinputenc]{biblatex}

\DefineBibliographyStrings{english}{%
	backrefpage = {cited on page},
	backrefpages = {cited on pages},
}

\DeclareFieldFormat[report]{title}{``#1''}
\DeclareFieldFormat[book]{title}{``#1''}
\AtEveryBibitem{\clearfield{url}}
\AtEveryBibitem{\clearfield{note}}

\usepackage{graphicx}
\graphicspath{{images/}}

\usepackage{rotating}
\usepackage{enumerate}
\usepackage{multirow}
\usepackage{booktabs}

\usepackage{tikz,pgfplots}
\usepackage{tikz-3dplot}
\usetikzlibrary{arrows,shapes,fadings,intersections,decorations.pathreplacing,decorations.markings,positioning,patterns}
\usetikzlibrary{quotes}
\usetikzlibrary{angles}
\usetikzlibrary{babel}
\usetikzlibrary{calc, arrows.meta}
\usetikzlibrary{shapes.geometric}
\pgfplotsset{compat=newest}

\title{Invertibility of the Fourier Diffraction Relation\\ in Raster Scan Diffraction Tomography}

\author{Peter Elbau$^{1}$\\
	{\footnotesize\href{mailto:email}{peter.elbau@univie.ac.at}}
	\and
	Noemi Naujoks$^{1,2}$\\
	{\footnotesize\href{mailto:email}{noemi.naujoks@univie.ac.at}}
}

\date{\today}

\usepackage[pdftex,colorlinks=true,linkcolor=blue,citecolor=green!50!black,urlcolor=blue,bookmarks=true,bookmarksnumbered=true,pdfusetitle]{hyperref}
\hypersetup
{
	pdfauthor={P. Elbau, N. Naujoks},
	pdftitle={Invertibility of the Fourier Diffraction Relation in Raster Scan Diffraction Tomography},
}

\usepackage[hyperref,amsmath,thmmarks]{ntheorem}
\usepackage{aliascnt}

\newtheorem{lemma}{Lemma}[section]

\newaliascnt{proposition}{lemma}
\newtheorem{proposition}[proposition]{Proposition}
\aliascntresetthe{proposition}

\newaliascnt{corollary}{lemma}
\newtheorem{corollary}[corollary]{Corollary}
\aliascntresetthe{corollary}

\newaliascnt{theorem}{lemma}
\newtheorem{theorem}[theorem]{Theorem}
\aliascntresetthe{theorem}

\theorembodyfont{\normalfont}
\newaliascnt{definition}{lemma}
\newtheorem{definition}[definition]{Definition}
\aliascntresetthe{definition}

\newaliascnt{problem}{lemma}
\newtheorem{problem}[problem]{Problem}
\aliascntresetthe{problem}

\newaliascnt{remark}{lemma}
\newtheorem{remark}[remark]{Remark}
\aliascntresetthe{remark}

\newaliascnt{example}{lemma}

\aliascntresetthe{example}

\theoremstyle{nonumberplain}
\theoremseparator{:}
\theoremheaderfont{\normalfont\itshape}

\theoremsymbol{\ensuremath{\square}}
\newtheorem{proof}{Proof}

\usepackage[a4paper,centering,bindingoffset=0cm,marginpar=2cm,margin=2.5cm]{geometry}
\usepackage[pagestyles]{titlesec}

\titleformat{\section}[block]{\large\sc\filcenter}{\thesection.}{0.5ex}{}[]
\titleformat{\subsection}[runin]{\bf}{\thesubsection.}{0.5ex}{}[.]

\newpagestyle{headers}

{
	
	\headrule
	
	\sethead[\footnotesize\thepage][\footnotesize\sc  P. Elbau, N. Naujoks][]{}{\footnotesize\sc Invertibility of the Fourier Diffraction Relation in Raster Scan Diffraction Tomography}{\footnotesize\thepage}
	
	\setfoot{}{}{}
	
}

\usepackage[font=footnotesize,format=plain,labelfont=sc,textfont=sl,width=0.75\textwidth,labelsep=period]{caption}
\usepackage[labelfont = normalfont]{subcaption}

\usepackage{dsfont}
\usepackage{bbm}
\newcommand{\N}{\mathds{N}}

\newcommand{\R}{\mathds{R}}
\newcommand{\C}{\mathds{C}}
\newcommand{\Sp}{\mathbb{S}}

\newcommand{\ui}{u^{\text{inc}}}

\renewcommand{\S}{\mathds S}

\let\RE\Re
\let\Re=\undefined
\DeclareMathOperator{\Re}{\RE e}

\let\IM\Im
\let\Im=\undefined
\DeclareMathOperator{\Im}{\IM m}
\DeclareMathOperator{\supp}{supp}

\newcommand{\norm}[1]{\left\|#1\right\|}

\newcommand{\inner}[2]{\left<#1,#2\right>}

\begin{document}
	
	\maketitle

	\thispagestyle{empty}
	\begin{center}
		\hspace*{5em}
		\parbox[t]{12em}{\footnotesize
			\hspace*{-1ex}$^1$Faculty of Mathematics\\
			University of Vienna\\
			Oskar-Morgenstern-Platz 1\\
			A-1090 Vienna, Austria}
		\hfil
		\vspace*{0.5cm}
		\hspace*{5em}
		\parbox[t]{18em}{\footnotesize
			\hspace*{-1ex}$^2$Christian Doppler Laboratory for Mathematical\\
			\hspace*{1em}Modelling and Simulation of Next Generation\\
			\hspace*{1em}Medical Ultrasound Devices (MaMSi)\\
			Oskar-Morgenstern-Platz 1\\
			A-1090 Vienna, Austria}
	\end{center}

	\begin{abstract}

		Diffraction tomography aims to recover an object's scattering potential from measured wave fields. In the classical setting, the object is illuminated by plane waves from many directions, and the Fourier diffraction theorem provides a direct relation between the Fourier transform of the object's scattering potential and the Fourier transform of the measurements.

		In many practical imaging systems, however, focused beams are used instead of plane waves. These beams are then translated across the object to bring different regions of interest into focus.

		This article discusses what information about the scattering potential can be extracted from such measurements. As in the classical case, the analysis is based on a recently derived Fourier diffraction relation that relates the measurements to the Fourier coefficients of the scattering potential. However, this relation does not immediately provide an explicit reconstruction formula, but instead leads to a linear equation system for the Fourier coefficients.

		We therefore prove in this work that all Fourier coefficients appearing in these relations are in dimensions higher than two generically uniquely determined. In the two-dimensional case, on the other hand, only a particular subset of the Fourier coverage is uniquely recoverable, while on the remaining region distinct coefficients may produce identical data.

	\end{abstract}
	
	\section{Introduction}
Diffraction tomography is a widely used inverse scattering technique that aims to reconstruct the spatial distribution of an object’s scattering potential from measurements of scattered wave fields. A prominent application is ultrasound tomography \cite{Bor92, DurLitBabChaAze05b, SimHuaDur08}, where both amplitude and phase information of the scattered fields are available.

Diffraction tomography relies on the Born approximation, under which the dependence of the scattered field on the scattering potential becomes linear and the corresponding Fourier diffraction theorem \cite{Wol69} provides an explicit representation of the scattering potential in terms of the measurements in Fourier space. More precisely, it states that the Fourier-transformed measurements coincide with the Fourier transform of the scattering potential along a semicircle in two dimensions or a hemisphere in higher dimensions. Incorporating a collection of measurements from multiple views then generates data on a union of distinct hemispheres in Fourier space, enabling reconstruction of the scattering potential through \textit{filtered backpropagation} \cite{Dev82, KakSla01, KirQueRitSchSet21, Wol69}.

However, the classical theory relies on idealized assumptions about both the incident field and the measurement geometry. In particular, the object is assumed to be illuminated by monochromatic plane waves arriving from a broad range of directions \cite{KakSla01, Nat15, Wol69}.
In practice, though, many imaging systems, such as medical ultrasound \cite{Hos19}, use focused beams that concentrate the energy at a focal region to improve the spatial resolution.
Rather than illuminating the entire object from different directions, these beams are translated to bring different regions into focus.
This scanning setup differs fundamentally from the plane-wave, full-angle illumination assumed in the conventional framework, highlighting a gap between theory and practical setups.

To address this discrepancy, recent work \cite{KirNauSchYan24} extended the classical diffraction tomography framework to account for focused beams by modeling them as superpositions of plane waves while still assuming full-angle data. Building on this, \cite{ElbNauSch26_preprint} further extended diffraction tomography to a raster scan geometry in which a focused beam is emitted from only one side and then translated across the object along a fixed hyperplane. Within this framework, a new Fourier diffraction relation was derived, expressing the Fourier-transformed measurements in terms of the Fourier coefficients of the scattering potential of the object. Here, depending on the scan configuration, some measured data correspond directly to individual coefficients, while others are linear combinations of two coefficients.

 Building on this framework, the present article focuses on the mathematical structure of the resulting Fourier diffraction relation. More precisely, we investigate whether, and under which conditions, the Fourier coefficients of the scattering potential can be uniquely recovered from the resulting system of linear equations.

While the physically relevant case is clearly the three-dimensional setting, we discuss the problem in arbitrary dimensions, with particular attention to the theoretically interesting two-dimensional case where the dimensions of the gathered data and the Fourier data of the scattering potential are equal.

Our results show that in dimensions higher than two, all Fourier coefficients appearing in the equations are, at least in the generic case, uniquely recoverable from the measured data. In two dimensions, however, only a subset of the coefficients is uniquely determined, while on the remaining region, distinct values for the Fourier coefficients produce identical measurements.

These findings are essential for the reconstruction process of raster scan diffraction tomography: only those Fourier coefficients that are uniquely determined by the measurement data can be incorporated into backpropagation. The established invertibility conditions therefore provide the mathematical foundation for guaranteeing unique reconstructibility of the scattering potential within the accessible frequency coverage.

\subsection*{Outline}
We begin in \autoref{sec:problem} with the formulation of the inverse problem and recall the Fourier diffraction relation obtained in \cite{ElbNauSch26_preprint}. In \autoref{sec:couplingset}, we then investigate which pairs of Fourier coefficients of the scattering potential are interconnected by these relations. This structural analysis already reveals fundamental differences between the two-dimensional, the three-dimensional, and the higher-dimensional cases. Accordingly, we address these situations separately in \autoref{sec:higher}, \autoref{subsec:3d}, and \autoref{sec:two}, where we start with the problem in more than three dimensions as the surplus of measurement dimensions simplifies the analysis, and we end with the critical two-dimensional case.

\begin{figure}[t]
	\centering
		\includegraphics[scale = 1]{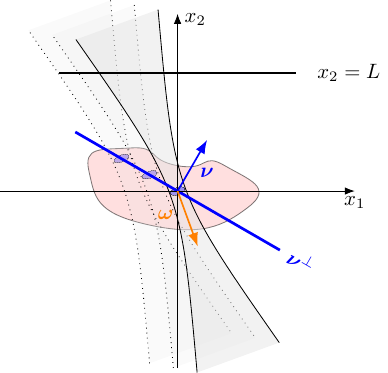}
	\caption{ Schematic overview of the general scan geometry in two dimensions. A focused incident beam propagates in a direction $\omega\in\Sp^{1}$ and scans an object by moving the focal point along a line $\nu^\perp$ orthogonal to the direction $\nu\in\Sp^1$. The resulting scattered waves are measured at every point on a receiver line $\{x\in\R^2\mid x_2 = L\}$ outside the object.}
	\label{fig:scangeo}
\end{figure}

	\section{Problem formulation}\label{sec:problem}
We begin by briefly outlining the experimental setup underlying to our analysis, see \autoref{fig:scangeo}. The aim is to image an unknown object, represented by its complex-valued scattering potential $f\in L^1(\R^d)$ in the arbitrary spatial dimension $d\in\N\setminus\{1\}$, where we are, of course, mainly interested in the physically relevant cases $d\in\{2,3\}$. We assume that the object is embedded in a homogeneous background medium. More precisely, we assume that $\supp(f)\subseteq\mathcal B^d_r$, where we denote by
\[
\mathcal{B}_{r}^d\coloneqq \{ x\in\R^d\mid\norm{x} < r \}
\]
the open ball in $\R^d$ of radius $r>0$ centered at the origin.

As incident wave, we consider a focused acoustic beam which propagates in the direction $\omega\in\Sp^{d-1}\coloneqq\partial\mathcal B_1^d$ and which is then successively translated across the object along a predefined scan plane 
\[
\nu^\perp \coloneqq\left\lbrace x\in\R^d \mid \inner{x}{\nu} = 0\right\rbrace,
\]
where  $\nu\in \Sp^{d-1}$ defines the scan normal and $\inner{\cdot}{\cdot}$ is the Euclidean inner product. For each position $y\in\nu^\perp$, the incident beam $\ui_y$ satisfies the homogeneous Helmholtz equation 
\[
(\Delta+k_0^2)\ui_y (x)=0,\qquad x\in\R^d,
\] 
where $k_0>0$ is the wave number, and is modeled as a translated Herglotz wave \cite{ColKre19},
\begin{align*}
	\ui_{{y}}( x)\coloneqq \int_{\mathbb S^{d-1}_{k_0}}a(s)e^{i\inner{x}{s}}e^{-i\inner{ y }{s}}dS({s}),\quad  {x}\in\R^{d}, \  {y}\in \nu^\perp,
\end{align*}
with Herglotz density $a\in L^2(\Sp_{k_0}^{d-1})$. Here, $\mathbb{S}_{k_0}^{d-1} \coloneqq \partial\mathcal B_{k_0}^d$ denotes the sphere of radius $k_0$ in $\R^d$ and $L^2(\Sp_{k_0}^{d-1})$ is the Lebesgue space of all complex-valued square-integrable functions on this sphere.
To reflect the fact that $\ui$ should propagate in the direction $\omega$, we exclude all plane wave components moving in a direction $s\in\Sp_{k_0}^{d-1}$ with $\inner s\omega\le0$, that is, we enforce $a(s)=0$ for those directions.

As prime example for such an incident focused beam, we consider a Gaussian beam, which we obtain with a Herglotz density of the form of
\begin{align*}
	a(s) \coloneqq \begin{cases}
		e^{-A\norm{\tilde{s}}^2}, \quad &\inner{s}{\omega} >0,
		\\ 0, & \inner{s}{\omega}\leq 0, 
	\end{cases}
\end{align*}
where $\tilde{s} \coloneqq s-\inner{s}{\omega}\omega$ is the component of $s$ orthogonal to $\omega$. Motivated by this, we further assume for the later analysis that the Herglotz density $a\in L^2(\Sp_{k_0}^{d-1})$ fulfills that
\begin{equation}\label{eq:supp_a}
a(s)\neq 0\quad\text{if and only if}\quad \inner{s}{\omega}>0.
\end{equation}

For the wave propagation, we work within the first Born approximation \cite{Wol69}, assuming that the values of $f$ are sufficiently small such that multiple scattering effects can be neglected. For each scan position $y\in\nu^\perp$, the corresponding scattered field $u_y$ is then governed by the inhomogeneous Helmholtz equation \cite[Section~3.3]{NatWub01}
\begin{align*}
	(\Delta+k_0^2)u_y(x) = -f(x) \ui_y(x),\qquad x\in\R^d,
 \end{align*}
together with the Sommerfeld radiation condition \cite{Som12}, ensuring uniqueness of the solution.

For each scan position, measurements of the scattered field are collected on the $(d-1)$-dimensional hyperplane $\R^{d-1}\times\{L\}$ with $L>r$ chosen sufficiently large such that this measurement plane does not intersect the support of the scattering potential of the object.

 The inverse problem now consists in reconstructing the scattering potential $f$ from the measured data
 \begin{equation}\label{eq:MeaseurementData}
m\colon(\R^{d-1}\times\{L\})\times\nu^\perp\to\C,\quad m(x,y)\coloneqq u_y(x).
 \end{equation}
Thus, the acquisition process yields a $2(d-1)$-dimensional dataset. In particular, the dimension of the data exceeds that of the object for $d>2$.

\subsection{Fourier diffraction relation for scanning data}
To recover from the measurement data in \autoref{eq:MeaseurementData} the scattering potential, we perform a $2(d-1)$-dimensional Fourier transform of the data $m$.
The Fourier diffraction theorem adapted to scanning configurations, derived in \cite{ElbNauSch26_preprint}, then provides an algebraic relation between these Fourier-transformed measurements and the Fourier transform of the scattering potential. Before stating this result, we introduce some notations that will be used throughout this article. 
 
For a non-zero vector $v\in \R^d$, the associated open half-space is given by
\begin{equation}\label{def:H}
	\Omega_v\coloneqq\left\lbrace x\in\R^d\mid \inner{x}{v}>0\right\rbrace, 
\end{equation}
and the hemisphere in the direction of $v$ with radius $k_0$ is denoted by
\begin{equation}\label{def:Somega}
	S_v \coloneqq \Sp_{k_0}^{d-1} \cap \Omega_v.
\end{equation}

Moreover, for $v\in \mathbb{S}^{d-1}$, the reflection of a point $x$ across the hyperplane $v^\perp$ is given by the Householder transform $H_v\colon\R^d\to\R^d$,
\begin{equation}\label{eq:I}
	H_v x \coloneqq x - 2 \inner{x}{v} v.
\end{equation}
In particular, we consider the hemisphere $S_\omega$ oriented along the beam direction $\omega\in \Sp^{d-1}$, and the reflection $H_\nu$ across the scan plane $\nu^\perp$ orthogonal to the scan normal $\nu\in\Sp^{d-1}$. We then divide $S_\omega$ into two disjoint parts $S_\omega= \Sigma_1\sqcup\Sigma_2$, where
	\begin{align}\label{def:Sigma}
		\Sigma_1 \coloneqq \{ \sigma \in S_\omega \mid H_\nu\sigma \notin S_\omega \} \quad \text{and} \quad
		\Sigma_2 \coloneqq \{ \sigma \in S_\omega \mid H_\nu\sigma \in S_\omega \}.
	\end{align}
See \autoref{subfig:Sigmas} for an illustration of this decomposition.
\begin{figure}[t]
	\centering
		\begin{subfigure}{0.47\textwidth}
			\includegraphics{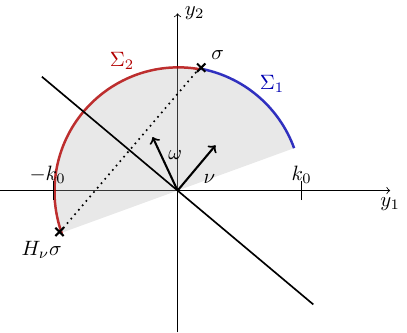}
			\caption{Symmetry on $S_\omega$ with respect to the scan plane $\nu^\perp$.}
			\label{subfig:Sigmas}
		\end{subfigure}\hfil
		\begin{subfigure}{0.45\textwidth}
			\includegraphics{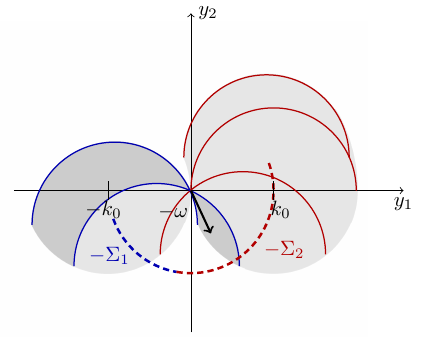}
			\caption{Union of upper semicircles centered at $-S_\omega$.}
			\label{subfig:union}
		\end{subfigure}
		
	\caption{(a) Illustration of the symmetry with respect to the scan plane $\nu^\perp$. The subset $\Sigma_1$ (blue) contains directions whose reflections lie outside $ S_{\omega}$, whereas for $\Sigma_2$  (red) the reflections remain inside. (b) Region in Fourier space that can be accessed by the measurements. It is obtained as the union of semicircles centered at $-S_\omega = -(\Sigma_1 \sqcup \Sigma_2)$. Representative semicircles are colored according to their centers: blue for $\Sigma_1$ and red for $\Sigma_2$. The dark gray region corresponds to $\mathcal{Y}_1$ (centers in $-\Sigma_1$), and the light gray region to $\mathcal{Y}_2$ (centers in $-\Sigma_2$).
In this configuration, the two regions do not intersect. In general, however, $\mathcal{Y}_1$ and $\mathcal{Y}_2$ do not have to be disjoint, see \autoref{lem:intY}.}
	\label{fig:symeta}
\end{figure}

Let $\phi\coloneqq\mathcal{F}_df$ denote the $d$-dimensional Fourier transform of the scattering potential $f$. According to \cite{ElbNauSch26_preprint}, the function $\phi$ is then related via the Fourier diffraction relation
\begin{equation}\label{eq:sys_eq0}
	\hat m(\eta, \sigma) =
	\begin{cases}
		a(\sigma) \phi( \eta - \sigma ) & \text{if } \sigma \in \Sigma_1, \vspace{2mm} \\
		a(\sigma) \phi( \eta - \sigma ) + a(H_\nu\sigma) \phi( \eta - H_\nu\sigma ) & \text{if } \sigma \in \Sigma_2
	\end{cases}
\end{equation}
for all $\eta\in S_{e_d}$, with $e_d$ denoting the $d$th standard basis vector in $\R^d$, to the reduced measurements $\hat m\colon S_{e_d}\times S_\omega\to\C$ which are with some explicitly known prefactor $C\colon S_{e_d}\times S_\omega\to\C$ given by
\[
\hat m(\eta,\sigma)\coloneqq C(\eta,\sigma)\int_{\nu^\perp}\int_{\R^{d-1}\times\{L\}}m(x,y) e^{-i\inner{\eta}{x}} e^{i\inner{\sigma}{y}}dS(x)dS(y).
\]
Since the Herglotz density $a$ is assumed to be given and the reduced measurements $\hat m$ can be directly calculated from the measurements $m$, \autoref{eq:sys_eq0} is a linear equation system for the values of the Fourier transform $\phi$ on the set $\mathcal Y_1\cup\mathcal Y_2$ where
\begin{equation}\label{eq:Y1_Y2}
	\mathcal{Y}_1\coloneqq\{\eta-\sigma \mid \eta\in S_{e_d},\ \sigma\in\Sigma_1\}\quad\text{ and } \quad \mathcal{Y}_2\coloneqq\left\lbrace \eta-\sigma \mid \eta\in S_{e_d}, \ \sigma\in\Sigma_2\right\rbrace
\end{equation}
correspond to the points appearing in the first and second case of \autoref{eq:sys_eq0}, respectively.

Geometrically, the sets $\mathcal{Y}_1$ and $\mathcal Y_2$ can be seen as unions of hemispheres centered at $-\Sigma_1\coloneqq\{-\sigma\mid \sigma\in\Sigma_1\}$ and $-\Sigma_2\coloneqq\{-\sigma\mid \sigma\in\Sigma_2\}$, see \autoref{subfig:union}.

\subsection{Reconstructibility of the scattering potential}
Unless we are in the case where $\Sigma_2=\emptyset$, that is, if $\nu$ is parallel to $\omega$ so that we are shifting the incident beam perpendicular to the incident direction~$\omega$, \autoref{eq:sys_eq0} is only an implicit equation for the values of the Fourier transform $\mathcal F_df$ of the scattering potential $f$. Even though we enforced with \autoref{eq:supp_a} that none of the coefficients of the linear system vanishes, it is a priori not clear if this system allows us to recover $\mathcal F_df$.

In principle, we could first recover the Fourier coefficients obtained from the first, direct relation in \autoref{eq:sys_eq0} and then use analytic continuation to determine the remaining ones, since $f$ is compactly supported and hence $\mathcal F_df\in C^\infty(\R^d;\C)$ is an analytic function. However, such arguments have limited practical value, as real measurement data are discrete.

We therefore do not want to rely on analyticity and will not impose any regularity on the solutions $\phi\colon\R^d\to\C$ of \autoref{eq:sys_eq0} for now. The question is then for which points $y\in\R^d$ the values $\phi(y)$ are uniquely determined by \autoref{eq:sys_eq0}.

To get rid of one of the coefficients $a(\sigma)$ and $a(H_\nu\sigma)$ in \autoref{eq:sys_eq0} by normalizing it to one, we introduce for some arbitrarily chosen $\chi\in(0,1)$ the function $b\colon\Sigma_{2,\chi}\to\C\setminus\{0\}$ on the open neighborhood
\begin{equation}\label{eq:SigmaNeigh}
\Sigma_{2,\chi} \coloneqq \left\{ \tilde\chi \sigma \,\middle|\, 
\sigma \in \Sigma_2,\ \tilde\chi \in (1-\chi,\ 1+\chi)\right\} \subseteq \R^d
\end{equation}
	of $\Sigma_2$ by
	\begin{equation}\label{eq:defb}
		b(\tilde\chi\sigma) \coloneqq \frac{a(\sigma)}{a(H_\nu\sigma)}\text{ for all }\sigma\in\Sigma_2,\,\tilde\chi\in(1-\chi,1+\chi).
	\end{equation}
	(Hereby, we only extended $b$ to the open set $\Sigma_{2,\chi}$ to be able to differentiate it like a function on $\R^d$ instead of having to use covariant derivatives on the sphere.)

Let $\phi$ and $\tilde \phi $ be two solutions of \autoref{eq:sys_eq0}. Since $a(\sigma)\neq 0$ for every $\sigma\in\Sigma_1$ and $a(H_\nu\sigma)\neq0$ for every $\sigma\in\Sigma_2$, we can divide the first equation in \autoref{eq:sys_eq0} by $a(\sigma)$ and the second equation by $a(H_\nu\sigma)$. Then, the difference $g\coloneqq\phi-\tilde\phi$ satisfies the homogeneous system
\begin{equation}\label{eq:sys_eq} 
	0= \begin{cases}  g( \eta - \sigma) & \text{if } \sigma \in \Sigma_1, \vspace{2mm} \\ b(\sigma) g( \eta - \sigma) + g( \eta - H_\nu\sigma) & \text{if } \sigma \in \Sigma_2 \end{cases} 
\end{equation}
for all $\eta\in S_{e_d}$. Thus, we have for an arbitrary point $y\in\R^d$ that $\phi (y)= \tilde\phi(y)$ for all solutions $\phi$ and $\tilde\phi$ of \autoref{eq:sys_eq0} if and only if every solution of the homogeneous system satisfies $g(y)=0$.

The aim of this work is to characterize all such points $y\in\R^d$.
\begin{problem}\label{pr:main}
Let $d\in\N\setminus\{1\}$ be the dimension of the problem, $k_0>0$ be an arbitrary wave number, $\nu,\omega\in\Sp_{k_0}^{d-1}$ be two directions describing the scan normal and the beam direction, the sets $\Sigma_1,\Sigma_2\subseteq S_\omega$ be given by \autoref{def:Sigma}, the corresponding regions $\mathcal Y_1$ and $\mathcal Y_2$ be defined by \autoref{eq:Y1_Y2}, $\chi\in(0,1)$ be arbitrarily chosen, and $b\colon\Sigma_{2,\chi}\to\C\setminus\{0\}$ be a function on the open neighborhood $\Sigma_{2,\chi}$ from \autoref{eq:SigmaNeigh} with $b(\sigma)=b(\tilde\chi\sigma)$ for all $\sigma\in\Sigma_2$ and $\tilde\chi\in(1-\chi,1+\chi)$. 

What is the largest set $Y\subseteq\mathcal Y_1\cup\mathcal Y_2$ for which every solution $g\colon\R^d\to\C$ of \autoref{eq:sys_eq} fulfills $g(y)=0$ for all $y\in Y$?
\end{problem}

Clearly, the first equation in \autoref{eq:sys_eq} immediately ensures that $g=0$ on $\mathcal Y_1$.

\begin{lemma}\label{lem:uqS1}
	Suppose $g\colon \R^d\to \C$ solves \autoref{eq:sys_eq} in the setting of \autoref{pr:main}.
	Then, we have $g= 0$  on the entire set  $\mathcal Y_1$.
\end{lemma}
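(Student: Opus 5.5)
This is immediate from the definitions, so the plan is a direct unwinding with no real obstacle. Take an arbitrary point $y\in\mathcal Y_1$. By the definition of $\mathcal Y_1$ in \autoref{eq:Y1_Y2}, there exist $\eta\in S_{e_d}$ and $\sigma\in\Sigma_1$ with $y=\eta-\sigma$. The homogeneous system \autoref{eq:sys_eq} holds for all $\eta\in S_{e_d}$. For this particular pair $(\eta,\sigma)$ with $\sigma\in\Sigma_1$, its first case applies and reads $0=g(\eta-\sigma)=g(y)$. Since $y\in\mathcal Y_1$ was arbitrary, $g$ vanishes identically on $\mathcal Y_1$.

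It is worth recording why the first case of \autoref{eq:sys_eq} carries no coefficient. It was obtained from the first case of \autoref{eq:sys_eq0} by subtracting the equations for two solutions $\phi,\tilde\phi$. The result was then divided by $a(\sigma)$, which is nonzero for $\sigma\in\Sigma_1\subseteq S_\omega$ by assumption \autoref{eq:supp_a}. So the normalization is legitimate and the equation really does say $g(\eta-\sigma)=0$.

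The only point needing care is the case distinction itself. The two cases of \autoref{eq:sys_eq} are indexed by $\sigma$, not by the point $y$. A point $y\in\mathcal Y_1\cap\mathcal Y_2$ may also appear in second-case equations for other representations $y=\eta'-\sigma'$ with $\sigma'\in\Sigma_2$. This does not affect the argument: one representation with $\sigma\in\Sigma_1$ suffices to force $g(y)=0$, and no consistency with the other equations needs to be checked.
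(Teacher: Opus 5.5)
Your argument is correct and is exactly the paper's: the lemma is justified there by the single observation that the first case of \autoref{eq:sys_eq} reads $g(\eta-\sigma)=0$ for every $\eta\in S_{e_d}$ and $\sigma\in\Sigma_1$, that is, for every point of $\mathcal Y_1$. Your extra remarks on the normalization by $a(\sigma)\neq0$ and on points of $\mathcal Y_1\cap\mathcal Y_2$ are accurate but not needed, since the lemma concerns arbitrary solutions of the homogeneous system.
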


While the uniqueness on $\mathcal{Y}_1$ is trivial, the analysis on the set $\mathcal Y_2$ is more challenging. 

For clarity, we first summarize the main results regarding the \autoref{pr:main}. The detailed proofs and technical lemmas are developed in the subsequent sections. Our results in higher dimensions ($d>2$) rely on the continuity of the solution $g$ of the homogeneous problem, while we do not impose any regularity in the two-dimensional case.
We denote by $C^p(U; \C)$ for an open subset $U\subseteq\R^d$ the space of $p$ times continuously differentiable functions $g\colon U\to\C$. And we use for an arbitrary $\sigma\in\R^d\setminus\{0\}$ the orthogonal projection
\begin{equation}\label{eq:projection}
\pi_\sigma\colon\R^d\to\R^d,\qquad\pi_\sigma x\coloneqq x-\inner x\sigma\frac{\sigma}{\norm\sigma^2},
\end{equation}
onto $\sigma^\perp$.

\begin{description}
	\item[Higher dimensions ($d>2$):]  In dimensions $d>2$, every solution $g\in C(\R^d;\C)$ of \autoref{eq:sys_eq} vanishes on the entire set $\mathcal Y_1\cup\mathcal{Y}_2$ provided that $b\in C^2(\Sigma_{2,\chi};\C)$ and
	\[
	\overline{\left\lbrace \sigma\in \Sigma_2 \mid \nabla b(\sigma)\notin\C\pi_\sigma\nu \right\rbrace } = \Sigma_2.
	\]
	This result is formally stated and proved for $d>3$ in \autoref{thm:uq2} and for $d=3$ in \autoref{thm:uq3}.
	\item[Two dimensions ($d=2$):] In two dimensions, we show that 
	every solution $g\colon\R^2\to\C$ of \autoref{eq:sys_eq} (not necessarily continuous anymore) vanishes almost everywhere on the subset $\mathcal Y_1\cup\tilde{\mathcal Y}$,
	\[
	\tilde{\mathcal{Y}} \coloneqq \left\lbrace \eta - H_\nu\sigma \mid \eta \in (-\Sigma_1) \cap S_{e_2},\ \sigma \in  \Sigma_2 \cap S_{-e_2},\  H_\nu\sigma\notin  S_{-e_2} \right\rbrace \subseteq \mathcal{Y}_2\setminus\mathcal{Y}_1;
	\]
	and that there exists for almost every point $y\in\mathcal Y_2\setminus\tilde{\mathcal Y}$ a solution $g\colon\R^2\to\C$ of \autoref{eq:sys_eq} with $g(y)\ne0$.
	This result is formalized in \autoref{thm:uq1}. 
\end{description}

	\section{Structure of the System through Coupling Sets}\label{sec:couplingset}
We start the analysis of \autoref{pr:main} by examining the second equation of \autoref{eq:sys_eq}, namely
\begin{equation}\label{eq:homY2}
0= b( \sigma )g( \eta - \sigma ) + g( \eta - H_\nu\sigma ), \qquad \ \eta\in S_{e_d}, \ \sigma\in\Sigma_2.
\end{equation}
A key observation is that the representation of a point $y\in\mathcal{Y}_2$ via $y = \eta - \sigma$ with $\sigma\in\Sigma_2$ and $\eta\in S_{e_d}$ is, in general, not unique. Different pairs $(\eta, \sigma)$ can produce the same point $y$, but lead to distinct coupled points $\eta- H_\nu\sigma$.
\begin{definition}\label{def:Fy}
	For $y \in \R^d$  we introduce the set $F_y \subseteq \R^d$ of all points $z\in \R^d$ for which there exist values $\eta \in S_{e_d}$ and $\sigma\in\Sigma_2$ with
	\[
	y = \eta - \sigma \quad \text{and} \quad z = \eta - H_\nu\sigma.
	\]
	We call $F_y$ the \emph{coupling set} of $y$.
\end{definition}

In other words, $F_y$ collects all points $z$ whose images $g(z)$ show up together with $g(y)$ in at least one equation of the system. By this, we can group all equations containing the term $g(y)$.

Note that we have $F_y\ne\emptyset$ if and only if $y\in\mathcal Y_2$, as $\mathcal Y_2$ is by definition the set of all points $y\in\R^d$ of the form $y=\eta-\sigma$ with $\eta\in S_{e_d}$ and $\sigma\in\Sigma_2$ which then leads to the point $\eta-H_\nu\sigma\in F_y$.
The following lemma provides an explicit characterization of the coupling set $F_y$. 
\begin{lemma}\label{lem:Fy}
	In the setting of \autoref{pr:main}, we have for the coupling set $F_y$ of an arbitrary point $y\in\R^d$ the identity
	\begin{equation}\label{eq:Fy}
		F_{y} = \left\lbrace y+2\inner{\sigma}{\nu}\nu\mid \sigma\in\Sigma_2\cap (S_{e_d}-y)\right\rbrace, 
	\end{equation}
	where $S_{e_d}-y=\{\sigma-y\in \R^d\mid \sigma\in S_{e_d}\}\subseteq \R^d$ denotes the upper hemisphere centered at $-y$.
\end{lemma}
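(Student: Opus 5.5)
The plan is to prove the set identity \autoref{eq:Fy} by showing both inclusions directly from \autoref{def:Fy}. Everything rests on one algebraic observation: by the Householder formula \autoref{eq:I}, $\sigma - H_\nu\sigma = 2\inner{\sigma}{\nu}\nu$.

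For the inclusion ``$\subseteq$'', take $z\in F_y$. Then there are $\eta\in S_{e_d}$ and $\sigma\in\Sigma_2$ with $y=\eta-\sigma$ and $z=\eta-H_\nu\sigma$. Eliminating $\eta$ gives
\[
z = y+\sigma-H_\nu\sigma = y+2\inner{\sigma}{\nu}\nu .
\]
It remains to check that $\sigma\in S_{e_d}-y$. The relation $\eta=y+\sigma$ only gives $\sigma=\eta-y$ for some $\eta\in S_{e_d}$, which means $\sigma$ lies in the hemisphere $S_{e_d}$ translated by $-y$. This is exactly the set written $S_{e_d}-y=\{\eta-y\mid\eta\in S_{e_d}\}$ in the statement, so $\sigma\in\Sigma_2\cap(S_{e_d}-y)$ as required.

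For the inclusion ``$\supseteq$'', take $\sigma\in\Sigma_2\cap(S_{e_d}-y)$ and set $\eta\coloneqq y+\sigma$. Membership in $S_{e_d}-y$ means precisely $\eta\in S_{e_d}$, and by construction $y=\eta-\sigma$. Applying the same identity,
\[
\eta-H_\nu\sigma = y+\sigma-H_\nu\sigma = y+2\inner{\sigma}{\nu}\nu ,
\]
so this point lies in $F_y$ by \autoref{def:Fy}.

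No step is a genuine obstacle. The only point needing care is the bookkeeping of the translated hemisphere: we must make sure the statement's description of $S_{e_d}-y$ as the upper hemisphere centered at $-y$ is read as the pointwise translate $\{\eta-y\mid\eta\in S_{e_d}\}$. Note that the constraints $\eta\in S_{e_d}$ and $\sigma\in\Sigma_2$ transfer into $\sigma\in\Sigma_2\cap(S_{e_d}-y)$ with no loss in either direction, since $\eta$ is uniquely determined by $y$ and $\sigma$. In particular, the argument needs no assumptions on $d$, $\omega$ or $b$.
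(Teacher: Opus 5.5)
Your proposal is correct and follows essentially the same route as the paper: the paper identifies the admissible $\sigma$ as $\Sigma_2\cap(S_{e_d}-y)$ via $\eta=y+\sigma$ and then uses $\sigma-H_\nu\sigma=2\inner{\sigma}{\nu}\nu$. You just write out the two inclusions explicitly where the paper compresses them into a single characterization.
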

\begin{proof}
	Let $y\in\R^d$. The set $M_y$ of all points $\sigma\in\Sigma_2$ for which there exists a value $\eta\in S_{e_d}$ satisfying $y=\eta-\sigma$ can be written as the intersection $$M_y= \Sigma_2\cap(S_{e_d}-y).$$ The points $z$ in $F_y$ are then characterized by 
	\[
	z = (y+\sigma)-H_\nu\sigma = y+2 \inner{\sigma}{\nu}\nu\quad \text{with } \sigma\in M_y,
	\]
	that is, \autoref{eq:Fy}.
\end{proof}

Fix $y\in\R^d$ and let $z\in F_y$. By definition of $F_y$, there exists at least one $\sigma\in\Sigma_2\cap(S_{e_d}-y)$ with \[z-y=2\inner\sigma\nu\nu\] and the corresponding value $\eta=\sigma+y$. For such a choice of $\sigma$ we have 
\[
0= b(\sigma)g(y) +g(z).
\]
In general, however, different $\sigma$ may correspond to the same pair of points $(y,z)$, leading to different coefficients $b(\sigma)$, and thus to potentially distinct relations between $g(y)$ and $g(z)$.

\begin{definition}\label{def:C}
	For every $y\in\R^d$ and every $\lambda\in \R$, we define the set
	\begin{equation}\label{eq:C}
		C_{y,\lambda} \coloneqq \left\{ \sigma \in \Sigma_2 \cap (S_{e_d}-y) \mid \inner{\sigma}{\nu} = \lambda \right\} 
	\end{equation}
	of all points in $\Sigma_2 \cap (S_{e_d}-y)$ lying in the plane perpendicular to $\nu$ with distance $\lambda$ to the origin.
\end{definition}

With these definitions, \autoref{eq:homY2} can equivalently be written as
\begin{equation}\label{eq:homFy}
	 0 = b(\sigma)g(y)+g(z)\quad\text{for all}\quad y\in\R^d,\,z\in F_y,\,\sigma\in C_{y,\frac12\inner{z-y}\nu}.
\end{equation}

\begin{remark}\label{rem:F_C_dimension}
In view of \autoref{eq:homFy}, we see the fundamental difference between the different dimensions:
\begin{itemize}
\item
For $d>2$, the intersection of $\Sigma_2$ and $S_{e_d}-y$ in the characterization of $F_y$ in \autoref{eq:Fy} is, in the case where the intersection is transversal and non-trivial, a relatively open subset of a $(d-2)$-dimensional sphere. Consequently, $F_y$ contains infinitely many points and the value $g(y)$ is involved in a continuum of equations in \autoref{eq:homFy}.

If $d>3$ and the relatively open subsets $\Sigma_2$ and $S_{e_d}-y$ of two hyperspheres intersect transversally and non-trivially with the hyperplane $\nu^\perp+\lambda\nu$, $\lambda\coloneqq\frac12\inner{z-y}\nu$ as in \autoref{eq:C}, then their intersection $C_{y,\lambda}$ is a $(d-3)$-dimensional manifold. In particular, it contains infinitely many points, too.
This means that we can expect to have for most pairs $(y,z)$ with $z\in F_y$ infinitely many equations in \autoref{eq:homFy} relating the terms $g(y)$ and $g(z)$.

\item
In three dimensions, the set $C_{y,\lambda}$ is the intersection of the subset $\Sigma_2\cap(S_{e_d}-y)$ of a circle with a plane, and thus consists, in the non-degenerate case, of at most two points. For a fixed pair $(y,z)$ of points, we will therefore, in general, not have multiple equations relating the two terms $g(y)$ and $g(z)$.

But we will still have for most points $y\in\mathcal Y_2$ that $g(y)$ appears in infinitely many equations in \autoref{eq:homFy}.

\item
In two dimensions, finally, the set $F_y$ contains according to \autoref{lem:Fy} (since $\Sigma_2$ and $S_{e_2}-y$ are two circular arcs) at most two points unless $y=0$. Hence, the term $g(y)$ can appear for $y\ne0$ in at most two equations from \autoref{eq:homFy}.

This difference of the coupling set $F_y$ between the case $d=2$ and $d>2$ is illustrated in \autoref{fig:Fy}.
\end{itemize}
\end{remark}

We will therefore study the unique solvability of \autoref{eq:homFy} in the cases $d=2$, $d=3$, and $d>3$ separately.

\definecolor{myblue}{rgb}{0.4,0.7,1}
\begin{figure}
	\centering
	\begin{subfigure}{0.45\textwidth}
		\includegraphics{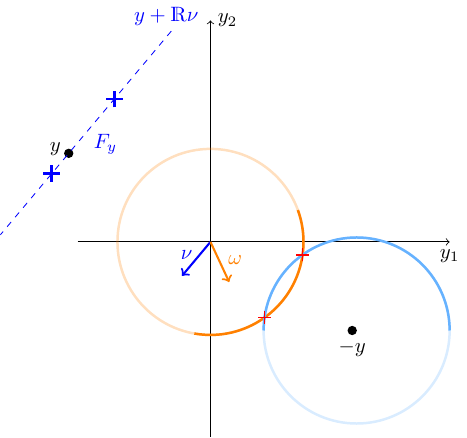}
		\caption{$F_y$ contains discrete points when $d=2$.}
	\end{subfigure}\hfill
	\begin{subfigure}{0.45\textwidth}
	\includegraphics{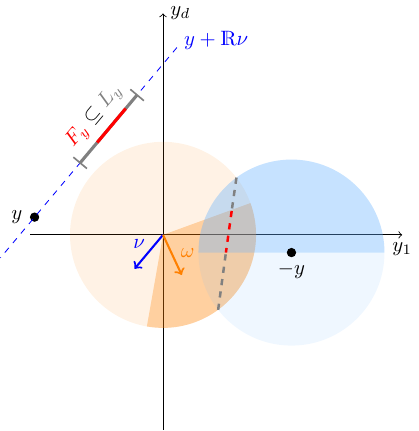}
		\caption{$F_y$ forms an open subset of the line segment $L_y$ when $d>2$.}
	\end{subfigure}
	\caption{Illustration of the coupling set $F_y$ defined in \autoref{eq:Fy}. The full spheres $\mathbb{S}^{d-1}_{k_0}$ and $\mathbb{S}^{d-1}_{k_0}-y$ are shown in light orange and light blue, respectively, while the subsets $\Sigma_2$ and $S_{e_d}-y$ are highlighted in saturated colors. (a) For $d=2$, the intersection $\Sigma_2\cap(S_{e_d}-y)$ consists of at most two points, whose projection onto the line $y+\mathbb{R}\nu$ yields the coupling set $F_y$. (b) For $d>2$, the dashed gray curve represents $\mathbb{S}^{d-1}_{k_0}\cap(\mathbb{S}^{d-1}_{k_0}-y)$, while the dashed red curve highlights the restricted intersection $\Sigma_2\cap(S_{e_d}-y)$. Their projections onto $y+\mathbb{R}\nu$ give the segment $L_y$ and the coupling set $F_y$, respectively.}
	\label{fig:Fy}
\end{figure}

\subsection{Graph representation of the system in two dimensions}
We first consider the two dimensional setting in \autoref{pr:main}. In this case, the term $g(y)$ of each point $y\in\mathcal Y_2\setminus\{0\}$ appears in the system in \autoref{eq:homFy} in at most two equations. Unless we get two equations coupling $g(y)$ twice with the same term $g(z)$, these equations alone are not sufficient to determine all occurring values uniquely. We therefore seek to combine these equations to a system with the same amount of unknowns and equations.

To visualize these relations in a clear and structured way, we will make use of graph representations. More precisely, we view the set $\mathcal Y_2$ as vertices and we draw an edge between two points whenever there exists an equation that directly couples their images. In other words, two vertices $y,z\in\mathcal{Y}_2$ are connected by an edge exactly when $z\in F_y$.

\begin{definition}\label{def:graph}
	We call $(\mathcal Y_2,E)$ the graph associated with the equation system in \autoref{eq:homFy}, where the vertices are the points in $\mathcal Y_2$ and the edges are given by
	\[ E\coloneqq\{\{y,z\}\in\mathcal Y_2\times\mathcal Y_2\mid z\in F_y\}. \]
	Moreover, a point $z\in \mathcal{Y}_2$ is called a neighbor of $y$ if $\{y, z\}\in E$.
\end{definition}

For a given point $y\in\mathcal Y_2$, we can explicitly determine its neighboring points.

\begin{lemma}\label{lem:By2}
	We consider the setting of \autoref{pr:main} for $d=2$. For all values $\eta\in S_{e_2}$ and $\sigma\in\Sigma_2$ with $\eta\neq\sigma$, the coupling set $F_y$ of the point $y\coloneqq\eta-\sigma$ is given by
	\begin{align}\label{eq:By2}
		F_{y} = \begin{cases}
			\left\lbrace \eta-H_\nu\sigma, H_\nu\eta-\sigma\right\rbrace  \quad &\text{if }\eta\in (-\Sigma_2)\cap S_{e_2} , \ \sigma\in \Sigma_2\cap S_{-e_2}, \\
			\left\lbrace\eta-H_\nu\sigma\right\rbrace &\text{otherwise}. 
		\end{cases}
	\end{align}
\end{lemma}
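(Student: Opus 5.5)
The plan is to start from \autoref{lem:Fy}, which gives $F_y=\{y+2\inner{\tilde\sigma}{\nu}\nu\mid\tilde\sigma\in\Sigma_2\cap(S_{e_2}-y)\}$. The whole task then reduces to determining the set $M_y\coloneqq\Sigma_2\cap(S_{e_2}-y)$ for $y=\eta-\sigma$. Equivalently, we need all pairs $(\tilde\eta,\tilde\sigma)\in S_{e_2}\times\Sigma_2$ with $\tilde\eta-\tilde\sigma=\eta-\sigma$.

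In two dimensions, the full circles $\Sp^1_{k_0}$ and $\Sp^1_{k_0}-y$ with $y\ne0$ are distinct circles of equal radius. They therefore meet in at most two points, and these points are symmetric with respect to the line through their centers. Since $\eta\ne\sigma$, we have $y\ne0$. One intersection point is obviously $\tilde\sigma=\sigma$, with $\tilde\eta=\eta$. The decisive observation is that the other one is $\tilde\sigma=-\eta$, with $\tilde\eta=-\sigma$. Indeed, $-\eta\in\Sp^1_{k_0}$, and $-\eta+y=-\sigma\in\Sp^1_{k_0}$. So the only candidate pairs are $(\eta,\sigma)$ and $(-\sigma,-\eta)$, and these coincide only if $\eta=-\sigma$, in which case the set is a single point anyway.

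The second candidate belongs to $M_y$ exactly when $-\eta\in\Sigma_2$ and $-\sigma\in S_{e_2}$. These conditions are $\eta\in-\Sigma_2$ and $\sigma\in S_{-e_2}$. Combined with the standing assumptions $\eta\in S_{e_2}$ and $\sigma\in\Sigma_2$, this is precisely the first case of \autoref{eq:By2}.

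In that case the second element of $F_y$ is
\[
y+2\inner{-\eta}{\nu}\nu=\eta-\sigma-2\inner{\eta}{\nu}\nu=H_\nu\eta-\sigma.
\]
The first element is $y+2\inner\sigma\nu\nu=\eta-H_\nu\sigma$, in both cases. In the ``otherwise'' case only $\tilde\sigma=\sigma$ lies in $M_y$, which gives the singleton. If the two candidates happen to produce the same point, the set notation still covers it.

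The only real subtlety is to justify rigorously that two circles of equal radius whose centers differ by $y\ne0$ meet in at most these two points. I would do this algebraically. From $\norm{\tilde\sigma}=k_0$ and $\norm{\tilde\sigma+y}=k_0$ we get $\inner{\tilde\sigma}{y}=-\tfrac12\norm y^2$, so $\tilde\sigma$ lies on a line. A line intersects a circle in at most two points, and $\sigma$ and $-\eta$ are two such points. This completes the argument.
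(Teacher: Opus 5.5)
Your proposal is correct and follows the paper's proof essentially verbatim. Like the paper, you reduce via \autoref{lem:Fy} to $\Sigma_2\cap(S_{e_2}-y)\subseteq\{\sigma,-\eta\}$, check exactly when $-\eta$ qualifies, and compute $y+2\inner{-\eta}{\nu}\nu=H_\nu\eta-\sigma$. Your algebraic justification via the line $\inner{\tilde\sigma}{y}=-\frac12\norm y^2$ is a nice addition. Note, though, that in the case $\eta=-\sigma$ the phrase ``$\sigma$ and $-\eta$ are two such points'' does not suffice on its own: you need the line to be tangent to the circle, which it is because then $\norm y=2k_0$. The paper likewise only asserts this case.
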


\begin{proof}
	Let $y=\eta-\sigma$ with $\eta\in S_{e_2}$ and $\sigma\in\Sigma_2$.  The circles $\Sp^1_{k_0}$ and $\Sp^1_{k_0}-y$ intersect in at most two points, namely $\sigma=\eta-y$ and $-\eta = -\sigma-y$. (If $\eta = -\sigma$, the intersection reduces to $-\eta$.) Hence, we always have
	\[
	\Sigma_2\cap(S_{e_2}-y)\subseteq  \Sp_{k_0}^{1}\cap(\Sp_{k_0}^1-1) = \{\sigma,-\eta\}.
	\]
	The point $\sigma$ is always contained in $\Sigma_2\cap(S_{e_2}-y)$, while the second point $-\eta$ belongs to $\Sigma_2\cap(S_{e_2}-y)$ if and only if
	\[
	-\eta\in \Sigma_2  \quad \text{and}\quad -\sigma = -\eta+y\in S_{e_2}.
	\]
	We thus  obtain
	\[
	\Sigma_2\cap(S_{e_2}-y) = \begin{cases}
		\{\sigma,-\eta\}\qquad &\text{if } \eta\in(-\Sigma_2)\cap S_{e_2},\ \sigma\in\Sigma_2 \cap S_{-e_2},\\
		\{\sigma\}&\text{otherwise}.
	\end{cases}
	\]
	Using the representation of $F_y$ from \autoref{eq:Fy} completes the proof.
\end{proof}

Let us mention some special cases of sets $F_y$ for $y=\eta-\sigma$ with $\eta\in S_{e_2}$ and $\sigma\in\Sigma_2$:
\begin{itemize}
\item
If $\sigma\in\nu^\perp$ or if $\eta\in\nu^\perp\cap(-\Sigma_2)$ and $\sigma\in S_{-e_2}$, then $y\in F_y$. This means that one equation in \autoref{eq:homFy} is of the form $(1+b(\sigma))g(y)=0$.
\item
If we have $\eta\in\{-\sigma,H_\nu\sigma\}$, then $\eta-H_\nu\sigma=H_\nu\eta-\sigma$. Thus, $F_y$ consists at most of one point. Otherwise, the points $\eta-H_\nu\sigma$ and $H_\nu\eta-\sigma$ are distinct.
\item
If $\eta=\sigma$, that is, $y=0$, the coupling set is given by
		\[
		F_0 = \{2\inner{\sigma}{\nu}\nu \mid\sigma\in\Sigma_{2}\cap S_{e_2}\},
		\] 
		which forms an entire interval on the line spanned by $\nu$.
\end{itemize}
To avoid additional technicalities associated with these degenerate configurations, we will exclude all such points from the analysis of reconstructibility on $\mathcal{Y}_2$ in two dimension. They will nevertheless only influence the reconstructibility on a set of measure zero.

\begin{definition}\label{def:nondegY2}
Let $S\subseteq\R^2$ be the set of all points $\eta-\sigma$ with $\eta\in S_{e_2}$ and $\sigma\in\Sigma_2$ for which $\eta\in\nu^\perp$ or $\sigma\in\nu^\perp$. We call $\mathcal Y_2'\coloneqq\mathcal Y_2\cap\mathcal B^d_{2k_0}\setminus(S\cup\nu^\perp\cup\R\nu)$ the set of non-degenerate points in $\mathcal Y_2$.
\end{definition}

For such points $y\in\mathcal Y_2'$, we can guarantee that the points appearing in \autoref{lem:By2} are all distinct.
\begin{lemma}\label{lem:distPts}
We consider the setting of \autoref{pr:main} for $d=2$. Let $y\in\mathcal Y_2'$ be written in the form $y=\eta-\sigma$ with $\eta\in S_{e_2}$ and $\sigma\in\Sigma_2$.

We then have that all the points $\eta-\sigma$, $H_\nu\eta-\sigma$, $\eta-H_\nu\sigma$, and $H_\nu\eta-H_\nu\sigma$ are distinct and not zero.
\end{lemma}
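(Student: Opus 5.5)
The plan is a direct computation. Each pairwise difference of the four points is a multiple of $\nu$, because $H_\nu$ only changes the $\nu$-component. Each exclusion in \autoref{def:nondegY2} then rules out exactly one way for such a multiple to vanish. Throughout, write $H=H_\nu$ and use $x-Hx=2\inner x\nu\nu$ and $H^2=\mathrm{id}$.

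First I list the six differences:
\begin{align*}
(\eta-\sigma)-(H\eta-\sigma)&=2\inner\eta\nu\nu, & (\eta-\sigma)-(\eta-H\sigma)&=-2\inner\sigma\nu\nu,\\
(\eta-\sigma)-(H\eta-H\sigma)&=2\inner y\nu\nu, & (H\eta-\sigma)-(\eta-H\sigma)&=-2\inner{\eta+\sigma}\nu\nu,\\
(H\eta-\sigma)-(H\eta-H\sigma)&=-2\inner\sigma\nu\nu, & (\eta-H\sigma)-(H\eta-H\sigma)&=2\inner\eta\nu\nu.
\end{align*}
Since $y\notin S$, we have $\eta\notin\nu^\perp$ and $\sigma\notin\nu^\perp$. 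This makes the first, second, fifth and sixth differences nonzero. Since $y\notin\nu^\perp$, the third difference is nonzero as well.

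The remaining case, $\inner{\eta+\sigma}\nu=0$, is the one step that needs a genuinely two-dimensional argument. Because $\norm\eta=\norm\sigma=k_0$, we have $\inner{\eta+\sigma}{\eta-\sigma}=0$, so $\eta+\sigma\perp y$. If in addition $\eta+\sigma\perp\nu$, then in $\R^2$ either $\eta+\sigma=0$ or $y\parallel\nu$.
\begin{itemize}
\item The case $\eta+\sigma=0$ gives $y=2\eta$, hence $\norm y=2k_0$. This contradicts $y\in\mathcal B^2_{2k_0}$.
\item The case $y\parallel\nu$ means $y\in\R\nu$, which is excluded by \autoref{def:nondegY2}.
\end{itemize}
Hence all four points are pairwise distinct.

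It remains to check that none of the points vanishes.
\begin{itemize}
\item $\eta-\sigma=y\neq0$, since $0\in\R\nu$ is excluded.
\item $H\eta-H\sigma=Hy\neq0$, since $H$ is injective.
\item If $H\eta-\sigma=0$, then $\sigma=H\eta$. In that case $\eta+\sigma=\eta+H\eta=2\pi_\nu\eta\in\nu^\perp$, which is the case already excluded above.
\item If $\eta-H\sigma=0$, then $\eta=H\sigma$, so $\eta+\sigma=\sigma+H\sigma\in\nu^\perp$. This is excluded in the same way.
\end{itemize}

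The only non-routine step is the one using $\norm\eta=\norm\sigma$ together with $d=2$. It explains why both the ball $\mathcal B^2_{2k_0}$ and the line $\R\nu$ are removed in \autoref{def:nondegY2}.
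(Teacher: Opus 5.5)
Your proof is correct and takes essentially the same route as the paper: every coincidence or vanishing reduces to $\eta\in\nu^\perp$, $\sigma\in\nu^\perp$, $y\in\nu^\perp$, or $\inner{\eta+\sigma}{\nu}=0$, and in two dimensions the last case forces either $\eta=-\sigma$ (so $\norm y=2k_0$) or $y\in\R\nu$. The only differences are cosmetic. The paper gets $\eta\in\{-\sigma,H_\nu\sigma\}$ directly from the circle and treats the zero cases via $\eta\in\{\sigma,H_\nu\sigma\}$, whereas you use $\eta+\sigma\perp y$ and fold the zero cases into the $\inner{\eta+\sigma}{\nu}=0$ case.
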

\begin{proof}
For one of the points to be zero, we would need that $\eta\in\{\sigma,H_\nu\sigma\}$, which would imply that $y=\eta-\sigma=0$ or $y=\eta-H_\nu\sigma+2\inner\sigma\nu\nu=2\inner\sigma\nu\nu\in\R\nu$.

For two points which differ by an application of $H_\nu$ to one term to be equal, we would need that $\eta=H_\nu\eta$ or $\sigma=H_\nu\sigma$, which would require $\eta\in\nu^\perp$ or $\sigma\in\nu^\perp$.

For $H_\nu\eta-\sigma=\eta-H_\nu\sigma$, we would need that $\eta\in\{-\sigma,H_\nu\sigma\}$, which would imply $y=2\eta\in\Sp^{d-1}_{2k_0}$ or, as before, $y\in\R\nu$.

Finally, for $\eta-\sigma=H_\nu(\eta-\sigma)$, we would require $y=\eta-\sigma\in\nu^\perp$.
\end{proof}

With the characterization of the neighbors of a vertex $y\in\mathcal{Y}_2$ presented in \autoref{lem:By2}, we can now analyze how the connected component of an arbitrary non-degenerate vertex looks like.

\begin{lemma}\label{lem:Zhk}
We consider the setting of \autoref{pr:main} for $d=2$. Each connected component $(C,E_C)$ of the graph $(\mathcal Y_2,E)$ associated to the equation system defined in \autoref{eq:homFy} containing a vertex from $\mathcal Y_2'$ has exactly one of the following forms:
	\begin{enumerate}
		\item
		\parbox[t]{0.3\textwidth}{\mbox{}\\[-2ex]\includegraphics{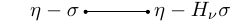}}
		for some $\eta\in S_{e_2}$ and $\sigma\in\Sigma_2$ with $\eta\notin(-\Sigma_2)$ or $\sigma,H_\nu\sigma\notin S_{-e_2}$;
		\item
		\parbox[t]{0.3\textwidth}{\mbox{}\\[-2ex]\includegraphics{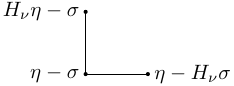}}
		\parbox[t]{0.5\textwidth}{for some $\eta\in(-\Sigma_2)\cap S_{e_2}$ with $H_\nu\eta\notin S_{e_2}$\\and  $\sigma\in\Sigma_2\cap S_{-e_2}$ with $H_\nu\sigma\notin S_{-e_2}$;}
		\item
		\parbox[t]{0.3\textwidth}{\mbox{}\\[-2ex]\includegraphics{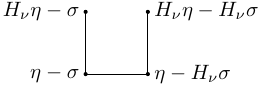}}
		\parbox[t]{0.5\textwidth}{for some $\eta\in(-\Sigma_2)\cap S_{e_2}$ with $H_\nu\eta\notin S_{e_2}$\\and $\sigma\in\Sigma_2\cap S_{-e_2}$ with $H_\nu\sigma\in S_{-e_2}$;}
		\item
		\parbox[t]{0.3\textwidth}{\mbox{}\\[-2ex]\includegraphics{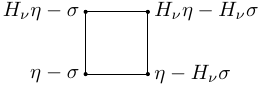}}
		\parbox[t]{0.5\textwidth}{for some $\eta\in(-\Sigma_2)\cap S_{e_2}$ with $H_\nu\eta\in S_{e_2}$\\ and $\sigma\in\Sigma_2\cap S_{-e_2}$ with $H_\nu\sigma\in S_{-e_2}$.}
		
	\end{enumerate}
\end{lemma}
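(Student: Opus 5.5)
The plan is to reduce everything to \autoref{lem:By2} and to show that the connected component of a non-degenerate vertex lies inside a set of at most four points, obtained by reflecting the two constituents of the vertex. Fix $y\in\mathcal Y_2'$ in the component and write $y=\eta-\sigma$ with $\eta\in S_{e_2}$ and $\sigma\in\Sigma_2$. Since $H_\nu$ is an involution, the definition in \autoref{def:Sigma} shows that $\Sigma_2$ is invariant under $H_\nu$, so $H_\nu\sigma\in\Sigma_2$ as well. Consider the four points
\[
y=\eta-\sigma,\quad z_1\coloneqq\eta-H_\nu\sigma,\quad z_2\coloneqq H_\nu\eta-\sigma,\quad w\coloneqq H_\nu\eta-H_\nu\sigma .
\]
By \autoref{lem:distPts} they are pairwise distinct and non-zero. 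This keeps the graph simple and lets us apply \autoref{lem:By2}, which needs $\eta'\neq\sigma'$, to each of them.

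Introduce the four conditions
\begin{align*}
(A)&\colon\ \eta\in-\Sigma_2, & (B)&\colon\ \sigma\in S_{-e_2},\\
(C)&\colon\ H_\nu\sigma\in S_{-e_2}, & (D)&\colon\ H_\nu\eta\in S_{e_2}.
\end{align*}
Since $F_y$ is defined intrinsically by $y$, the set of neighbours given by \autoref{lem:By2} does not depend on which admissible representation $y=\eta'-\sigma'$ we choose. We only need one admissible representation of each vertex.

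First I will compute all edges among the four points.
\begin{itemize}
\item Applying \autoref{lem:By2} to $y=\eta-\sigma$ gives the edge $y$--$z_1$ always, and the edge $y$--$z_2$ if and only if $A\wedge B$.
\item The representation $z_1=\eta-H_\nu\sigma$ is admissible. It gives back $y$, and in addition $w$ if and only if $A\wedge C$.
\item For $z_2$ we need a representation. If $D$ holds, $z_2=H_\nu\eta-\sigma$ is admissible and yields the neighbour $w$. If $A\wedge B$ holds, the representation $z_2=(-\sigma)-(-H_\nu\eta)$ is admissible, because $-H_\nu\eta\in\Sigma_2$ by the $H_\nu$-invariance of $\Sigma_2$. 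This representation yields $y$, and it yields $w$ if and only if $-H_\nu\eta\in S_{-e_2}$, i.e.\ if and only if $D$ holds.
\item For $w$ one argues symmetrically.
\end{itemize}
In every case the neighbours of each of the four points again lie in $\{y,z_1,z_2,w\}$. Hence the component is contained in this set, and its edges are determined by the Boolean pattern of $A,B,C,D$.

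Next comes the case distinction. The choice of $\sigma$ versus $H_\nu\sigma$, and of $\eta$ versus $H_\nu\eta$, in the labelling is free, so we may relabel to normalise.
\begin{enumerate}
\item If $A$ fails, or both $B$ and $C$ fail, then neither $y$ nor $z_1$ has a second neighbour. The component is the single edge $\{y,z_1\}$, which is form~1.
\item If $A\wedge B$ holds, $C$ fails and $D$ fails, we obtain the path $z_1$--$y$--$z_2$. Here $w$ is not reached, because $z_2$ admits only the second representation, which does not produce $w$ when $D$ fails. This is form~2.
\item If $A\wedge B\wedge C$ holds and $D$ fails, we obtain the path $z_2$--$y$--$z_1$--$w$, which is form~3.
\item If $A\wedge B\wedge C\wedge D$ holds, we obtain the $4$-cycle, which is form~4.
\end{enumerate}
The remaining patterns reduce to these by relabelling. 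For instance, $A\wedge C\wedge\neg B$ becomes $A\wedge B$ after swapping $\sigma\leftrightarrow H_\nu\sigma$. Similarly, if $D$ holds while $A\wedge B$ fails, then $z_2$ lies in a different component, and we are back in form~1.

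The step I expect to be the main obstacle is showing that no further vertices can occur. For this I have to check that every admissible representation of each of the four points reduces, up to relabelling, to one of those already used. This follows from the proof of \autoref{lem:By2}: a given point admits at most the two representations coming from the two intersection points of the circles. I also have to check that the ``exactly one'' clause matches the stated conditions, i.e.\ that the four patterns are mutually exclusive and that relabelling does not change the isomorphism type of the component.
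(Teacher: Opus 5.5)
Your reduction is the same as the paper's. Via \autoref{lem:By2} and \autoref{lem:distPts} the component lies in $\{y,z_1,z_2,w\}$. There, $y$--$z_1$ is always an edge, $y$--$z_2$ is an edge iff $A\wedge B$, $z_1$--$w$ iff $A\wedge C$, and $z_2$--$w$ iff $D$.

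The gap is in your final case distinction. The pattern $A\wedge B\wedge\neg C\wedge D$ is not among your cases 1--4, and the relabellings you allow cannot bring it there:
\begin{itemize}
\item the swap $\sigma\leftrightarrow H_\nu\sigma$ only exchanges $B$ and $C$;
\item the swap $\eta\leftrightarrow H_\nu\eta$ is admissible only under $D$ and leaves $D$ true.
\end{itemize}
So this pattern only ever turns into $A\wedge\neg B\wedge C\wedge D$, and neither pattern satisfies any of the four listed conditions.

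Your sentence ``if $D$ holds while $A\wedge B$ fails, then $z_2$ lies in a different component, and we are back in form~1'' is wrong exactly here. Under $A\wedge\neg B\wedge C\wedge D$ you get the edges $y$--$z_1$, $z_1$--$w$ (from $A\wedge C$) and $w$--$z_2$ (from the admissible representation $z_2=H_\nu\eta-\sigma$). These form a path on four vertices, not form~1.

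The case really occurs for non-degenerate points. Take $k_0=1$, $\nu=(\cos\frac\pi8,\sin\frac\pi8)$ and $\omega=(\sin\frac\pi8,-\cos\frac\pi8)$, so that $\Sigma_2$ is the arc of angles in $(\frac{9\pi}8,\frac{17\pi}8)$. With $\eta=e_2$ and $\sigma=(\cos\frac{7\pi}6,\sin\frac{7\pi}6)$ one has $A\wedge B\wedge\neg C\wedge D$ and $\eta-\sigma\in\mathcal Y_2'$. The component of $\eta-\sigma$ is the path $z_1$--$y$--$z_2$--$w$.

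The missing ingredient is to use the second representation of the reference vertex itself. If $A\wedge B$ holds, then $y=(-\sigma)-(-\eta)$ with $-\sigma\in S_{e_2}$ and $-\eta\in\Sigma_2$. Replacing $(\eta,\sigma)$ by $(-\sigma,-\eta)$ keeps $A$ and $B$ and exchanges $C\leftrightarrow D$. It also swaps $z_1\leftrightarrow z_2$ while fixing $y$ and $w$. This maps $A\wedge B\wedge\neg C\wedge D$ to $A\wedge B\wedge C\wedge\neg D$, which is form~3. The pattern $A\wedge\neg B\wedge C\wedge D$ gets there after first swapping $\sigma\leftrightarrow H_\nu\sigma$.

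The paper's proof leaves this step implicit in picking the reference point ``as shown in the graphs''. With this extra relabelling your enumeration becomes complete. The ``exactly one'' part then needs no separate check, since the four forms have different numbers of vertices and edges.
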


\begin{proof}
	Let $(C,E_C)$ be a connected component of the graph $(\mathcal Y_2,E)$ containing a non-degenerate point $y\in C\cap\mathcal Y_2'$, which we write in the form $y=\eta-\sigma$ with $\eta\in S_{e_2}$ and $\sigma\in\Sigma_2$.
	
	According to \autoref{lem:By2}, the set $F_y$ of neighboring points of $y$ fulfills $F_y\subseteq\{z_1,z_2\}$ where $z_1\coloneqq\eta-H_\nu\sigma$ and $z_2\coloneqq H_\nu\eta-\sigma$ are non-zero according to \autoref{lem:distPts}. We analogously have that $F_{z_1}\subseteq\{y,z_3\}$. If $z_2\in\mathcal Y_2$, then also $F_{z_2}\subseteq\{y,z_3\}$, where $z_3\coloneqq H_\nu\eta-H_\nu\sigma$ is again non-zero. Thus, the second neighborhood of $y$ is a subset of $N_y\coloneqq\{y,z_1,z_2,z_3\}$. Moreover, since we find in the same way that $F_{z_3}\subseteq\{z_1,z_2\}$ if $z_3\in\mathcal Y_2$, we see that $C\subseteq N_y$ and $E_C\subseteq\{\{y,z_1\},\{y,z_2\},\{z_1,z_3\},\{z_2,z_3\}\}$.
	
	Since there are no isolated points in the graph $(\mathcal Y_2,E)$ and we know from \autoref{lem:distPts} that all the four points $y$, $z_1$, $z_2$, and $z_3$ are distinct, this leaves exactly the four possible connected components listed in the proposition.
	To characterize which vertices correspond to which of these cases, we pick the reference point $\eta-\sigma$ with $\eta\in S_{e_2}$ and $\sigma\in\Sigma_2$ as shown in the graphs and  check for each edge under which condition it exists. 
	\begin{itemize}
		\item
		By applying \autoref{lem:By2} to $y=\eta-\sigma$, we see that the edge $\{y,z_1\}$ always exists and
		\item
		that the edge $\{y,z_2\}$ exists if and only if $\eta\in-\Sigma_2$ and $\sigma\in S_{-e_2}$.
		\item
		Applying \autoref{lem:By2} to $z_1=\eta-H_\nu\sigma$, we find that the edge $\{z_1,z_3\}$ exists if and only if $\eta\in-\Sigma_2$ and $H_\nu\sigma\in S_{-e_2}$.
		\item Finally, assuming that the edge $\{y,z_2\}$ exists, we get by applying \autoref{lem:By2} to $z_2=-\sigma+H_\nu\eta$ that the edge $\{z_2,z_3\}$ exists if and only if $H_\nu\eta\in S_{e_2}$. 
		
	\end{itemize}
Putting this together, we get the asserted conditions.
\end{proof}

Thus, the graph associated with \autoref{eq:homFy} decomposes into connected components, and the equation system can be studied separately on each of them.
If, on a given component, the number of vertices exceeds the number of edges (as in the first three cases in \autoref{lem:Zhk}), then the induced linear system for the values of $g$ on this component is underdetermined. Consequently, uniqueness cannot be expected unless the value at one of the vertices is already known to be zero from the first part of \autoref{eq:sys_eq}. 
If the numbers coincide as in the last case in \autoref{lem:Zhk}, the component gives rise to a linear system with equally many equations as unknowns which could potentially be uniquely solvable.

We discuss both possibilities further in \autoref{sec:two}.

\subsection{Properties of the coupling sets in higher dimensions}
For $d > 2$, the coupling sets $F_y$ can contain a continuum of points. First, we show that each non-empty $F_y$ is forms a relatively open subset of a closed line segment $L_y$ on a line parallel to $\nu$.

\begin{lemma}\label{lem:Ly}
We consider the setting of \autoref{pr:main} for $d>2$. For every point $y\in\R^d$, the coupling set $F_y$ is a relatively open subset of the closed line segment
	\begin{equation}\label{eq:Ly}
	L_y \coloneqq\left\lbrace y+2\inner{\sigma}{\nu}\nu\mid \sigma\in \Sp^{d-1}_{k_0}\cap(\Sp^{d-1}_{k_0}-y)\right\rbrace  = \left\lbrace y+2\lambda\nu\mid \lambda\in\Lambda_y\right\rbrace ,
	\end{equation}
	with the closed interval
	\begin{equation}\label{eq:Lambday}
	\Lambda_y\coloneqq \left\lbrace \inner{\sigma}{\nu}\mid \sigma\in \Sp^{d-1}_{k_0}\cap(\Sp^{d-1}_{k_0}-y)\right\rbrace.
	\end{equation}
\end{lemma}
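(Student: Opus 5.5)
The plan is to start from the description of $F_y$ in \autoref{lem:Fy}, namely $F_y=\{y+2\inner\sigma\nu\nu\mid\sigma\in\Sigma_2\cap(S_{e_d}-y)\}$. The set $\Sigma_2\cap(S_{e_d}-y)$ lies in $K_y\coloneqq\Sp^{d-1}_{k_0}\cap(\Sp^{d-1}_{k_0}-y)$, and it is relatively open there, because $\Sigma_2$ and $S_{e_d}$ are relatively open in their spheres. This follows from the definitions: $\Sigma_2=\Sp^{d-1}_{k_0}\cap\Omega_\omega\cap H_\nu\Omega_\omega$, and $S_{e_d}-y=(\Sp^{d-1}_{k_0}-y)\cap(\Omega_{e_d}-y)$, where $\Omega_\omega$, $H_\nu\Omega_\omega$ and $\Omega_{e_d}-y$ are open half-spaces. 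Hence $\Sigma_2\cap(S_{e_d}-y)=K_y\cap U$ with $U\coloneqq\Omega_\omega\cap H_\nu\Omega_\omega\cap(\Omega_{e_d}-y)$ open. It therefore suffices to show three things: $\Lambda_y$ is a closed interval, $L_y$ is the corresponding segment, and the image of a relatively open subset of $K_y$ under $p\colon\sigma\mapsto\inner\sigma\nu$ is relatively open in $\Lambda_y$.

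Next we describe $K_y$. For $y=0$ it is the whole sphere and $\Lambda_0=[-k_0,k_0]$. For $0<\norm y\le 2k_0$, subtracting the equations $\norm\sigma^2=k_0^2$ and $\norm{\sigma+y}^2=k_0^2$ gives $\inner\sigma y=-\frac12\norm y^2$. So $K_y$ is the sphere of dimension $d-2$ with center $-y/2$ and radius $\rho=\sqrt{k_0^2-\norm y^2/4}$ inside the hyperplane $y^\perp-y/2$; it degenerates to a single point when $\rho=0$. For $\norm y>2k_0$ it is empty, and then $F_y=\emptyset$ and the claim is trivial. Since $d>2$ we have $d-2\ge1$, so $K_y$ is compact and connected. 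The continuous map $p$ therefore sends it onto a compact interval, which shows that $\Lambda_y$ is a closed interval; explicitly $\Lambda_y=[-\inner{y}{\nu}/2-\rho\norm{\pi_y\nu},\,-\inner{y}{\nu}/2+\rho\norm{\pi_y\nu}]$. The second expression for $L_y$ in \autoref{eq:Ly} then follows directly from the definitions.

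The main step is openness of $p|_{K_y}$ onto $\Lambda_y$. If $\pi_y\nu=0$, or if $\rho=0$, then $\Lambda_y$ is a single point and every subset of it is relatively open. Otherwise we parametrise $K_y$ as $-y/2+\rho\,\theta$ with $\theta$ in the unit sphere of $y^\perp$, so that $p$ becomes, up to an affine change of variable, the height function $\theta\mapsto\inner{\theta}{u}$ with $u\coloneqq\pi_y\nu/\norm{\pi_y\nu}$. A height function on a sphere of dimension at least one is an open map onto $[-1,1]$, including at the endpoints $\pm1$. At an interior value this holds because the map is a submersion there. At the poles $\theta=\pm u$ we argue directly: any neighborhood of a pole contains points at every height in $(1-\epsilon,1]$ (respectively $[-1,-1+\epsilon)$), since we can move along a great circle through the pole, which exists because the sphere has dimension $\ge 1$.

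I expect the endpoint (pole) case to be the main obstacle, since submersion arguments do not apply there. It is handled by the explicit great-circle argument above, and once the pole case is settled, $F_y$ is relatively open in $L_y$.
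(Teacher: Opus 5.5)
Your proof is correct and follows the same route as the paper: $F_y$ is the image under $\sigma\mapsto y+2\inner{\sigma}{\nu}\nu$ of the relatively open subset $\Sigma_2\cap(S_{e_d}-y)$ of $K_y=\Sp^{d-1}_{k_0}\cap(\Sp^{d-1}_{k_0}-y)$, so everything reduces to $\sigma\mapsto\inner{\sigma}{\nu}$ being a relatively open map from $K_y$ onto $\Lambda_y$. The paper compresses this step into the identity $(\lambda_0-\rho,\lambda_0+\rho)\cap\Lambda_y=\{\inner{\tilde\sigma}{\nu}\mid\tilde\sigma\in\mathcal B^d_\rho(\sigma)\cap C\}$, which is not literally true with the same radius $\rho$ (the right-hand side is in general a strictly shorter interval), and it takes for granted that $\Lambda_y$ is an interval; your connectedness argument and your submersion-plus-pole argument supply exactly these missing justifications.
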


\begin{proof}
	It is clear from the explicit representation in \autoref{eq:Fy} that $F_y$ is a subset of $L_y$.
	
	For every point $z\in F_y$, we then find a point $\sigma\in \Sigma_2\cap(S_{e_d}-y)$ with
	\[
	z = y+2\lambda_0\nu, \quad \lambda_0\coloneqq\langle\sigma,\nu\rangle.
	\]
	Since the set 
	\[ \Sigma_2\cap(S_{e_d}-y)= \mathbb{S}^{d-1}_{k_0} \cap (\mathbb{S}^{d-1}_{k_0}-y)\cap \Omega_\omega\cap \Omega_{H_\nu\omega}\cap (\Omega_{e_d}-y)\]
	is relatively open in the circle $C\coloneqq \Sp^{d-1}_{k_0}\cap(\Sp^{d-1}_{k_0}-y)$, we find a ball $\mathcal{B}^{d}_\rho(\sigma)$ with  $\mathcal{B}^{d}_\rho(\sigma)\cap C\subseteq\Sigma_2\cap (S_{e_d}-y)$. We thus have
	\begin{align*}
		L_y\cap\{y+2\lambda\nu \mid \lambda\in(\lambda_0-\rho,\lambda_0+\rho)\} &= \{y+2\lambda\nu \mid \lambda\in(\lambda_0-\rho,\lambda_0+\rho)\cap \Lambda_y\}\\ &= \{y+2\langle\tilde{\sigma},\nu\rangle\nu \mid \tilde{\sigma}\in \mathcal{B}^{d}_\rho(\sigma)\cap C\}\subseteq F_y,
	\end{align*}
	showing that $z$ is an inner point of $F_y\subseteq L_y$.
\end{proof}

Next, we investigate how the coupling sets of two base points behave when the points are sufficiently close to each other.

\begin{lemma}\label{lem:openBy}
	We consider the setting of \autoref{pr:main} for $d>2$. Let $y\in \mathcal B^d_{2k_0}$ be a point for which the coupling set $F_y$ is not empty.

	Then, there exists a point $\tilde{y}\in\mathcal B^d_{2k_0}\cap(y+\R\nu)$ such that $F_y\cap F_{\tilde{y}}$
 contains a non-empty open line segment on $y+\R\nu$.
\end{lemma}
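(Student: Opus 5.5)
The plan is to exploit Lemma~\ref{lem:Ly}: both $F_y$ and $F_{\tilde y}$ lie on the same line $y+\R\nu$ whenever $\tilde y\in y+\R\nu$. So it suffices to show two things: first, that $F_y$ contains a non-degenerate open segment; second, that this segment (or a sub-segment of it) persists in $F_{\tilde y}$ for $\tilde y=y+t\nu$ with $t\neq0$ small. Then $F_y\cap F_{\tilde y}$ contains a common open segment. Taking $\tilde y=y$ would make the statement trivial once the first point is established, so I aim for a genuine $\tilde y\ne y$; this is presumably how the lemma will be used later.

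\emph{Step 1 (non-degeneracy of $F_y$).} Since $0<\norm y<2k_0$, the set $C=\Sp^{d-1}_{k_0}\cap(\Sp^{d-1}_{k_0}-y)$ is a $(d-2)$-sphere of positive radius, centred at $-y/2$, lying in the hyperplane orthogonal to $y$. For $y=0$ the set $C$ is the whole sphere, and this case is handled analogously. The linear map $\sigma\mapsto\inner\sigma\nu$ restricted to $C$ is non-constant unless $\nu\parallel y$. Hence, for $y\notin\R\nu$, $\Lambda_y$ is an interval of positive length. Moreover, every point $\sigma\in C$ with $\pi\nu\ne0$ (where $\pi$ is the projection onto the tangent space of $C$ at $\sigma$) is a regular point of this map. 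Since $\Sigma_2\cap(S_{e_d}-y)$ is a non-empty relatively open subset of $C$, it contains a regular point $\sigma_0$: the critical points on $C$ are only the two extremal points. The image of a neighbourhood of $\sigma_0$ is then an open interval around $\lambda_0=\inner{\sigma_0}\nu$. This shows that $F_y\supseteq\{y+2\lambda\nu\mid\lambda\in(\lambda_0-\varepsilon,\lambda_0+\varepsilon)\}$.

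\emph{Step 2 (persistence under shifting).} A point $z=y+2\lambda\nu$ lies in $F_{\tilde y}$ for $\tilde y=y+t\nu$ iff there is $\sigma'\in\Sigma_2\cap(S_{e_d}-\tilde y)$ with $\inner{\sigma'}\nu=\lambda-t/2$. Consider the system
\[
\norm{\sigma'}^2=k_0^2,\qquad \norm{\sigma'+y+t\nu}^2=k_0^2,\qquad \inner{\sigma'}\nu=\lambda-\tfrac t2
\]
in the unknown $\sigma'$, depending on the parameters $(t,\lambda)$. At $(t,\lambda,\sigma')=(0,\lambda_0,\sigma_0)$ the gradients $\sigma_0$, $\sigma_0+y$, $\nu$ are linearly independent; this is exactly the regularity from Step 1 together with $y\ne0$. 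The implicit function theorem then gives solutions $\sigma'(t,\lambda)$ depending continuously on $(t,\lambda)$ near $(0,\lambda_0)$. The conditions $\sigma'\in\Omega_\omega\cap\Omega_{H_\nu\omega}$ and $\sigma'+\tilde y\in\Omega_{e_d}$ are open, so they persist for small $|t|$ and $|\lambda-\lambda_0|$. Choosing $0<|t|$ small, we obtain that the segment $\{y+2\lambda\nu\mid|\lambda-\lambda_0|<\varepsilon'\}$ lies in $F_y\cap F_{\tilde y}$. Finally, $\tilde y\in\mathcal B^d_{2k_0}$ for small $t$ because the ball is open.

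The main obstacle is the degenerate configuration $y\in\R\nu\setminus\{0\}$, where $C$ lies in a hyperplane orthogonal to $\nu$. In that case the linear map is constant on $C$, and a direct computation gives $F_y\subseteq\{0\}$, so no open segment can exist. I would therefore first check how the statement is meant to treat such $y$: presumably they are excluded, being a null set on which $y\in\mathcal B^d_{2k_0}$ with $F_y\neq\emptyset$ is degenerate. I would add this exclusion explicitly, and run Steps 1--2 for all remaining $y$, including $y=0$, where $C$ is the full sphere and the same argument applies.
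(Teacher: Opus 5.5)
Your proof is correct for the statement once it is repaired as you suggest, and the repair is genuinely needed, because the lemma is false as written. For $y=s\nu$ with $0<|s|<2k_0$, every $\sigma\in\Sp^{d-1}_{k_0}\cap(\Sp^{d-1}_{k_0}-y)$ satisfies $\inner{\sigma}{\nu}=-s/2$. Hence $\Lambda_y$ is a single point and $F_y\subseteq\{0\}$. Such points do lie in $\mathcal Y_2$: any $\sigma\in\Sigma_2\setminus\nu^\perp$ with $H_\nu\sigma\in S_{e_d}$ gives $y=H_\nu\sigma-\sigma=-2\inner{\sigma}{\nu}\nu$. The paper's proof passes over this by asserting that $\mathring L_y\neq\emptyset$ follows from $y\in\mathcal B^d_{2k_0}$, which fails precisely on $\R\nu\setminus\{0\}$. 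Its transversality claim also fails at $y=0$, where the two spheres coincide. The exclusion costs nothing downstream. \autoref{lem:pointsonline} assumes $\eta,\sigma,\nu$ linearly independent, which, since $\eta=y+\sigma$, is exactly your non-degeneracy condition that $\sigma_0,y,\nu$ be independent.

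Your Step 1 is the paper's observation that $F_y$ is relatively open in $L_y$ with non-empty interior, made precise through regular points of $\sigma\mapsto\inner{\sigma}{\nu}$ on $C$. Step 2 takes a different route. The paper shifts $y$ along $\nu$ and uses continuous dependence of $\Sigma_2\cap(S_{e_d}-y)$ on $y$ to find a nearby $\tilde\sigma$. This yields a single common point $\tilde z\in F_y\cap F_{\tilde y}$, which the paper then enlarges to a segment via relative openness of $F_y$ and $F_{\tilde y}$ in $L_y$ and $L_{\tilde y}$. You instead apply the implicit function theorem to the system in $\sigma'$ with parameters $(t,\lambda)$ and get the whole common segment at once. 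This buys several things:
\begin{itemize}
\item it is more explicit than the paper's appeal to a distance between sets;
\item it isolates exactly where non-degeneracy enters;
\item it is the same mechanism the paper later uses in \autoref{lem:pointsonline};
\item it produces a genuine $\tilde y\neq y$, which is what \autoref{subsec:3d} actually needs.
\end{itemize}
As stated, the lemma is trivially satisfied by $\tilde y=y$ once Step 1 holds, and the paper's proof does not enforce $\tilde y\neq y$ either.

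One correction: your closing claim that Steps 1--2 also run for $y=0$ is wrong for Step 2. At $y=0$ the gradients $2\sigma_0$ and $2(\sigma_0+y)$ coincide. This is not merely technical: every $\tilde y=t\nu$ with $t\neq0$ lies in $\R\nu\setminus\{0\}$, so $F_{\tilde y}\subseteq\{0\}$, and no $\tilde y\neq0$ can work. For $y=0$ you must take $\tilde y=y$, which the statement permits. Step 1 then finishes the argument, with $C$ the full sphere and critical points $\pm k_0\nu$. So the lemma holds for $y\in\mathcal B^d_{2k_0}\setminus(\R\nu\setminus\{0\})$, and your strengthened version with $\tilde y\neq y$ holds precisely for the points with $y\notin\R\nu$.
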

\begin{proof}
Let us introduce the sets $L_y$ and $\Lambda_y$ as in \autoref{lem:Ly} and define the interior $\mathring{L}_y\coloneqq\{y+2\lambda\nu\mid\lambda\in\mathring\Lambda_y\}$ of $L_y$ in $y+\R\nu$, which is not empty because of the assumption $y\in \mathcal B^d_{2k_0}$.

Since $F_y$ is according to \autoref{lem:Ly} a relatively open subset of the line segment $L_y$, we can find a point $z\in F_y\cap\mathring{L}_y$. For this point, we then have a $\sigma\in\Sigma_2\cap(S_{e_d}-y)$ with
	\[z = y +2\lambda\nu, \qquad \lambda \coloneqq \inner{\sigma}{\nu}\in\mathring{\Lambda}_y. \] 
Furthermore, the openness of $F_y$ allows us to find a parameter $\varepsilon>0$ with $y+2\tilde{\lambda}\nu\in F_y$ for all $\tilde \lambda\in(\lambda-\varepsilon,\lambda+\varepsilon)$.
	
Since the spheres $\Sp^{d-1}_{k_0}$ and $\Sp^{d-1}_{k_0}-y$ intersect transversally for $y\in \mathcal{B}^{d}_{2k_0}$, the intersection depends continuously on $y$. That is, we can choose $\tilde{y}\in\mathcal{B}^{d}_{\varepsilon}(y)\cap(y+\R\nu)$ close to $y$ with $\mathring{L}_y\cap\mathring{L}_{\tilde{y}}\neq\emptyset$ such that
	\[
	\operatorname{dist}(\Sigma_2\cap(S_{e_d}-y), \Sigma_2\cap(S_{e_d}-\tilde y))<\frac{\varepsilon}{2}.
	\]
In particular, we can find a point $\tilde{\sigma}\in \Sigma_2\cap(S_{e_d}-\tilde y)$ with $\norm{\sigma-\tilde{\sigma}}<\frac{\varepsilon}{2}$. Defining then the point
	\[
	\tilde{z}\coloneqq\tilde{y}+2\inner{\tilde{\sigma}}{\nu}\nu = z+(\tilde{y}-y)+2\inner{\tilde{\sigma}-\sigma}{\nu}\nu\in F_{\tilde{y}},
	\]
and remarking that $\frac12(\tilde{y}-y)+\inner{\tilde{\sigma}-\sigma}{\nu}\nu = \hat\lambda\nu$ for some $\hat\lambda\in(-\varepsilon,\varepsilon)$, we find that $\tilde z = y+2\tilde{\lambda}\nu\in F_y\cap F_{\tilde{y}}$ with $\tilde{\lambda}\coloneqq\lambda+\hat\lambda\in (\lambda-\varepsilon,\lambda+\varepsilon)$.

Thus, $F_y\cap F_{\tilde y}$ is a non-empty, relatively open subset of the non-degenerate line segment $L_y\cap L_{\tilde y}$.
\end{proof}

\section{Reconstructible Fourier coefficients in dimensions larger than three}\label{sec:higher}
In this section, we study the \autoref{pr:main} in the case $d>3$, exploiting the additional structure provided by the sets $C_{y,\lambda}$ introduced in \autoref{def:C}. Recall that for fixed $y\in\R^d$ and $z\in F_y$ the \autoref{eq:homFy} yields the family
\[
0 = b(\sigma) g(y)+g(z)\qquad \text{for all } \sigma\in C_{y,\frac{1}{2}\inner{z-y}{\nu}}
\]
of equations. If at least two of these equations are linearly independent, then $g(y)=g(z)=0$.

\begin{proposition}\label{prop:uniquel3}
We consider the setting of \autoref{pr:main} for $d>3$. Let $y\in\mathcal Y_2$ and suppose that $g$ solves the homogeneous system from \autoref{eq:sys_eq}. 

	If there exists a value $\lambda\in \R$ such that the function $b$ is not constant on the set $C_{y,\lambda}$, introduced in \autoref{def:C}, then $g(y)=0$.
\end{proposition}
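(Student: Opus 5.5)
The plan is to take two points of $C_{y,\lambda}$ with different values of $b$ and subtract the two equations they give. Both equations involve the same pair of unknowns $g(y)$ and $g(z)$, so the difference isolates $g(y)$.

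\emph{Step 1: fix the coupled point.} By assumption, $b$ is not constant on $C_{y,\lambda}$. In particular $C_{y,\lambda}\neq\emptyset$, so we can pick $\sigma_1,\sigma_2\in C_{y,\lambda}$ with $b(\sigma_1)\neq b(\sigma_2)$. Every $\sigma\in C_{y,\lambda}$ satisfies $\sigma\in\Sigma_2\cap(S_{e_d}-y)$ and $\inner{\sigma}{\nu}=\lambda$. By \autoref{lem:Fy}, the point
\[
z\coloneqq y+2\lambda\nu
\]
therefore belongs to $F_y$, and it is the same for $\sigma_1$ and $\sigma_2$, since $z$ depends only on $\inner{\sigma}{\nu}=\lambda$. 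We also have $\frac12\inner{z-y}{\nu}=\lambda$, because $\norm\nu=1$ (the statement lists $\nu\in\Sp^{d-1}_{k_0}$, but $\nu$ is used as a unit normal throughout, e.g.\ in $H_\nu$). Hence $\sigma_1,\sigma_2\in C_{y,\frac12\inner{z-y}{\nu}}$.

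\emph{Step 2: write down the two equations.} Applying \autoref{eq:homFy} with $\sigma=\sigma_1$ and with $\sigma=\sigma_2$ gives
\[
0=b(\sigma_1)g(y)+g(z),\qquad 0=b(\sigma_2)g(y)+g(z).
\]
To see this directly from \autoref{eq:homY2}, set $\eta_j\coloneqq\sigma_j+y\in S_{e_d}$. Then $\eta_j-\sigma_j=y$ and $\eta_j-H_\nu\sigma_j=y+2\inner{\sigma_j}{\nu}\nu=z$.

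\emph{Step 3: subtract.} Subtracting the two equations yields $(b(\sigma_1)-b(\sigma_2))g(y)=0$. Since $b(\sigma_1)\neq b(\sigma_2)$, this gives $g(y)=0$, and then also $g(z)=0$.

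No step here is a real obstacle; the only point needing care is the bookkeeping in Step 1, namely that the definition of $C_{y,\lambda}$ matches the index $\frac12\inner{z-y}{\nu}$ used in \autoref{eq:homFy}. The hypothesis $d>3$ plays no role in this argument. It only matters later, to ensure that $C_{y,\lambda}$ is typically infinite, so that non-constancy of $b$ on it is a reasonable assumption.
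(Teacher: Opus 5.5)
Your proposal is correct and follows the paper's own proof. The paper phrases the same step as a $2\times2$ system for $(g(y),g(z))$ whose determinant is $b(\sigma)-b(\hat\sigma)\neq0$, which is equivalent to your subtraction.
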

\begin{proof}
	Since $d>3$ and, by assumption, $b$ is not constant on $C_{y,\lambda}$, there exist two distinct vectors $\sigma,\hat\sigma\in C_{y,\lambda}$ with $b(\sigma)\neq b(\hat\sigma)$. Then, the pair $(g(y),g(z))$ with $z\coloneqq y+2\lambda\nu$  satisfies the system 
	\begin{align*}
		B\begin{pmatrix}
			g(y)\\g(z)
		\end{pmatrix} = 0, \quad \text{with } B \coloneqq 
		\begin{pmatrix}
				b(\sigma) & 1\\
				b(\hat{\sigma}) & 1
			\end{pmatrix}.
	\end{align*}
	Since the determinant of $B$ is	
	\[
	\det(B)= b(\sigma)-b(\hat\sigma)\ne0,
	\]
	 the only solution is $g(y)=g(z)=0$.
\end{proof}

\autoref{prop:uniquel3} shows that $g(y) = 0$ for every $y\in\mathcal{Y}_2$ for which there exists a level $\lambda\in \R$ such that $b$ is non-constant on $C_{y,\lambda}$. If we impose that the solution $g$ shall be continuous, we can extend this point-wise result also to points $y$ which do not fulfill the assumptions in \autoref{prop:uniquel3}, but for which we find in every neighborhood of $y$ some point which does.

\begin{lemma}\label{th:C}
We consider the setting of \autoref{pr:main} for $d>3$. Let $y\in\mathcal Y_2\cap\mathcal B_{2k_0}^d\setminus\R\nu$. There then exist an open neighborhood $V\subseteq\R^d$ of $y$ and an open interval $I\subseteq\R$ such that the set $C_{y',\lambda'}$, introduced in \autoref{def:C}, is a $(d-3)$-dimensional manifold for every $y'\in V$ and $\lambda'\in I$.

Its tangent space is given by
\begin{equation}\label{eq:TC}
\mathrm T_{\sigma'}C_{y',\lambda'} = \big(\operatorname{span}\{y',\sigma',\nu\}\big)^\perp \quad \text{for every }\sigma'\in C_{y',\lambda'}.
\end{equation}
\end{lemma}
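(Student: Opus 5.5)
The plan is to realise $C_{y',\lambda'}$ as a regular level set of a smooth map, intersected with an open set. Define
\[
\Phi_{y',\lambda'}\colon\R^d\to\R^3,\qquad \Phi_{y',\lambda'}(\sigma)\coloneqq\big(\norm\sigma^2-k_0^2,\ \norm{\sigma+y'}^2-k_0^2,\ \inner\sigma\nu-\lambda'\big).
\]
Then
\[
C_{y',\lambda'}=\Phi_{y',\lambda'}^{-1}(0)\cap\Omega_\omega\cap\Omega_{H_\nu\omega}\cap(\Omega_{e_d}-y'),
\]
and the last three sets are open. The gradients of the three components at $\sigma'$ are $2\sigma'$, $2(\sigma'+y')$ and $\nu$, and they span the same space as $\{y',\sigma',\nu\}$. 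So wherever $y',\sigma',\nu$ are linearly independent, $0$ is a regular value locally. The preimage is then a $(d-3)$-dimensional submanifold whose tangent space is the common kernel of the three differentials, namely $(\operatorname{span}\{y',\sigma',\nu\})^\perp$. This gives \autoref{eq:TC} once we know that linear independence holds at \emph{every} point of $C_{y',\lambda'}$.

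Linear independence at all points is therefore the main issue, and I would handle it geometrically.
\begin{itemize}
\item For $y'\in\mathcal B^d_{2k_0}$, the set $\Sp^{d-1}_{k_0}\cap(\Sp^{d-1}_{k_0}-y')$ is a $(d-2)$-sphere. It has centre $-y'/2$ and radius $\sqrt{k_0^2-\norm{y'}^2/4}>0$, and it lies in the affine hyperplane $\{\inner\sigma{y'}=-\norm{y'}^2/2\}$.
\item Since $y'\notin\R\nu$, the vectors $y',\nu$ are independent and $\operatorname{span}\{y',\nu\}$ is a $2$-plane.
\item This $2$-plane meets the affine hyperplane in a line, because $y'\in\operatorname{span}\{y',\nu\}$ is not orthogonal to $y'$. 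The line meets the sphere in at most two points $p_\pm(y')$.
\item Only at $p_\pm(y')$ can $\sigma',y',\nu$ be dependent. These points, and hence the values $\mu_\pm(y')\coloneqq\inner{p_\pm(y')}\nu$, depend continuously on $y'$ near $y$, since they solve a quadratic with nonnegative discriminant and $\mathcal B^d_{2k_0}\setminus\R\nu$ is open.
\end{itemize}

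Next I would pick $\lambda_0$, the neighbourhood $V$ and the interval $I$.
\begin{itemize}
\item Since $y\in\mathcal Y_2$, the set $\Sigma_2\cap(S_{e_d}-y)$ is non-empty. By the argument of \autoref{lem:Ly} it is relatively open in the $(d-2)$-sphere.
\item As $d>3$, this sphere has dimension at least $2$, so the function $\inner{\cdot}\nu$ is non-constant on every relatively open subset of it. Indeed, it is constant only if $\nu$ is normal to the sphere's affine hull, which is impossible because $\nu\notin\R y$.
\item Hence I can pick $\sigma_0\in\Sigma_2\cap(S_{e_d}-y)$ with $\lambda_0\coloneqq\inner{\sigma_0}\nu\notin\{\mu_+(y),\mu_-(y)\}$.
\item By continuity, choose $V\subseteq\mathcal B^d_{2k_0}\setminus\R\nu$ around $y$ and $I\ni\lambda_0$ with $\overline I\cap\{\mu_\pm(y')\}=\emptyset$ for all $y'\in V$.
\end{itemize}
For $y'\in V$ and $\lambda'\in I$, no point of $C_{y',\lambda'}$ lies in $\operatorname{span}\{y',\nu\}$. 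So $\Phi_{y',\lambda'}$ has full rank $3$ along all of $\Phi_{y',\lambda'}^{-1}(0)\cap$ (the open set), and the regular value theorem yields the manifold structure and the tangent space formula.

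If non-emptiness of $C_{y',\lambda'}$ is also wanted, I would shrink $V$ and $I$ further using the implicit function theorem at $(\sigma_0,y,\lambda_0)$. The $\sigma$-differential of $(\sigma,y',\lambda')\mapsto\Phi_{y',\lambda'}(\sigma)$ is surjective there. Together with the openness conditions, this gives points of $C_{y',\lambda'}$ near $\sigma_0$ for all $(y',\lambda')$ close to $(y,\lambda_0)$.

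The step I expect to be most delicate is the uniform choice of $I$ avoiding the exceptional values $\mu_\pm(y')$. The continuity of $p_\pm(y')$ must be checked carefully, including the case where the line is tangent or misses the sphere, where the exceptional set is a single point or empty and only helps.
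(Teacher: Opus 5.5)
Your proposal is correct and follows essentially the same route as the paper. The rank condition for $\Phi_{y',\lambda'}$, which the paper phrases as transversality of $\Sigma_2\cap(S_{e_d}-y')$ with the hyperplane $\{\inner{\sigma}{\nu}=\lambda'\}$, can only fail at the two points of the $(d-2)$-sphere lying in $\operatorname{span}\{y',\nu\}$; so one picks $\lambda$ away from their $\nu$-values and perturbs, and your explicit continuity of $\mu_\pm(y')$ makes precise the paper's appeal to the stability of transversal intersections. The tangent or missing-line case you worry about never occurs: that line passes through the centre $-\frac{y'}{2}$, so $p_\pm(y')=-\frac{y'}{2}\pm\sqrt{k_0^2-\frac14\norm{y'}^2}\,\frac{\nu_{y'}}{\norm{\nu_{y'}}}$, with $\nu_{y'}$ the projection of $\nu$ onto $(y')^\perp$, are exactly the maximiser and minimiser of $\inner{\cdot}{\nu}$ on that sphere.
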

\begin{proof}
For an arbitrary value $\lambda\in\R$, the set $C_{y,\lambda}$ is by definition the intersection of the relatively open subset $M_y\coloneqq\Sigma_2\cap(S_{e_d}-y)$ of $\mathbb S_{k_0}^{d-1}\cap(\mathbb S_{k_0}^{d-1}-y)$ and the hyperplane $N_\lambda\coloneqq\{\sigma\in\R^d\mid \inner\sigma\nu=\lambda\}$. Since $y\in\mathcal Y_2$, we have that $M_y\ne\emptyset$, which is thus a $(d-2)$-dimensional submanifold of the $(d-2)$-dimensional sphere
\[ \mathbb S_{k_0}^{d-1}\cap(\mathbb S_{k_0}^{d-1}-y)=\left\{\sigma\in\R^d\;\middle|\;\inner{\sigma-\frac y2}y=0,\,\norm{\sigma-\frac y2}=r_y\right\} \]
with radius $r_y\coloneqq\sqrt{k_0^2-\frac14\norm y^2}\in(0,k_0)$ around $\frac y2$ in a plane orthogonal to $y$. The orthogonal complement of its tangent space at a point $\sigma\in\mathbb S_{k_0}^{d-1}\cap(\mathbb S_{k_0}^{d-1}-y)$ is thus given by $\operatorname{span}\{y,\sigma-\frac y2\}=\operatorname{span}\{y,\sigma\}$.

We denote by $\nu_y\coloneqq\nu-\inner\nu y\frac y{\norm y^2}$ the orthogonal projection of $\nu$ onto the orthogonal complement of $y$. Since $y\notin \R\nu$, we have that $\nu_y \neq 0$. For every point
\[ \sigma\in\tilde M_y\coloneqq M_y\setminus\operatorname{span}\{y,\nu\}= M_y\setminus\left\{\frac y2-\frac{r_y}{\norm{\nu_y}}\nu_y,\frac y2+\frac{r_y}{\norm{\nu_y}}\nu_y\right\} \]
 the vectors $y$, $\sigma$, and $\nu$ are linearly independent, which implies that the tangent spaces of $\tilde M_y$ and $N_\lambda$ fulfill $\mathrm T_\sigma\tilde M_y+\mathrm T_\sigma N_\lambda=(\operatorname{span}\{y,\sigma\})^\perp+\nu^\perp=\R^d$ at every point $\sigma\in\tilde M_y\cap N_\lambda$.

Picking thus an arbitrary element $\sigma_y\in\tilde M_y$ and setting $\lambda_y\coloneqq\inner{\sigma_y}\nu$, we have
\[ \inner{\frac y2-\frac{r_y}{\norm{\nu_y}}\nu_y}\nu < \lambda_y < \inner{\frac y2+\frac{r_y}{\norm{\nu_y}}\nu_y}\nu \]
so that $C_{y,\lambda_y}=M_y\cap N_{\lambda_y}\subseteq\tilde M_y$ and therefore $\mathrm T_\sigma M_y+\mathrm T_\sigma N_{\lambda_y}=\R^d$ for every $\sigma\in C_{y,\lambda_y}$. This means that the manifolds $M_y$ and $N_{\lambda_y}$ intersect transversally and we therefore find open neighborhoods $V\subseteq\R^d$ of $y$ and $I\subseteq\R$ of $\lambda_y$ such that $M_{y'}$ and $N_{\lambda'}$ intersect transversally in a non-empty set $C_{y',\lambda'}$ for every $y'\in V$ and $\lambda'\in I$, which implies that $C_{y',\lambda'}$ is a $(d-3)$-dimensional manifold.

Moreover, its tangent space $\mathrm T_{\sigma'}C_{y',\lambda'}$ at a point $\sigma'\in C_{y',\lambda'}$ is the intersection of the tangent spaces $\mathrm T_{\sigma'}M_{y'}=(\operatorname{span}\{y',\sigma'\})^\perp$ and $\mathrm T_{\sigma'}N_{\lambda'}=\nu^\perp$, which yields \autoref{eq:TC}.
\end{proof}

It is not a big limitation that this statement is restricted to the subset $\mathcal Y_2\cap\mathcal B_{2k_0}^d\setminus\R\nu$ of $\mathcal Y_2$ as this subset is dense.

\begin{lemma}
We consider the setting of \autoref{pr:main} for $d\ge2$. The set $\mathcal Y_2\cap\mathcal B_{2k_0}^d\setminus\R\nu$ is open in $\R^d$ and dense in $\mathcal Y_2$.
\end{lemma}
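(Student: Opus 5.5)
Two things must be shown: the set $\mathcal Y_2\cap\mathcal B^d_{2k_0}\setminus\R\nu$ is open in $\R^d$, and it is dense in $\mathcal Y_2$.

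\emph{Openness.} Since $\mathcal B^d_{2k_0}$ is open and $\R\nu$ is closed, it suffices to show that $\mathcal Y_2$ is open. By \autoref{eq:Y1_Y2}, $\mathcal Y_2$ is the image of the map $(\eta,\sigma)\mapsto\eta-\sigma$ on $S_{e_d}\times\Sigma_2$. Both sets are relatively open subsets of $\Sp^{d-1}_{k_0}$; for $\Sigma_2=\Sp^{d-1}_{k_0}\cap\Omega_\omega\cap\Omega_{H_\nu\omega}$ this uses $\inner{H_\nu\sigma}\omega=\inner\sigma{H_\nu\omega}$. I would use the alternative description
\[
\mathcal Y_2=\bigcup_{\sigma\in\Sigma_2}(S_{e_d}-\sigma)=\bigcup_{\eta\in S_{e_d}}(\eta-\Sigma_2)
\]
and show that a neighbourhood of any $y=\eta-\sigma$ is covered.

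The cleanest route is the submersion argument. The map $\Phi(\eta,\sigma)=\eta-\sigma$ has differential with image $T_\eta\Sp^{d-1}_{k_0}+T_\sigma\Sp^{d-1}_{k_0}=\eta^\perp+\sigma^\perp$. This equals $\R^d$ whenever $\eta$ and $\sigma$ are linearly independent, so at such points $\Phi$ is open.

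If instead $\eta=\pm\sigma$, then $y\in\{0,2\eta\}$. The case $\norm y=2k_0$ lies outside the ball and can be ignored. For $y=0$ (so $\eta=\sigma\in\Sigma_2\cap S_{e_d}$), I perturb $\sigma$ slightly within the open set $\Sigma_2\cap S_{e_d}$ while keeping the point in question. More directly, for a point $y$ near $0$ I pick $\sigma$ close to $\eta_0$ and solve $\eta=y+\sigma\in\Sp^{d-1}_{k_0}$; this is possible when $\norm y<2k_0$, by taking $\sigma$ on the $(d-2)$-sphere $\Sp^{d-1}_{k_0}\cap(\Sp^{d-1}_{k_0}-y)$, which passes near $\eta_0$ when $y$ is small. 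Membership in the open conditions then follows by continuity. In fact this intersection-sphere argument handles every $y\in\mathcal B^d_{2k_0}$ uniformly, so I would use it throughout: for $y'$ near $y$, the sphere $\Sp^{d-1}_{k_0}\cap(\Sp^{d-1}_{k_0}-y')$ depends continuously on $y'$ (centre $y'/2$, radius $r_{y'}>0$). Hence it contains points near a given $\sigma\in\Sigma_2\cap(S_{e_d}-y)$, and these lie in the open set $\Sigma_2\cap(S_{e_d}-y')$.

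\emph{Density.} I must show $\mathcal Y_2\subseteq\overline{\mathcal Y_2\cap\mathcal B^d_{2k_0}\setminus\R\nu}$. Every point of $\mathcal Y_2$ has norm at most $2k_0$, with equality only when $\eta=-\sigma$. Given $y=\eta-\sigma\in\mathcal Y_2$, I move $(\eta,\sigma)$ within the open product $S_{e_d}\times\Sigma_2$ to nearby pairs $(\eta',\sigma')$ that are linearly independent. This is possible since $d\ge2$ and the sets are open in the sphere, so $\norm{\eta'-\sigma'}<2k_0$. By the submersion property, the points $\eta'-\sigma'$ then fill an open set near $y$. An open set cannot be contained in the line $\R\nu$ when $d\ge2$, so we can also avoid $\R\nu$. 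This yields points of $\mathcal Y_2\cap\mathcal B^d_{2k_0}\setminus\R\nu$ arbitrarily close to $y$.

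The main (mild) obstacle is the degenerate points $\eta=\pm\sigma$ where $\Phi$ fails to be a submersion. The perturbation step handles them, provided one checks that $S_{e_d}$ and $\Sigma_2$ are relatively open. Note that if $\Sigma_2=\emptyset$ all sets are empty and the claim is trivial.
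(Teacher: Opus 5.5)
Your overall strategy matches the paper's. For openness you use that $(\eta,\sigma)\mapsto\eta-\sigma$ is a submersion at linearly independent pairs, which is equivalent to $\Sp^{d-1}_{k_0}$ and $\Sp^{d-1}_{k_0}-y$ meeting transversally for $0<\norm y<2k_0$. For density you perturb degenerate representations and then discard the line $\R\nu$. Your density paragraph is correct as written.

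The flaw is in the openness reduction. It is not true that $\mathcal Y_2$, or even $\mathcal Y_2\cap\mathcal B^d_{2k_0}$, is open, because the origin is in general not an interior point. Take $d=2$, $\nu=e_1$, $\omega=e_2$. Then $H_\nu\omega=\omega$, so $\Sigma_2=S_{e_2}$ and $0=\eta-\eta\in\mathcal Y_2$. But $te_2\notin\mathcal Y_2$ for every $t\neq0$: writing $\eta-\sigma=te_2$ with $\eta,\sigma\in S_{e_2}$ forces equal first coordinates, hence $\eta=\sigma$ and $t=0$.

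Accordingly, your argument for $y=0$ breaks. For small $y$, the set $\Sp^{d-1}_{k_0}\cap(\Sp^{d-1}_{k_0}-y)$ lies in the hyperplane $\inner{\sigma}{y}=-\frac12\norm y^2$, so it is close to the great sphere orthogonal to $y$. It passes near $\eta_0$ only when $y$ is almost orthogonal to $\eta_0$; for $y=t\eta_0$ all its points are at distance about $\sqrt2\,k_0$ from $\eta_0$. Likewise, your claim that the intersection-sphere argument works uniformly on all of $\mathcal B^d_{2k_0}$ fails at $y=0$. There the two spheres coincide, transversality is lost, and the intersections do not vary continuously. (Incidentally, the centre of that sphere is $-y'/2$, not $y'/2$; this is immaterial.)

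The repair is immediate. Since $0\in\R\nu$, you only need openness of $\mathcal Y_2\cap\mathcal B^d_{2k_0}\setminus\{0\}$. For every $y$ in this set, each representation $y=\eta-\sigma$ has $\eta,\sigma$ linearly independent, because $\eta=\pm\sigma$ gives $y\in\{0\}\cup\Sp^{d-1}_{2k_0}$. So your submersion argument alone finishes the openness part, and the paragraph on $y=0$ should simply be deleted. This is exactly how the paper argues. It proves openness and density of $\mathcal Y_2\cap\mathcal B^d_{2k_0}\setminus\{0\}$, perturbing only $\sigma$ at collinear representations for density. It then removes the closed line $\R\nu$, which has empty interior for $d\ge2$.
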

\begin{proof}
Let $y\in\mathcal Y_2\cap(\mathbb S_{2k_0}^{d-1}\cup\{0\})$. We can then write $y=\eta-\sigma$ with two collinear vectors $\eta\in S_{e_d}$ and $\sigma\in\Sigma_2$. As $\Sigma_2$ is relatively open in $\mathbb S_{k_0}^{d-1}$, we can find a sequence $(\sigma_n)_{n\in\N}$ in $\Sigma_2\setminus\{\sigma\}$ converging to $\sigma$. The sequence $(y_n)_{n\in\N}$, $y_n\coloneqq\eta-\sigma_n$, in $\mathcal Y_2\cap\mathcal B_{2k_0}^d\setminus\{0\}$ then converges to $y$, which proves that $\mathcal Y_2\cap\mathcal B_{2k_0}^d\setminus\{0\}$ is dense in $\mathcal Y_2$.

To verify that $\mathcal Y_2\cap\mathcal B_{2k_0}^d\setminus\{0\}$ is open, we express $\mathcal Y_2$ as the set $\{y\in\R^d\mid\Sigma_2\cap(S_{e_d}-y)\ne\emptyset\}$. And since the manifolds $\Sigma_2\subseteq\mathbb S_{k_0}^{d-1}$ and $S_{e_d}-y\subseteq\mathbb S_{k_0}^{d-1}-y$ intersect transversally for every $y\in\mathcal B_{2k_0}^d\setminus\{0\}$, the intersection is also non-empty in a neighborhood of such a point $y$ which shows the openness of $\mathcal Y_2\cap\mathcal B_{2k_0}^d\setminus\{0\}$.

Therefore, also $\mathcal Y_2\cap\mathcal B_{2k_0}^d\setminus\R\nu$ is an open set which is dense in $\mathcal Y_2$.
\end{proof}

\begin{lemma}\label{lem:gradb}
We consider the setting of \autoref{pr:main} for $d>3$ and assume the additional regularity $b\in C^1(\Sigma_{2,\chi};\C)$ and the property
	\begin{equation}\label{eq:densityPropHigherDim}
	\overline{\{\sigma \in \Sigma_2 \mid\nabla b(\sigma)\notin\C\pi_\sigma\nu\}} = \Sigma_2,
	\end{equation}
	where $\pi_\sigma$ denotes the orthogonal projection onto $\sigma^\perp$ as defined in \autoref{eq:projection}.

	There then exists in every relatively open subset $U\subseteq\mathcal Y_2$ a point $y'\in U$ and a value $\lambda'\in\R$ such that $b$ is not constant on the set $C_{y',\lambda'}$, introduced in \autoref{def:C}.
\end{lemma}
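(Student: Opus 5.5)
We argue by contradiction. Suppose there is a relatively open set $U\subseteq\mathcal Y_2$ such that $b$ is constant on $C_{y',\lambda'}$ for every $y'\in U$ and every $\lambda'\in\R$. Since $\mathcal Y_2\cap\mathcal B_{2k_0}^d\setminus\R\nu$ is open in $\R^d$ and dense in $\mathcal Y_2$, we may shrink $U$ to a nonempty open set contained in it. Fix $y\in U$. \autoref{th:C} then gives an open neighborhood $V\subseteq U$ of $y$ and an open interval $I$ such that $C_{y',\lambda'}$ is a $(d-3)$-dimensional manifold for all $y'\in V$ and $\lambda'\in I$. Its tangent space is
\[
\mathrm T_{\sigma'}C_{y',\lambda'}=\big(\operatorname{span}\{y',\sigma',\nu\}\big)^\perp .
\]
Because $d>3$, this tangent space is nontrivial. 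Constancy of $b$ on $C_{y',\lambda'}$ implies, since $b$ is $C^1$, that $\inner{\nabla b(\sigma')}{\tau}=0$ for all $\tau\in\mathrm T_{\sigma'}C_{y',\lambda'}$, with the inner product taken bilinearly in the complex case. Equivalently,
\[
\nabla b(\sigma')\in\C y'+\C\sigma'+\C\nu\qquad\text{for every }\sigma'\in C_{y',\lambda'} .
\]

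Next we use the radial extension of $b$. By construction $b(\tilde\chi\sigma)=b(\sigma)$, so $b$ is constant along rays and $\inner{\nabla b(\sigma')}{\sigma'}=0$ on $\Sigma_2$. The goal is to combine this with the previous inclusion for different choices of $y'$ that share the same $\sigma'$.

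Let $W$ be the set of all $\sigma'\in\Sigma_2$ that lie in some $C_{y',\lambda'}$ with $y'\in V$ and $\lambda'\in I$. The set $W$ is a nonempty, relatively open subset of $\Sigma_2$. Nonemptiness holds because $C_{y,\lambda_y}\neq\emptyset$. Openness holds because, for fixed $\sigma'$, the admissible $y'$ form the set $\{y'\mid\sigma'+y'\in S_{e_d}\}\cap V$, which is open in the translated hemisphere $S_{e_d}-\sigma'$. Also, $\lambda'=\inner{\sigma'}{\nu}$ is forced, and small perturbations of $\sigma'$ keep it in $I$.

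Now fix $\sigma'\in W$. The inclusion $\nabla b(\sigma')\in\operatorname{span}_\C\{y',\sigma',\nu\}$ then holds for every $y'$ in a relatively open piece of the $(d-1)$-dimensional sphere $\Sp^{d-1}_{k_0}-\sigma'$. Pick two such points $y'_1,y'_2$ with $\{y'_1,y'_2,\sigma',\nu\}$ linearly independent; this is possible because the sphere piece is $(d-1)$-dimensional, $d\ge4$, and a generic choice avoids the relevant subspaces. Then
\[
\operatorname{span}\{y'_1,\sigma',\nu\}\cap\operatorname{span}\{y'_2,\sigma',\nu\}=\operatorname{span}\{\sigma',\nu\},
\]
so $\nabla b(\sigma')\in\C\sigma'+\C\nu$. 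Applying orthogonality to $\sigma'$, that is, applying $\pi_{\sigma'}$, yields $\nabla b(\sigma')\in\C\pi_{\sigma'}\nu$.

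This holds on the nonempty relatively open set $W\subseteq\Sigma_2$. Hence the set $\{\sigma\in\Sigma_2\mid\nabla b(\sigma)\notin\C\pi_\sigma\nu\}$ avoids $W$, and so its closure cannot be all of $\Sigma_2$. This contradicts \autoref{eq:densityPropHigherDim}.

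The main obstacle is the bookkeeping for $W$. We must verify that each fixed $\sigma'\in W$ really sees an open family of $y'$ inside $V$ that remains in the transversal regime of \autoref{th:C}, and that two independent choices $y'_1,y'_2$ exist. If this turns out to be delicate, a fallback is to vary $y'$ only along a small curve through $y$ and check, via the tangent formula, that the span of the spaces $\operatorname{span}\{y',\sigma',\nu\}$ collapses to $\operatorname{span}\{\sigma',\nu\}$.
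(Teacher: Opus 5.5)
Your argument is correct and is essentially the paper's proof run as a contradiction. Both rest on the tangent-space formula $\mathrm T_{\sigma'}C_{y',\lambda'}=(\operatorname{span}\{y',\sigma',\nu\})^\perp$ from \autoref{th:C}, the radial invariance $\nabla b(\sigma')\perp\sigma'$, and a generic choice of $y'=\eta'-\sigma'$ through a fixed $\sigma'$, which is where $d>3$ enters; the paper picks $\sigma'$ from the dense set and one $y'\notin\operatorname{span}\{\nabla\tilde b(\sigma'),\sigma',\nu\}$ with $\tilde b\in\{\operatorname{Re}(b),\operatorname{Im}(b)\}$, while you intersect two complex spans over $W$, and the openness of $W$ you worried about follows from $W=\Sigma_2\cap\{\sigma'\mid\inner{\sigma'}{\nu}\in I\}\cap(S_{e_d}-V)$ with $V$ open.
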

\begin{proof}
Let $U\subseteq\mathcal Y_2$ be an arbitrary relatively open set. Since the open set $\mathcal Y_2\cap\mathcal B_{2k_0}^d\setminus\R\nu$ is dense in $\mathcal Y_2$, we can find a point $y\in U\cap\mathcal B_{2k_0}^d\setminus\R\nu$. By \autoref{th:C}, there exist an open neighborhood $V\subseteq U$ of $y$ and an open interval $I\subseteq\R$ such that $C_{y',\lambda'}$ is a $(d-3)$-dimensional manifold for every $y'\in V$ and $\lambda'\in I$.

Writing $y=\eta-\sigma$ for some $\eta\in S_{e_d}$ and $\sigma\in\Sigma_2$ with $\lambda\coloneqq\inner\sigma\nu\in I$, we can find, thanks to our assumption in \autoref{eq:densityPropHigherDim}, a point $\sigma'\in\Sigma_2$ with $\eta-\sigma'\in V$ and $\lambda'\coloneqq\inner{\sigma'}\nu\in I$ such that $\nabla b(\sigma')\notin\C\pi_\sigma\nu$. We can therefore pick $\tilde b\in\{\operatorname{Re}(b),\operatorname{Im}(b)\}$ either as the real or the imaginary part of $b$ such that $\nabla\tilde b(\sigma')$ and $\pi_\sigma\nu$ are linearly independent.

Since $\tilde b$ is by definition in \autoref{pr:main} constant along $\Sigma_{2,\chi}\cap\R\sigma'$, we moreover have that $\nabla\tilde b(\sigma')$ is orthogonal to $\sigma'$. Therefore, the three vectors $\nabla\tilde b(\sigma')$, $\sigma'$, and $\pi_\sigma\nu$ are linearly independent, which implies that also $\nabla\tilde b(\sigma')$, $\sigma'$, and $\nu$ are linearly independent.

Finally, we pick a point $\eta'\in S_{e_d}$ close to $\eta$ such that $y'\coloneqq\eta'-\sigma'\in V\setminus\operatorname{span}\{\nabla\tilde b(\sigma'),\sigma',\nu\}$ and obtain that
\[ \nabla\tilde b(\sigma')\notin\operatorname{span}\{y',\sigma',\nu\}=(\mathrm T_{\sigma'}C_{y',\lambda'})^\perp, \]
which means that $\tilde b$ and thus also $b$ cannot be constant on $C_{y',\lambda'}$.
\end{proof}

As direct consequence, we can now state one of our main results.
\begin{theorem}\label{thm:uq2}
	We consider the setting of \autoref{pr:main} for $d>3$ and assume the additional regularity $b\in C^1(\Sigma_{2,\chi};\C)$ and the property from \autoref{eq:densityPropHigherDim}.

	For every continuous solution $g\in C(\R^d;\C)$ of \autoref{eq:sys_eq}, we then have $g(y)=0$ for all $y\in\mathcal{Y}_1\cup\mathcal{Y}_2$. 
\end{theorem}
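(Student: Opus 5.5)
On $\mathcal Y_1$ the claim is immediate from \autoref{lem:uqS1}, so the work is on $\mathcal Y_2$. I will combine the pointwise uniqueness from \autoref{prop:uniquel3} with the density statement of \autoref{lem:gradb}, and then pass to the closure using continuity of $g$.

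First, let
\[
D\coloneqq\{y\in\mathcal Y_2\mid \text{there is }\lambda\in\R\text{ such that } b \text{ is not constant on } C_{y,\lambda}\}.
\]
By \autoref{prop:uniquel3}, every solution $g$ of \autoref{eq:sys_eq} vanishes on $D$. \autoref{lem:gradb} uses the assumptions $b\in C^1(\Sigma_{2,\chi};\C)$ and \autoref{eq:densityPropHigherDim}. It states that every relatively open subset $U\subseteq\mathcal Y_2$ contains a point of $D$. Hence $D$ is dense in $\mathcal Y_2$ in the relative topology.

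Now take an arbitrary $y\in\mathcal Y_2$. By density, there is a sequence $(y_n)_{n\in\N}$ in $D$ with $y_n\to y$; for instance, pick $y_n\in D\cap\mathcal B^d_{1/n}(y)$. Since $g\in C(\R^d;\C)$ and $g(y_n)=0$ for all $n$, we get $g(y)=\lim_{n\to\infty}g(y_n)=0$. Together with \autoref{lem:uqS1}, this gives $g=0$ on $\mathcal Y_1\cup\mathcal Y_2$.

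The substantive difficulty is already handled in \autoref{lem:gradb}, which guarantees non-constancy of $b$ on some $C_{y',\lambda'}$ near every point. What remains is to check that the relative topology is the right one: the sets $\mathcal Y_2\cap\mathcal B^d_{1/n}(y)$ are relatively open in $\mathcal Y_2$ and non-empty because they contain $y$. They therefore qualify as admissible $U$ in \autoref{lem:gradb}, so no further obstacle is expected.
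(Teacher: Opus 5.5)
Your proposal is correct and follows essentially the same route as the paper: \autoref{lem:uqS1} handles $\mathcal Y_1$, \autoref{lem:gradb} combined with \autoref{prop:uniquel3} produces zeros of $g$ in every relative neighborhood of a point $y\in\mathcal Y_2$, and continuity of $g$ along a sequence converging to $y$ gives $g(y)=0$. Your explicit check that the sets $\mathcal Y_2\cap\mathcal B^d_{1/n}(y)$ are non-empty and relatively open, and hence admissible in \autoref{lem:gradb}, is a small detail that the paper leaves implicit.
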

\begin{proof}
Let $g\in C(\R^d;\C)$ be an arbitrary solution of \autoref{eq:sys_eq}. The statement on $\mathcal{Y}_1$ follows from \autoref{lem:uqS1}.

	 Let further $y\in\mathcal{Y}_2$ be arbitrary, and let $U$ be a neighborhood of this point. Since the set of all $\sigma\in\Sigma_2$ for which $\nabla b(\sigma)\notin\C\pi_\sigma\nu$ is dense in $\Sigma_2$, there exist by \autoref{lem:gradb}, at least one point $y'\in U$ at which the assumption of \autoref{prop:uniquel3} is satisfied which gives $g(y')=0$.

	Consequently, we can construct a sequence $(y'_n)_{n \in \N} $ with $y'_n \to y$ and $g(y'_n) = 0$ for all $n\in\N$.
	By continuity of $g$, we conclude that
	\[
	g(y) = \lim_{n \to \infty} g(y_n') = 0.
	\]
\end{proof}

\section{Reconstructible Fourier coefficients in three dimensions}\label{subsec:3d}

If we reduce in \autoref{pr:main} the dimensions to $d=3$, it is no longer possible to choose for every point $y\in\mathcal Y_2$ a value $\lambda\in\R$ such that the set $C_{y,\lambda}$, defined in \autoref{def:C}, consists of a continuum of points. Hence, in contrast to the higher-dimensional case, we cannot obtain an infinite number of equations for the pair $(g(y),g(z))$ with $z\coloneqq y+2t\nu$ in \autoref{eq:homFy}.

Instead, as discussed in \autoref{rem:F_C_dimension}, the set $C_{y,\lambda}$ consists in the non-degenerate case of at most two points and for some $y\in\mathcal Y_2$, there may be no value $\lambda$ for which $C_{y,\lambda}$ has more than a single point.

To overcome this, we exploit \autoref{lem:openBy}. It states that the coupling set $F_{\tilde y}$ of a point $\tilde y\in\mathcal Y_2$  is a relatively open subset of a typically non-degenerate line segment. Hence, we can choose an additional point $\hat y\in\mathcal Y_2$ on the line $\tilde y+\R\nu$ such that the intersection of the coupling sets $F_{\tilde y}$ and $F_{\hat y}$ is a non-empty, relatively open subset of a non-degenerate line segment.  Selecting two distinct points $\tilde z,\hat z\in F_{\tilde y}\cap F_{\hat{y}}$ gives one equation for each of the pairs $(g(\tilde y),g(\tilde z))$, $(g(\tilde y),g(\hat z))$, $(g(\hat y),g(\tilde z))$, and $(g(\hat y),g(\hat z))$ in \autoref{eq:homFy}. If these equations are linearly independent, the only solution is $g(\tilde y)=g(\hat y)=g(\tilde z)=g(\hat z)=0$.

We now relate this back to the original point $y$: given $y\in\mathcal{Y}_2$ together with a point $z\in F_y$ in its coupling set, we show  that there typically exists in every neighborhood of $y$ a point $\tilde y=y+w\in\mathcal Y_2$ which we can complete with $\hat y=y+w+\delta\nu$, $\tilde z=z+w$, and $z+w+\varepsilon\nu$ to such a set of four points.

A visualization of this construction is provided in \autoref{fig:neighbor}.

\begin{lemma}\label{lem:pointsonline}
In the setting of \autoref{pr:main} for $d=3$, let $\eta\in S_{e_3}$ and $\sigma\in\Sigma_2$ be given such that the vectors $\eta$, $\sigma$ and $\nu$ are linearly independent and set $y\coloneqq\eta-\sigma$ and $z\coloneqq\eta-H_\nu\sigma$.

Then, for sufficiently small neighborhoods $U\subseteq\R^3$ of the origin and $J\subseteq\R$ of $0$, there exist smooth functions $\hat\eta\colon U\times J\times J\to S_{e_3}$ and $\hat{\sigma}\colon U\times J\times J\to\Sigma_2$ defined by $\hat\eta(0,0,0) = \eta$, $\hat\sigma(0,0,0)=\sigma$, 
	\begin{equation}\label{eq:hatsigma}
		y+w+\delta\nu = \hat\eta(w,\delta,\varepsilon) -\hat\sigma(w,\delta,\varepsilon), \quad \text{and } \quad
		z+w+\varepsilon\nu =\hat \eta(w,\delta,\varepsilon)- H_\nu\hat\sigma(w,\delta,\varepsilon).
	\end{equation}
\end{lemma}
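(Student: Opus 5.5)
We rewrite the two conditions in \autoref{eq:hatsigma} as equations for $(\hat\eta,\hat\sigma)$ and solve them with the implicit function theorem. Subtracting the first condition from the second gives
\[
\hat\sigma-H_\nu\hat\sigma=2\inner{\hat\sigma}{\nu}\nu=z-y+(\varepsilon-\delta)\nu=\big(2\inner\sigma\nu+\varepsilon-\delta\big)\nu .
\]
So, once $\hat\sigma$ is known, the system is equivalent to
\[
\hat\eta=y+w+\delta\nu+\hat\sigma
\]
together with the constraints
\[
\norm{\hat\sigma}=k_0,\qquad \inner{\hat\sigma}{\nu}=\inner\sigma\nu+\tfrac12(\varepsilon-\delta),\qquad \norm{y+w+\delta\nu+\hat\sigma}=k_0 .
\]
These are three scalar equations for $\hat\sigma\in\R^3$, depending smoothly on the parameters $(w,\delta,\varepsilon)$.

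Define $\Phi\colon\R^3\times\R^3\times\R\times\R\to\R^3$ by
\[
\Phi(\tau;w,\delta,\varepsilon)\coloneqq\Big(\norm\tau^2-k_0^2,\ \inner\tau\nu-\inner\sigma\nu-\tfrac12(\varepsilon-\delta),\ \norm{y+w+\delta\nu+\tau}^2-k_0^2\Big).
\]
At the base point $(\tau;w,\delta,\varepsilon)=(\sigma;0,0,0)$ we have $\Phi=0$, since $\norm\sigma=k_0$ and $\norm{y+\sigma}=\norm\eta=k_0$. The Jacobian of $\Phi$ with respect to $\tau$ there has rows $2\sigma^T$, $\nu^T$ and $2(y+\sigma)^T=2\eta^T$. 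This matrix is invertible exactly when $\sigma,\nu,\eta$ are linearly independent, which is our hypothesis. The implicit function theorem therefore gives neighbourhoods $U$ of $0\in\R^3$ and $J$ of $0\in\R$, and a smooth map $\hat\sigma\colon U\times J\times J\to\R^3$, with $\hat\sigma(0,0,0)=\sigma$ and $\Phi(\hat\sigma(w,\delta,\varepsilon);w,\delta,\varepsilon)=0$. We then set $\hat\eta\coloneqq y+w+\delta\nu+\hat\sigma$, which is smooth with $\hat\eta(0,0,0)=\eta$.

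Both identities in \autoref{eq:hatsigma} now follow by direct substitution. The first holds by the definition of $\hat\eta$. The second holds because $\hat\eta-H_\nu\hat\sigma=\hat\eta-\hat\sigma+2\inner{\hat\sigma}\nu\nu$, and the second component of $\Phi$ fixes $2\inner{\hat\sigma}\nu$ to be exactly what is needed to turn $y+w+\delta\nu$ into $z+w+\varepsilon\nu$.

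It remains to check that the images lie in the required open sets rather than only on the spheres. The first and third components of $\Phi$ put $\hat\sigma$ and $\hat\eta$ on $\Sp^2_{k_0}$. The sets $S_{e_3}$ and $\Sigma_2=S_\omega\cap S_{H_\nu\omega}$ are relatively open in $\Sp^2_{k_0}$ (the latter since $\Sigma_2=\{\sigma\in S_\omega\mid H_\nu\sigma\in S_\omega\}$). They contain $\eta$ and $\sigma$, respectively. By continuity, after shrinking $U$ and $J$ we get $\hat\eta\in S_{e_3}$ and $\hat\sigma\in\Sigma_2$.

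The only substantive step is the invertibility of the Jacobian. This is exactly where the linear independence of $\eta,\sigma,\nu$ is used, so I expect no real obstacle there. The remaining arguments are routine continuity considerations.
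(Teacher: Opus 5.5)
Your proof is correct and follows the paper's argument: both apply the implicit function theorem at $(\eta,\sigma)$, and in both the nondegeneracy comes exactly from the linear independence of $\eta$, $\sigma$, $\nu$. The differences are cosmetic. You eliminate $\hat\eta$ first and write the sphere constraints as equations in $\R^3$, so invertibility reduces to $\det(\sigma,\nu,\eta)\neq0$; the paper instead works on $\Sp^2_{k_0}\times\Sp^2_{k_0}$ and checks that the differential has trivial kernel on the tangent spaces. You also spell out the continuity argument for $\hat\eta\in S_{e_3}$ and $\hat\sigma\in\Sigma_2$, which the paper leaves implicit.
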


\begin{proof}
	We seek a pair $(\hat{\eta}, \hat{\sigma})$ close to $(\eta, \sigma)$ that satisfies \autoref{eq:hatsigma} for small $(w, \delta, \varepsilon) \in U \times J \times J$. We subtract the first equation from the second and rewrite the system in the form $\Phi(\hat{\eta}, \hat{\sigma}; w, \delta, \varepsilon)=0$ with the function
	\[
	\Phi: \mathbb{S}^2_{k_0} \times \mathbb{S}^2_{k_0} \times U \times J \times J \to \R^3 \times \R,  \qquad \Phi(\hat{\eta}, \hat{\sigma}; w, \delta, \varepsilon) \coloneqq \begin{pmatrix}
		\hat{\eta}-\hat{\sigma} - (y+w+\delta\nu) \\
		2\inner{\hat{\sigma}}{\nu} - (2\inner{\sigma}{\nu} + \varepsilon-\delta)
	\end{pmatrix}.
	\]
	By construction, we know $\Phi(\eta,\sigma; 0,0,0) = 0$.
	
	The derivative of the unperturbed map $\tilde{\Phi}(\eta,\sigma)\coloneqq\Phi(\eta,\sigma; 0,0,0)$  acts on $(u,v)\in\mathrm T_\eta\Sp_{k_0}^2\times\mathrm T_\sigma \Sp_{k_0}^2$ as
	\[
	\mathrm D\tilde{\Phi}(\eta,\sigma)(u,v) =\begin{pmatrix}u-v\\2\inner{v}{\nu}\end{pmatrix}.
	\]
	Now, $\mathrm D\tilde{\Phi}(\eta,\sigma)(u,v) = 0$ can only happen if $u=v$ and  $\inner{v}{\nu}=0$. The condition $u=v$ additionally requires  $v\in T_\sigma\Sp_{k_0}^2\cap T_\eta \Sp_{k_0}^2$ so that
	\[
	v\perp \operatorname{span}\{\nu,\sigma,\eta\}.
	\]
	Because $\nu$, $\sigma$, and $\eta$ are assumed to be linearly independent, this is only possible for $v=u=0$. Hence the kernel of $\mathrm D\tilde{\Phi}(\eta,\sigma)$ is trivial.
	
	Thus $\mathrm D\tilde\Phi(\eta,\sigma)$ is an isomorphism between two $4$-dimensional spaces and, by the implicit function theorem, there therefore exists for small $(w,\delta,\varepsilon)$ a unique solution $(\hat{\eta}(w,\delta,\varepsilon),\hat\sigma(w,\delta,\varepsilon))$ of
\[ \Phi(\hat\eta(w,\delta,\varepsilon),\hat\sigma(w,\delta,\varepsilon);w,\delta,\varepsilon)=0\quad\text{with}\quad (\hat{\eta}(0,0,0),\hat\sigma(0,0,0))=(\eta,\sigma) \]
depending smoothly on $(w,\delta,\varepsilon)$.
\end{proof}

\begin{figure}[t]
	\centering
	\includegraphics{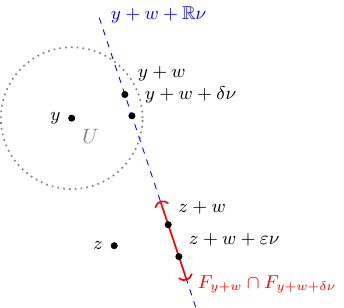}
	\caption{Illustration of the construction from \autoref{lem:pointsonline}. Starting with a point $y+w$ in a neighborhood $U$ of $y$, we perturb it slightly in the direction $\nu$ to obtain $y+w+\delta\nu$ for a sufficiently small $\delta>0$ such that the associated coupling sets $F_{y+w}$ and $F_{y+w+\delta\nu}$ intersect. From this intersection we select two points, $z+w$ and $z+w+\varepsilon\nu$. Together, the four points form a closed $4\times4$ system. }
	\label{fig:neighbor}
\end{figure}

\begin{corollary}\label{cor:closed}
In the setting of \autoref{pr:main} for $d=3$, let $\eta\in S_{e_3}$ and $\sigma\in\Sigma_2$ be given such that the vectors $\eta$, $\sigma$ and $\nu$ are linearly independent, $y\coloneqq\eta-\sigma$ and $z\coloneqq\eta-H_\nu\sigma$, and let $\hat\eta$ and $\hat\sigma$ be constructed as in \autoref{lem:pointsonline}.
Suppose further that $g\in C(\R^3;\C)$ is a continuous solution of \autoref{eq:sys_eq}. 

Then, for every $(w,\delta,\varepsilon) \in U \times J \times J$, the function $g$ satisfies
		\begin{align}\label{eq:mateq}
		B(w,\delta,\varepsilon) \begin{pmatrix}
			g(y+w)\\g(y+w+\delta\nu)\\g(z+w)\\g(z+w+\varepsilon\nu)
		\end{pmatrix} =0, 
	\end{align}
	 with
	\begin{equation}\label{eq:mat}B(w,\delta,\varepsilon) \coloneqq \begin{pmatrix}
			b(\hat\sigma(w,0,0)) & 0& 1& 0\\
			b(\hat\sigma(w,0,\varepsilon))& 0 &0 &1\\
			0& b(\hat\sigma(w,\delta,0))& 1 & 0\\
			0& b(\hat\sigma(w,\delta,\varepsilon))& 0 &1
		\end{pmatrix}.\end{equation}
\end{corollary}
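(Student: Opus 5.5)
The plan is to read off each of the four rows of \autoref{eq:mateq} as a single instance of \autoref{eq:homFy}, equivalently of \autoref{eq:homY2}, for a suitable pair $(\eta',\sigma')\in S_{e_3}\times\Sigma_2$ produced by \autoref{lem:pointsonline}. Recall that \autoref{eq:homY2} states
\[
0 = b(\sigma')g(\eta'-\sigma') + g(\eta'-H_\nu\sigma')
\]
for every $\eta'\in S_{e_3}$ and $\sigma'\in\Sigma_2$. So for each row we only need a pair $(\eta',\sigma')$ whose first point $\eta'-\sigma'$ is the vertex carrying the coefficient $b(\cdot)$ and whose second point $\eta'-H_\nu\sigma'$ is the vertex carrying the coefficient $1$.

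The four choices come from evaluating the identities \autoref{eq:hatsigma} at suitable parameter triples, all of which lie in $U\times J\times J$ when $(w,\delta,\varepsilon)$ does, provided $0\in J$ (which holds since $J$ is a neighborhood of $0$).
\begin{itemize}
\item For $(w,0,0)$, \autoref{eq:hatsigma} gives $\hat\eta-\hat\sigma=y+w$ and $\hat\eta-H_\nu\hat\sigma=z+w$. This yields $b(\hat\sigma(w,0,0))g(y+w)+g(z+w)=0$, which is the first row.
\item For $(w,0,\varepsilon)$, we get $\hat\eta-\hat\sigma=y+w$ and $\hat\eta-H_\nu\hat\sigma=z+w+\varepsilon\nu$. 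This gives the second row.
\item For $(w,\delta,0)$, we get $\hat\eta-\hat\sigma=y+w+\delta\nu$ and $\hat\eta-H_\nu\hat\sigma=z+w$. This gives the third row.
\item For $(w,\delta,\varepsilon)$, we get the fourth row, coupling $g(y+w+\delta\nu)$ and $g(z+w+\varepsilon\nu)$.
\end{itemize}
In each case \autoref{lem:pointsonline} guarantees $\hat\eta\in S_{e_3}$ and $\hat\sigma\in\Sigma_2$, so the second case of \autoref{eq:sys_eq} indeed applies.

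Collecting the four scalar identities in matrix form gives exactly $B(w,\delta,\varepsilon)$ from \autoref{eq:mat} applied to the vector of the four values of $g$. Continuity of $g$ is not needed for this identity itself; it only matters for the later use of the corollary.

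The main obstacle is bookkeeping rather than analysis. One must check that the parameter triples with a zero entry stay in the domain of $\hat\eta$ and $\hat\sigma$, and that the coefficient $b$ is evaluated at $\hat\sigma$ and not at $H_\nu\hat\sigma$. The latter matches the normalization in \autoref{eq:sys_eq}, where $b$ multiplies $g(\eta-\sigma)$.
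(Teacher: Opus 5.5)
Your proposal is correct and follows the paper's proof. The paper likewise evaluates \autoref{eq:hatsigma} at the four triples $(w,0,0)$, $(w,0,\varepsilon)$, $(w,\delta,0)$, $(w,\delta,\varepsilon)$ and reads each row of \autoref{eq:mateq} as an instance of \autoref{eq:homY2}; your side remarks that $0\in J$ keeps these triples in the domain and that continuity of $g$ is not needed for the identity itself are both accurate.
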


\begin{proof}
	For each choice $(w,\delta,\varepsilon)\in U\times J\times J$, we obtain from the defining property
	of the functions $\hat\eta$ and~$\hat\sigma$, \autoref{eq:hatsigma}, the corresponding equation
	\[ 0 = b(\hat\sigma(w,\delta,\varepsilon))g(y+w+\delta\nu)+g(z+w+\varepsilon\nu) \]
	from the system in \autoref{eq:homY2}. Choosing the values $(w,0,0)$, $(w,0,\varepsilon)$, $(w,\delta,0)$, and $(w,\delta,\varepsilon)$, we thus have the four equations in \autoref{eq:mateq} for the four values of $g$ at the points $y+w$, $y+w+\delta\nu$, $z+w$, and $z+w+\varepsilon\nu$.
\end{proof}

\subsection{Analysis of the system determinant}\label{subsec:det} 

Taken together,  \autoref{lem:pointsonline} and \autoref{cor:closed} show that for points 
\begin{equation*}\label{eq:points}
	y=\eta-\sigma\in\mathcal{Y}_2, \qquad z=\eta-H_\nu\sigma\in F_y, 
\end{equation*}
with some $\eta\in S_{e_3}$ and $\sigma\in \Sigma_2\cap(S_{e_3}-y)$ such that the vectors $\eta$, $\sigma$, and $\nu$ are linearly independent, one can locally construct a $4\times4$ system of the form \autoref{eq:mateq}. In the following, we will keep the points $y$ and $z$ fixed and study the invertibility of the associated system matrix $B(w,\delta,\varepsilon)$. In particular, we derive a condition on the coefficient function $b\colon\Sigma_{2,\chi}\to\C$ that guarantees the determinant of the system matrix $B(w,\delta,\varepsilon)$ in \autoref{eq:mat} to be non-zero for at least one point $(w,\delta,\varepsilon)$ in every neighborhood of $0$.

The system in \autoref{eq:mat} fails to allow reconstruction in a neighborhood of $y$ only if there exists a neighborhood $U\subseteq\R^3$ of $0$ and an interval $J\subseteq\R$ around $0$ such that
\begin{equation*}
	0 = \det(B(w,\delta,\varepsilon))
	= b(\hat\sigma(w,0,\varepsilon))\,b(\hat\sigma(w,\delta,0))	- b(\hat\sigma(w,0,0))\,b(\hat\sigma(w,\delta,\varepsilon)),
\end{equation*}
that is,
\begin{equation}\label{eqB}
	\frac{b(\hat\sigma(w,\delta,\varepsilon))}{b(\hat\sigma(w,0,\varepsilon))}	= \frac{b(\hat\sigma(w,\delta,0))}{b(\hat\sigma(w,0,0))},
\end{equation}
for all $w\in U$ and $\delta,\varepsilon\in J$. 
This identity suggests working with the logarithm of $b$. Because $b(\sigma)\neq 0$ on $\Sigma_{2}$, we can on $\Sigma_{2,\chi}$ locally define
\[
c(\sigma)\coloneqq \log b(\sigma),
\]
(as around every $\sigma\in\Sigma_{2}$ there exists a neighborhood on which a smooth branch of the complex logarithm is well-defined).

	\begin{lemma}\label{thB}
We consider the \autoref{pr:main} for $d=3$ with the additional regularity assumption that $b\in C^2(\Sigma_{2,\chi};\C)$. Let $\eta\in S_{e_3}$ and $\sigma\in\Sigma_2$ be such that $\eta$, $\sigma$, and $\nu$ are linearly independent, and let the functions $\hat\eta$ and $\hat\sigma$ be defined as in \autoref{lem:pointsonline}.

	We assume that $b$ satisfies \autoref{eqB}. Then, with $c(\tilde\sigma)\coloneqq\log b(\tilde\sigma)$ for $\tilde\sigma\in\Sigma_{2,\chi}$ in a sufficiently small neighborhood of $\sigma$, we have
	\begin{equation}\label{eqC}
		\begin{split}
			&-\frac{1}{2\mu(\tilde\eta)}\mathrm D^2c(\sigma)\big(\tilde\eta\times\sigma,\tilde\eta\times\sigma\big)-\frac{\inner{\tilde\eta}\nu}{\mu(\tilde\eta)}\mathrm D^2c(\sigma)\big(\tilde\eta\times\sigma,\nu\times\sigma\big) \\
			&\qquad+\alpha(\tilde\eta)\mathrm Dc(\sigma)(\tilde\eta\times\sigma)+\beta(\tilde\eta)\mathrm Dc(\sigma)(\nu\times\sigma) = 0
		\end{split}
	\end{equation}
	for every $\tilde{\eta}\in S_{e_3}$ sufficiently close to $\eta$, where  the functions $\mu$, $\alpha$, and $\beta$ are defined in \autoref{eqMu}, \autoref{eqAlpha}, and \autoref{eqBeta}.
\end{lemma}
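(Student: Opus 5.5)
The plan is to take the logarithm of \autoref{eqB} and differentiate in $\delta$ and $\varepsilon$ at $\delta=\varepsilon=0$. Setting $c=\log b$ near $\sigma$, \autoref{eqB} becomes
\[
c(\hat\sigma(w,\delta,\varepsilon))-c(\hat\sigma(w,0,\varepsilon))-c(\hat\sigma(w,\delta,0))+c(\hat\sigma(w,0,0))=0
\]
for all small $w,\delta,\varepsilon$. This is a mixed second difference in $(\delta,\varepsilon)$. Dividing by $\delta\varepsilon$ and letting both go to zero, which is allowed since $c\in C^2$ and $\hat\sigma$ is smooth, gives
\[
\partial_\delta\partial_\varepsilon\big[c(\hat\sigma(w,\delta,\varepsilon))\big]\big|_{\delta=\varepsilon=0}=0 .
\]
By the chain rule this reads
\[
\mathrm D^2c(\hat\sigma)(\partial_\delta\hat\sigma,\partial_\varepsilon\hat\sigma)+\mathrm Dc(\hat\sigma)(\partial_\delta\partial_\varepsilon\hat\sigma)=0,
\]
evaluated at $(w,0,0)$. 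Since $w$ is an arbitrary point of a neighbourhood $U$, the base pair $(\hat\eta(w,0,0),\hat\sigma(w,0,0))$ ranges over a family of pairs. The conclusion should be an identity parametrized by $\tilde\eta=\hat\eta(w,0,0)$, evaluated in a fixed way at $\sigma$.

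Next I would compute the first and second derivatives of $\hat\sigma$ from the defining equations of \autoref{lem:pointsonline}. Differentiating $\hat\eta-\hat\sigma=y+w+\delta\nu$ and $\inner{\hat\sigma}\nu=\inner\sigma\nu+\frac12(\varepsilon-\delta)$ under the sphere constraints $\inner{\hat\eta}{\partial\hat\eta}=\inner{\hat\sigma}{\partial\hat\sigma}=0$ determines everything.

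For $\partial_\varepsilon$: we need $\partial_\varepsilon\hat\eta=\partial_\varepsilon\hat\sigma=v$ with $v\perp\hat\eta$, $v\perp\hat\sigma$ and $\inner v\nu=\frac12$. Hence $v$ is parallel to $\hat\eta\times\hat\sigma$, namely $v=\frac{\hat\eta\times\hat\sigma}{2\inner{\hat\eta\times\hat\sigma}\nu}$. This suggests $\mu(\tilde\eta)$ is essentially the triple product $\inner{\tilde\eta\times\sigma}\nu$.

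For $\partial_\delta$: $\partial_\delta\hat\sigma$ is tangent to the sphere at $\hat\sigma$, has $\nu$-component $-\frac12$, and satisfies $\inner{\hat\eta}{\partial_\delta\hat\sigma+\nu}=0$. It is therefore a combination of $\hat\eta\times\hat\sigma$ and $\nu\times\hat\sigma$, with coefficients involving $\inner{\hat\eta}\nu$. This explains the two Hessian terms in \autoref{eqC}.

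The mixed derivative $\partial_\delta\partial_\varepsilon\hat\sigma$ comes from differentiating the linear system once more. Its components along $\hat\sigma$, along $\hat\eta\times\hat\sigma$ and along $\nu\times\hat\sigma$ give the coefficients $\alpha$ and $\beta$. The radial part does not matter, because $c$ is constant along rays, so $\mathrm Dc(\hat\sigma)\hat\sigma=0$ by the extension property in \autoref{pr:main}; for the same reason the second-order radial terms can be dropped.

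The point I expect to be the main obstacle is where the identity is evaluated: \autoref{eqC} has $\mathrm D c$ and $\mathrm D^2 c$ taken at the fixed $\sigma$, with $\tilde\eta$ varying. So I need $w\mapsto(\hat\eta(w,0,0),\hat\sigma(w,0,0))$ to reach pairs $(\tilde\eta,\sigma)$ with $\sigma$ fixed and $\tilde\eta$ arbitrary near $\eta$. Taking $w=\tilde\eta-\eta$ gives $\hat\eta=\tilde\eta$ and $\hat\sigma=\sigma$: the pair $(\tilde\eta,\sigma)$ solves the system, since $\tilde\eta-\sigma=y+w$ and the $\nu$-component is unchanged, and the implicit-function uniqueness then identifies it with $(\hat\eta,\hat\sigma)(w,0,0)$. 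Linear independence of $\tilde\eta,\sigma,\nu$ persists for $\tilde\eta$ near $\eta$.

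After this substitution, the remaining work is the careful but routine bookkeeping of the mixed second derivative of $\hat\sigma$. That yields explicit $\mu,\alpha,\beta$, and multiplying through by a common nonzero factor brings the Hessian coefficients into the stated normalized form $-\frac1{2\mu}$ and $-\frac{\inner{\tilde\eta}\nu}{\mu}$.
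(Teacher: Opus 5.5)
Your proposal is correct and follows essentially the same route as the paper. Both take the logarithm of \autoref{eqB}, differentiate mixed in $\varepsilon$ and $\delta$ at $0$ to get $\mathrm D^2c(\hat\sigma)(\partial_\varepsilon\hat\sigma,\partial_\delta\hat\sigma)+\mathrm Dc(\hat\sigma)\partial_{\delta\varepsilon}\hat\sigma=0$, set $w=\tilde\eta-\eta$ so that $\hat\eta=\tilde\eta$ and $\hat\sigma=\sigma$ (the paper's \autoref{thConstantSol}), insert the derivatives of $\hat\sigma$ (the paper's \autoref{thSigmaSimple}), and multiply by $2/\mu^3(\tilde\eta)$ to obtain \autoref{eqC}. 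One small correction: no second-order radial terms need to be dropped, and in general they could not be, since $\mathrm D^2c(\sigma)(\sigma,v)=-\mathrm Dc(\sigma)v$. They simply never arise, because $\partial_\varepsilon\hat\sigma$ and $\partial_\delta\hat\sigma$ are already tangent at $\sigma$, so only $\mathrm Dc(\sigma)\sigma=0$ is used.
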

	\begin{proof}
	We assume that the domain $U\times J\times J$ of $\hat\eta$ and $\hat\sigma$ is chosen such that $c$ is well-defined at all points $\hat\sigma(w,\delta,\varepsilon)$ with $(w,\delta,\varepsilon)\in U\times J\times J$. Taking thus the logarithm of \autoref{eqB}, we find for fixed $(w,\delta)\in U\times J$ that the function
	\[
	\varepsilon\mapsto \log (b(\hat\sigma(w,\delta,\varepsilon)))- \log( b(\hat\sigma(w,0,\varepsilon)))
	\]
	is constant.
Calculating the derivative of this with respect to $\varepsilon$ at $\varepsilon=0$, we therefore get that
	\[ \mathrm Dc(\hat\sigma(w,\delta,0))\partial_\varepsilon\hat\sigma(w,\delta,0) = \mathrm Dc(\hat\sigma(w,0,0))\partial_\varepsilon\hat\sigma(w,0,0). \]
	This means that, for each fixed $w\in U$, the function
	\[ \delta\mapsto\mathrm Dc(\hat\sigma(w,\delta,0))\partial_\varepsilon\hat\sigma(w,\delta,0) \]
	is constant. Taking thus the derivative with respect to $\delta$ at $\delta = 0$ yields the relation
	\[ 0=\mathrm D^2c(\hat\sigma(w,0,0))\big(\partial_\varepsilon\hat\sigma(w,0,0),\partial_\delta\hat\sigma(w,0,0)\big)+\mathrm Dc(\hat\sigma(w,0,0))\partial_{\delta\varepsilon}\hat\sigma(w,0,0). \]
	
	Next, choose  $w=\tilde\eta-\eta$ for some $\tilde\eta\in S_{e_3}$ sufficiently close to $\eta$ such that $w\in U$. By \autoref{thConstantSol}, we then have $\hat\sigma(\tilde\eta-\eta,0,0)=\sigma$ and the relation simplifies to
	\[ \mathrm D^2c(\sigma)\big(\partial_\varepsilon\hat\sigma(\tilde\eta-\eta,0,0),\partial_\delta\hat\sigma(\tilde\eta-\eta,0,0)\big)+\mathrm Dc(\sigma)\partial_{\delta\varepsilon}\hat\sigma(\tilde\eta-\eta,0,0) = 0. \]
	
	Plugging in our expressions from \autoref{thSigmaSimple} for the derivatives of $\hat\sigma$, we obtain \autoref{eqC}.
\end{proof}

\begin{lemma}\label{thC}
We consider the \autoref{pr:main} for $d=3$ with the additional regularity assumption that $b\in C^2(\Sigma_{2,\chi};\C)$. Let $\eta\in S_{e_3}$ and $\sigma\in\Sigma_2$ be such that $\eta$, $\sigma$, and $\nu$ are linearly independent, and let the functions $\hat\eta$ and $\hat\sigma$ be defined as in \autoref{lem:pointsonline}.

If the function $b$ satisfies \autoref{eqB}, then
	\[ \mathrm Db(\sigma)(\nu\times\sigma) = 0. \]
\end{lemma}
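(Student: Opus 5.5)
We start from \autoref{thB}: if \autoref{eqB} holds, then identity \autoref{eqC} is satisfied for every $\tilde\eta\in S_{e_3}$ in a neighborhood of $\eta$. In \autoref{eqC}, the quantities $\mathrm D^2c(\sigma)$ and $\mathrm Dc(\sigma)$ are fixed numbers, since $\sigma$ does not move. All the dependence on $\tilde\eta$ sits in the explicit coefficient functions $1/\mu(\tilde\eta)$, $\inner{\tilde\eta}\nu/\mu(\tilde\eta)$, $\alpha(\tilde\eta)$ and $\beta(\tilde\eta)$, and in the arguments $\tilde\eta\times\sigma$.

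The idea is therefore to read \autoref{eqC} as a function identity in the two-dimensional variable $\tilde\eta$ and extract enough independent scalar equations to isolate $\mathrm Dc(\sigma)(\nu\times\sigma)$. This quantity equals $\mathrm Db(\sigma)(\nu\times\sigma)/b(\sigma)$, so its vanishing gives the claim because $b(\sigma)\neq0$.

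Concretely, I plan the following steps.

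\begin{enumerate}
\item Expand $\tilde\eta\times\sigma$ in a basis adapted to $\sigma$. Since $\mathrm Dc(\sigma)$ annihilates $\sigma$ (as $b$ is radially constant), only the two tangential components matter. The basis $\nu\times\sigma$ and $\sigma\times(\nu\times\sigma)$ of $\sigma^\perp$ is natural here; it is nondegenerate because $\sigma$ and $\nu$ are independent. Then \autoref{eqC} becomes
\[
Q(\tilde\eta)\coloneqq A_{11}p^2+A_{12}pq+A_{22}q^2+\alpha p+\beta q\cdot\text{(terms)}=0,
\]
with $p,q$ linear in $\tilde\eta$. Here the $A_{ij}$ are the unknown Hessian entries and the remaining unknowns are the two gradient components $c_1=\mathrm Dc(\sigma)(\nu\times\sigma)$ and $c_2$.

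\item Multiply the identity by $\mu(\tilde\eta)$ to clear denominators. This uses the explicit forms \autoref{eqMu}, \autoref{eqAlpha} and \autoref{eqBeta}, which I expect to be rational or algebraic in $\tilde\eta$ because they come from inverting $\mathrm D\tilde\Phi$.

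\item Use that an identity holding on an open piece of the sphere $S_{e_3}$ must hold as an identity of (real-analytic) functions there. Then compare independent "monomials" in $\tilde\eta$. The Hessian terms produce quadratic expressions in $\tilde\eta$ with specific structure; for example, the $\nu$-coupled term carries the extra factor $\inner{\tilde\eta}\nu$. The aim is to show that the coefficient multiplying $c_1$ contains a function of $\tilde\eta$ not in the span of the other functions. This forces $c_1=0$.
\end{enumerate}

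A cleaner alternative for step 3 is to evaluate at special points. One could choose $\tilde\eta$ in the plane $\operatorname{span}\{\sigma,\nu\}$ if such points lie near $\eta$; otherwise, take derivatives of \autoref{eqC} along $\tilde\eta$ at $\tilde\eta=\eta$. Either way, the goal is a linear system for $(A_{11},A_{12},A_{22},c_1,c_2)$ whose solution space has $c_1=0$.

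The main obstacle is exactly this independence argument. It requires the precise forms of $\mu$, $\alpha$ and $\beta$, which are defined later in the paper and presumably come from \autoref{thSigmaSimple}. I would first compute $\beta(\tilde\eta)\mu(\tilde\eta)$ and check whether it has a component, such as a term not quadratic in $\tilde\eta$ or one involving $\inner{\tilde\eta}{\sigma}$ nontrivially, that no combination of the other three coefficient functions can reproduce. If the gradient term in $c_2$ interferes, I would exploit the fact that \autoref{eqC} holds for all admissible $(\eta,\sigma)$ and vary the base point, but I expect the single-$\sigma$ comparison to suffice.
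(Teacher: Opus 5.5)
Your setup matches the paper's. The paper also starts from \autoref{eqC}, expands $\tilde\eta\times\sigma$ in the basis $\nu\times\sigma$, $\sigma\times(\nu\times\sigma)$, and arrives at one scalar relation $c_1h_1+c_2h_2+A_{11}h_{11}+A_{12}h_{12}+A_{22}h_{22}=0$ in exactly your five unknowns. However, the step that actually proves the lemma is left open in your proposal: why this relation forces $c_1=0$. You write ``the aim is to show'' and ``the main obstacle is exactly this independence argument'', so as it stands there is a gap.

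The concrete route you mention, evaluating at $\tilde\eta\in\operatorname{span}\{\sigma,\nu\}$, is the right one. But you make it conditional on such points lying near $\eta$, and the points for which \autoref{eqC} is derived never lie there. By assumption $\eta\notin\operatorname{span}\{\sigma,\nu\}$. On that plane $1/\mu(\tilde\eta)=\inner{\nu\times\tilde\eta}{\sigma}=0$, so the construction behind \autoref{eqC} breaks down there. Your step 2 also goes the wrong way. $\mu$ enters \autoref{eqC} only through $1/\mu$, $\inner{\tilde\eta}{\nu}/\mu$, $\alpha$ and $\beta$, and by \autoref{eqMu}--\autoref{eqGamma} all of these are already polynomials in $\tilde\eta$. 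Multiplying by $\mu$ would create exactly the pole on that plane.

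The missing idea is to exploit this polynomial structure. \autoref{eqC} is a polynomial identity in $\tilde\eta$ valid on an open piece of the sphere. By real-analyticity it therefore holds on all of $\mathbb S^2_{k_0}$, including the great circle $\tilde\eta_2=0$, i.e.\ $\tilde\eta\in\operatorname{span}\{\sigma,\nu\}$. The paper reaches that circle precisely this way, by continuing the identity and then setting $\tilde\eta_2=0$.

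On that circle the unknowns separate:
\begin{itemize}
\item Both Hessian terms carry the factor $1/\mu=0$, so all $A_{ij}$ drop out.
\item $\tilde\eta\times\sigma$ is a multiple of $\nu\times\sigma$, so the $\alpha$-term involves only $c_1$ and $c_2$ drops out as well.
\end{itemize}
It remains to check that the surviving coefficient of $c_1$ is not identically zero there. For instance, at $\tilde\eta=k_0\nu$ one finds $\gamma=\frac12\norm{\nu\times\sigma}^2$, $\beta=(k_0^2-k_0\inner\sigma\nu)\gamma$, and $\alpha\,\mathrm Dc(\sigma)(\tilde\eta\times\sigma)=k_0\inner\sigma\nu\,\gamma\,c_1$. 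The identity then reduces to $\frac{k_0^2}{2}\norm{\nu\times\sigma}^2c_1=0$, hence $c_1=0$.

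Your fallback of differentiating at $\tilde\eta=\eta$ encodes the same information as this continuation, but it does not by itself produce the needed independence. Varying the base point is unnecessary.
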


\begin{proof}
	By \autoref{thB}, the logarithm $c(\tilde\sigma)\coloneqq \log b(\tilde\sigma)$ defined for $\tilde\sigma\in\Sigma_{2,\chi}$ sufficiently close to $\sigma$ satisfies \autoref{eqC}. Using the orthogonal basis $\sigma$, $\frac{\nu\times\sigma}{\norm{\nu\times\sigma}^2}$, $\frac{\sigma\times(\nu\times\sigma)}{\norm{\nu\times\sigma}^2}$, we represent the vector $\tilde{ \eta}\in S_{e_3}$ in \autoref{eqC} in the form
	\[ 
	\tilde\eta = \tilde\eta_1\sigma+\tilde\eta_2\frac{\nu\times\sigma}{\norm{\nu\times\sigma}^2}+\tilde\eta_3\frac{\sigma\times(\nu\times\sigma)}{\norm{\nu\times\sigma}^2}
	\]
	for suitable coefficients $\tilde{ \eta}_1$, $\tilde{ \eta}_2$, $\tilde{ \eta}_3$. 
	Substituting the expressions for the functions $\mu$, $\alpha$, and $\beta$ from \autoref{thCoeff} in \autoref{eqC} and using \autoref{eqEtaNu}, we then have 
	\begin{equation}\label{eq:expDgl}
	\begin{aligned}
		&\frac{\tilde\eta_2}2\mathrm D^2c(\sigma)\big(\tilde\eta\times\sigma,\tilde\eta\times\sigma\big)+\tilde\eta_2(\tilde\eta_1\inner\sigma\nu+\tilde\eta_3)\mathrm D^2c(\sigma)\big(\tilde\eta\times\sigma,\nu\times\sigma\big) \\
		&\qquad+\inner\sigma\nu\gamma(\tilde\eta)\mathrm Dc(\sigma)(\tilde\eta\times\sigma)+\big(\tilde\eta_2^2-k_0^2(\tilde\eta_1-1)\gamma(\tilde\eta)\big)\mathrm Dc(\sigma)(\nu\times\sigma) = 0.
	\end{aligned}
	\end{equation}
	A direct computation gives
	\[
	\tilde\eta\times\sigma = -\tilde\eta_2\frac{\sigma\times(\nu\times\sigma)}{\norm{\nu\times\sigma}^2}+k_0^2\tilde\eta_3\frac{\nu\times\sigma}{\norm{\nu\times\sigma}^2}. 
	\]
	 We now denote the directional derivatives of $c$ by
	\[ 
	C_1\coloneqq\mathrm Dc(\sigma)(\nu\times\sigma)\text{ and }C_2\coloneqq\mathrm Dc(\sigma)(\sigma\times(\nu\times\sigma))
	 \]
	and the corresponding second derivatives by
	\begin{align*}
		&A_{11}\coloneqq\mathrm D^2c(\sigma)\big(\nu\times\sigma,\nu\times\sigma\big), \quad A_{12}\coloneqq\mathrm D^2c(\sigma)\big(\nu\times\sigma,\sigma\times(\nu\times\sigma)\big), \text{ and} \\
		&A_{22}\coloneqq\mathrm D^2c(\sigma)\big(\sigma\times(\nu\times\sigma),\sigma\times(\nu\times\sigma)\big).
	\end{align*}
	
	Substituting these expressions into \autoref{eq:expDgl} yields a scalar relation of the form
	\begin{equation}\label{eqLinDep}
		C_1h_1(\tilde\eta)+C_2h_2(\tilde\eta)+A_{11}h_{11}(\tilde\eta)+A_{12}h_{12}(\tilde\eta)+A_{22}h_{22}(\tilde\eta) = 0,
	\end{equation}
	where 
	\begin{align*}
		&h_1(\tilde\eta)\coloneqq\tilde\eta_2^2-k_0^2(\tilde\eta_1-1)\gamma(\tilde\eta)+\frac{k_0^2\inner\sigma\nu}{\norm{\nu\times\sigma}^2}\gamma(\tilde\eta)\tilde\eta_3, \\
		&h_2(\tilde\eta)\coloneqq-\frac{\inner\sigma\nu}{\norm{\nu\times\sigma}^2}\gamma(\tilde\eta)\tilde\eta_2, \\
		&h_{11}(\tilde\eta)\coloneqq\frac{k_0^4\tilde\eta_2\tilde\eta_3^2}{2\norm{\nu\times\sigma}^4}-\frac{k_0^2\tilde\eta_2\tilde\eta_3}{\norm{\nu\times\sigma}^2}(\tilde\eta_1\inner\sigma\nu+\tilde\eta_3), \\
		&h_{12}(\tilde\eta)\coloneqq-\frac{k_0^2\tilde\eta_2^2\tilde\eta_3}{\norm{\nu\times\sigma}^4}-\frac{\tilde\eta_2^2}{\norm{\nu\times\sigma}^2}(\tilde\eta_1\inner\sigma\nu+\tilde\eta_3),\text{ and} \\
		&h_{22}(\tilde\eta)\coloneqq\frac{\tilde\eta_2^3}{2\norm{\nu\times\sigma}^4}.
	\end{align*}
	
	Consider $\tilde\eta_3=\sqrt{k_0^2-\tilde\eta_1^2-\tilde\eta_2^2}$ as a function of $\tilde\eta_1$ and $\tilde\eta_2$. Then, the left-hand side of \autoref{eqLinDep} is a linear combination of five holomorphic functions in $\tilde\eta_1$ and $\tilde\eta_2$. If this equation has to hold in a neighborhood of $\eta$, it must hold for all $\tilde\eta\in S_{e_3}.$
	
	Setting $\tilde{\eta}_2=0$, we see that $h_2$, $h_{11}$, $h_{12}$, and $h_{22}$ are zero, while $h_{1}$ does not vanish. 
	Since \autoref{eqLinDep} must hold for all $\tilde{\eta}$, the remaining coefficient must therefore satisfy $C_1 =0 $
	which is equivalent to $ Db(\sigma)(\nu\times\sigma) =0$. 
\end{proof}

\begin{proposition}\label{prop:uq3}
We consider the \autoref{pr:main} for $d=3$ with the additional regularity assumption that $b\in C^2(\Sigma_{2,\chi};\C)$. Let $\eta\in S_{e_3}$ and $\sigma\in\Sigma_2$ be such that $\eta$, $\sigma$, and $\nu$ are linearly independent.

If $g\in C(\R^d;\C)$ is a continuous solution of \autoref{eq:sys_eq} and $b$ fulfills
\[\mathrm Db(\sigma)(\nu\times\sigma) \neq 0,\]
then $g(\eta-\sigma)=0$. 
\end{proposition}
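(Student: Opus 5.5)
The plan is to argue by contraposition: Lemma~\ref{thC} shows that the determinant identity \eqref{eqB} forces $\mathrm Db(\sigma)(\nu\times\sigma)=0$. So, under the hypothesis $\mathrm Db(\sigma)(\nu\times\sigma)\neq0$, \eqref{eqB} must fail in every neighbourhood of the origin. We then obtain a sequence of parameters $(w_n,\delta_n,\varepsilon_n)\to(0,0,0)$ with $\det B(w_n,\delta_n,\varepsilon_n)\neq 0$.

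Concretely, I would proceed as follows. First, set $y\coloneqq\eta-\sigma$ and $z\coloneqq\eta-H_\nu\sigma$. Since $\eta,\sigma,\nu$ are linearly independent, Lemma~\ref{lem:pointsonline} provides smooth maps $\hat\eta,\hat\sigma$ on some $U\times J\times J$. Corollary~\ref{cor:closed} then gives the system $B(w,\delta,\varepsilon)\,(g(y+w),g(y+w+\delta\nu),g(z+w),g(z+w+\varepsilon\nu))^T=0$ for all such parameters.

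Next, fix arbitrary smaller neighbourhoods $U'\subseteq U$ and $J'\subseteq J$ of $0$. Suppose $\det B$ vanished identically on $U'\times J'\times J'$. Since every entry $b(\hat\sigma(\cdot))$ is nonzero, this is exactly \eqref{eqB} on $U'\times J'\times J'$. Lemma~\ref{thB} and Lemma~\ref{thC} only use \eqref{eqB} on some neighbourhood of $0$, with the restriction to smaller domains being harmless, so they would yield $\mathrm Db(\sigma)(\nu\times\sigma)=0$. That contradicts the hypothesis. Hence there is a point $(w,\delta,\varepsilon)\in U'\times J'\times J'$ with $\det B(w,\delta,\varepsilon)\ne0$. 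For that point the four values of $g$ vanish, in particular $g(y+w)=0$.

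Finally, take a sequence of shrinking neighbourhoods to get $w_n\to0$ with $g(y+w_n)=0$. Continuity of $g$ gives $g(y)=0$.

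The main obstacle is checking that Lemmas~\ref{thB} and~\ref{thC} really apply when \eqref{eqB} holds only on an arbitrarily small neighbourhood. Lemma~\ref{thB} differentiates at $\delta=\varepsilon=0$ and takes $w=\tilde\eta-\eta$ for $\tilde\eta$ near $\eta$. Lemma~\ref{thC} then uses the analytic-continuation argument in $\tilde\eta$ to reach $\tilde\eta_2=0$. Both are local statements, valid for $\tilde\eta$ in any small neighbourhood, so this should go through, but I would state the restriction explicitly.
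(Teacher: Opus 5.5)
Your proposal is correct and follows the paper's proof essentially step for step. You set up the $4\times4$ system via Corollary~\ref{cor:closed}, use Lemmas~\ref{thB} and~\ref{thC} contrapositively to see that $\det B$ cannot vanish identically on any neighbourhood of $(0,0,0)$, extract $w_n\to0$ with $g(y+w_n)=0$, and conclude by continuity of $g$; your explicit check that those two lemmas only need \eqref{eqB} on an arbitrarily small neighbourhood is exactly the point the paper leaves implicit when it says ``not identically zero in any neighborhood.''
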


\begin{proof}
We set $y\coloneqq\eta-\sigma$ and $z\coloneqq\eta-H_\nu\sigma$. By \autoref{cor:closed}, there exist neighborhoods $U$ of the origin and $J$ of $0$ such that for every $(w,\delta,\varepsilon) \in U \times J \times J$ the function $g$  satisfies
	\begin{align*}
		B \begin{pmatrix}
			g(y+w)\\g(y+w+\delta\nu)\\g(z+w)\\g(z+w+\varepsilon\nu)
		\end{pmatrix} =0, 
	\end{align*}
	with the coefficient matrix $B= B(w,\delta,\varepsilon)$ as defined in \autoref{eq:mat}.

	By assumption, we have $\mathrm Db(\sigma)(\nu\times\sigma)\neq 0$. Combining the results  \autoref{thC} and \autoref{thB} we conclude that $(w,\delta,\varepsilon)\mapsto \det B(w,\delta,\varepsilon)$ is not identically zero in any neighborhood $\tilde U\times\tilde J\times\tilde J\subseteq U\times J\times J$ of $(0,0,0)$. Hence there exists at least one triple  $(w',\delta',\varepsilon')$ with $\det B(w',\delta',\varepsilon')\neq 0$ and therefore $g(y+w') = 0$.
	
	Now, choose a sequence $(w_n,\delta_n,\varepsilon_n)\to (0,0,0)$ with $\det B_n (w_n,\delta_n,\varepsilon_n) \neq 0$. From the previous step we know $g(y+w_n) =0$ for all $n\in\N$. Since $g$ is continuous, letting $n\to\infty$, gives
	\[
	g(y) = \lim\limits_{n\to \infty}g(y+w_n)=0,
	\]
	which completes the proof.
\end{proof}

\begin{remark}\label{rem:lindep}
	For $y\in\mathcal{Y}_2\cap\mathcal{B}^3_{2k_0}\setminus\R\nu$ there exist infinitely many representations $y = \eta-\sigma$ with $\sigma\in\Sigma_2\cap(S_{e_3}-y)$ and $\eta \in S_{e_3}$. (For points $\norm{y}=2k_0$, the intersection $\Sigma_2\cap(S_{e_3}-y)$ degenerates to a single point.) For such a representation, the vectors  $\eta$, $\sigma$, and $\nu$ are linearly independent if and only if
		\[
		\inner{\sigma}{\nu\times y} \ne  0,
		\]	
	since $\eta = y+\sigma$.  Equivalently, linear dependence occurs when $\sigma$ lies in the plane $\operatorname{span}\{y,\nu\}$. However, every $\sigma$ representing $y$ must lie in $\Sigma_2\cap(S_{e_3}-y)$, which is a circular arc lying in a plane orthogonal to $y$. This arc intersects the plane $\operatorname{span}\{y,\nu\}$ in at most two points. Therefore, there exist a whole continuum of admissible $\sigma$ for which $\eta$, $\sigma$, and $\nu$ are linearly independent. 
\end{remark}

\begin{theorem}\label{thm:uq3}	
	We consider the \autoref{pr:main} for $d=3$ with the additional assumptions that $b\in C^2(\Sigma_{2,\chi}; \C)$ and that
	\[
	\overline{\left\lbrace \sigma\in\Sigma_2\mid \nabla b(\sigma)\notin\C\pi_\sigma\nu\right\rbrace} = \Sigma_2,
	\]
	where $\pi_\sigma$ denotes the orthogonal projection onto $\sigma^\perp$ as defined in \autoref{eq:projection}.
	
	Every continuous solution $g\in C(\R^3;\C)$ of \autoref{eq:sys_eq} then fulfills $g(y) = 0 $ for all $y\in\mathcal{Y}_1\cup\mathcal{Y}_2$.
\end{theorem}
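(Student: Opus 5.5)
The plan is to reduce the theorem to \autoref{prop:uq3} on a dense set of points and then use the continuity of $g$, as in the proof of \autoref{thm:uq2}. On $\mathcal Y_1$ the claim is \autoref{lem:uqS1}, so fix $y\in\mathcal Y_2$ and an arbitrary neighborhood $U$ of $y$. I will construct a point $y'=\eta'-\sigma'\in U$ with $\eta'\in S_{e_3}$ and $\sigma'\in\Sigma_2$ such that $\eta'$, $\sigma'$, $\nu$ are linearly independent and $\mathrm Db(\sigma')(\nu\times\sigma')\ne0$. Then \autoref{prop:uq3} gives $g(y')=0$. Letting $U$ shrink produces a sequence $y'_n\to y$ with $g(y'_n)=0$, and continuity yields $g(y)=0$.

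The first step is to translate the density hypothesis into the condition of \autoref{prop:uq3}. For $\sigma\in\Sigma_2$, the radial invariance of $b$ on $\Sigma_{2,\chi}$ gives $\inner{\nabla b(\sigma)}{\sigma}=0$ (this holds for real and imaginary parts separately). So $\nabla b(\sigma)$ lies in the complexification of $\sigma^\perp$, which is two-dimensional and spanned by the orthogonal vectors $\pi_\sigma\nu$ and $\nu\times\sigma$, provided $\sigma\notin\R\nu$. Since $\nu\times\sigma\perp\pi_\sigma\nu$, the condition $\nabla b(\sigma)\notin\C\pi_\sigma\nu$ is equivalent to $\mathrm Db(\sigma)(\nu\times\sigma)=\inner{\nabla b(\sigma)}{\nu\times\sigma}\ne0$. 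Points $\sigma\in\R\nu$ are excluded from $\Sigma_2$ anyway, because there $H_\nu\sigma=-\sigma$ and so $\sigma$ and $H_\nu\sigma$ cannot both lie in $S_\omega$. Hence the set $D\coloneqq\{\sigma\in\Sigma_2\mid \mathrm Db(\sigma)(\nu\times\sigma)\ne0\}$ is dense in $\Sigma_2$. It is also relatively open, by continuity of $\nabla b$ since $b\in C^2$.

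The second step is to choose the representation. Write $y=\eta-\sigma$ with $\eta\in S_{e_3}$ and $\sigma\in\Sigma_2$. The map $(\tilde\eta,\tilde\sigma)\mapsto\tilde\eta-\tilde\sigma$ is continuous on the relatively open set $S_{e_3}\times\Sigma_2$. Therefore there is a neighborhood $W$ of $(\eta,\sigma)$ in $S_{e_3}\times\Sigma_2$ whose image lies in $U$. By density of $D$, pick $\sigma'\in D$ with $(\eta,\sigma')\in W$. Next, perturb $\eta$ to some $\eta'$ with $(\eta',\sigma')\in W$ such that $\eta'\notin\operatorname{span}\{\sigma',\nu\}$. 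This plane meets the two-dimensional hemisphere $S_{e_3}$ in a curve, so such $\eta'$ exist arbitrarily close to $\eta$. Then $\eta'$, $\sigma'$, $\nu$ are linearly independent, and $\sigma'$ and $\nu$ are themselves independent because $\sigma'\notin\R\nu$. So $y'\coloneqq\eta'-\sigma'\in U\cap\mathcal Y_2$ satisfies all hypotheses of \autoref{prop:uq3}.

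The main point requiring care is the first step, namely the exact equivalence between $\nabla b(\sigma)\notin\C\pi_\sigma\nu$ and $\mathrm Db(\sigma)(\nu\times\sigma)\neq0$. It relies on $\nabla b(\sigma)\perp\sigma$ and on $\sigma\notin\R\nu$, and these are exactly what make $\sigma^\perp$ decompose as $\R\pi_\sigma\nu\oplus\R(\nu\times\sigma)$. Once this is established, the rest is the routine density-plus-continuity argument already used for \autoref{thm:uq2}.
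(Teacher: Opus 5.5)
Your argument is correct and is essentially the paper's proof. You use $\mathrm Db(\tau)\tau=0$ to turn the hypothesis into density of $\{\tau\in\Sigma_2\mid\mathrm Db(\tau)(\nu\times\tau)\neq0\}$, apply \autoref{prop:uq3} at nearby points, and pass to the limit by continuity of $g$. The only difference is cosmetic: you get linear independence by moving $\eta$ off the plane $\operatorname{span}\{\sigma',\nu\}$, whereas the paper keeps $\eta$ fixed and picks the approximating $\tau_n\in\Sigma_2$ off $\operatorname{span}\{\eta,\nu\}$ (citing \autoref{rem:lindep}). Your explicit check that $\Sigma_2\cap\R\nu=\emptyset$ also makes the equivalence step slightly more complete than the paper's.
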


\begin{proof}
	The statement on $\mathcal{Y}_1$ follows from \autoref{lem:uqS1}.
	Let $y\in\mathcal{Y}_2$ be arbitrary and written as $y = \eta - \sigma$ for some $\eta\in S_{e_3}$ and $\sigma\in \Sigma_2$. 
	Since $\nabla b(\tau)\in\C\pi_\tau\nu$ is (because of $\mathrm Db(\tau)\tau=0$ due to $b(\tilde\chi\tau)=b(\tau)$ for all $\tilde\chi\in(1-\chi,1+\chi)$) equivalent to $\mathrm Db(\tau)(\nu\times\tau)=0$, the set
	\[
	G \coloneqq \{\tau\in\Sigma_2 \mid \mathrm Db(\tau)(\nu\times\tau)\neq 0\}
	\]
	is by assumption dense in $\Sigma_2$.  Therefore, we can construct a sequence  
	$(\tau_n)_{n\in\N}\subseteq G$ with $\tau_n\to\tau$ as $n\to\infty$, where we have for each $n$
	\[
	\mathrm Db(\tau_n)(\nu\times\tau_n)\neq 0.
	\]
	According to \autoref{rem:lindep}, we can select this sequence such that in addition $\eta$, $\tau_n$, and $\nu$  are linearly independent for all $n\in\N$.
	Hence, for $y_n\coloneqq\eta-\tau_n$ all assumptions of \autoref{prop:uq3} are satisfied at, which yields
	\[
	g(y_n)=0 \qquad \text{for all } n\in\N.
	\]
	By the continuity of $g$ we conclude
	\[
	g(y)=\lim_{n\to\infty} g(y_n) = 0.
	\]
	
\end{proof}

\section{Reconstructible Fourier coefficients in two dimensions}\label{sec:two}
Lastly, we turn to the analysis of \autoref{pr:main} the two-dimensional setting. Recall that, in this case, we introduced a graph on $\mathcal{Y}_2$, where each equation of the form \autoref{eq:homFy} corresponds to an edge connecting two vertices, see \autoref{def:graph}.

We ignore all degenerate points as they form a set of measure zero and restrict our attention to the set $\mathcal Y_2'\subseteq\mathcal Y_2$ of non-degenerate points introduced in \autoref{def:nondegY2}. The structural classification of the connected components of the associated graph, established in \autoref{lem:Zhk}, reveals that each such non-degenerate point belongs to an equation system with at most four equations involving function values of $g$ at not more than four points.

Whenever a point $y\in \mathcal{Y}_2'$ lies in a component containing fewer edges than vertices, then the associated system is underdetermined. In such cases, uniqueness of a solution cannot be expected without incorporating additional information. 

As a consequence, there remain, according to \autoref{lem:Zhk}, precisely two approaches by which uniqueness may still arise for the vertices of a connected component of a vertex in $\mathcal Y_2'$:
\begin{enumerate}
	\item[a)] A vertex $y$ also lies in $\mathcal{Y}_1$, where \autoref{lem:uqS1} ensures $g(y)=0$, implying that the function is  zero on all vertices in that component.
	\item[b)] The connected component contains four vertices and four edges such that the corresponding equation system consists of four equations for the four values at the vertices. In this case one must determine whether this $4\times4$ system admits a unique solution.
\end{enumerate}

In what follows, we examine the cases a) and b) in detail.

\subsection{Identification of graph vertices in $\mathcal{Y}_1$}
We begin with case a) which requires identifying those vertices of the graph that lie in the intersection $\mathcal{Y}_1\cap\mathcal{Y}_2$. 
 Since an element $y$ of this intersection is of the form $y=\eta_1-\sigma_1=\eta_2-\sigma_2$ for some $\eta_1,\eta_2\in S_{e_2}$, $\sigma_1\in\Sigma_1$, and $\sigma_2\in\Sigma_2$, it lies in the intersection of the two semicirles $S_{e_2}-\sigma_1$ and $S_{e_2}-\sigma_2$. Let us therefore briefly describe this set.

\begin{lemma}\label{lem:int_circles}
	Let $d=2$ and $\sigma_1, \sigma_2 \in \mathbb{S}_{k_0}^1$ with $\sigma_1\neq\sigma_2$ and consider two upper semicircles $S_{e_2}+\sigma_1$ and $S_{{e}_2}+\sigma_2$ centered at $\sigma_1$ and $\sigma_2$, respectively.  
	Then, the intersection is given by
	\[
	(S_{{e}_2}+\sigma_1) \cap (S_{{e}_2}+\sigma_2) =
	\begin{cases}
		\left\lbrace \sigma_1 + \sigma_2 \right\rbrace & \text{if } \sigma_1 , \sigma_2 \in S_{{e}_2}, \\
		\left\lbrace 0 \right\rbrace  & \text{if }  \sigma_1 , \sigma_2 \in {S}_{-{e}_2},\\
		\emptyset & \text{otherwise}.
	\end{cases}
	\]
\end{lemma}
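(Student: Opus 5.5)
Our plan is to reduce the problem to the intersection of two full circles and then decide which of the (at most two) intersection points survive the restriction to the open upper halves. A point $x$ lies in $(S_{e_2}+\sigma_1)\cap(S_{e_2}+\sigma_2)$ exactly when $x-\sigma_1$ and $x-\sigma_2$ both lie in $\Sp^1_{k_0}$ and, in addition, $\inner{x-\sigma_1}{e_2}>0$ and $\inner{x-\sigma_2}{e_2}>0$. So we first describe $(\Sp^1_{k_0}+\sigma_1)\cap(\Sp^1_{k_0}+\sigma_2)$ and then test each candidate against the two half-space conditions.

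For the full circles, note that $\sigma_1\neq\sigma_2$, so the circles are distinct. Two distinct circles of equal radius meet in at most two points. These points are symmetric with respect to the perpendicular bisector of the segment joining the centres $\sigma_1$ and $\sigma_2$. We can name both of them directly. The point $\sigma_1+\sigma_2$ lies on both circles, since $(\sigma_1+\sigma_2)-\sigma_1=\sigma_2$ and $(\sigma_1+\sigma_2)-\sigma_2=\sigma_1$ both have norm $k_0$. The origin lies on both circles as well, since $0-\sigma_j=-\sigma_j$ has norm $k_0$. If $\sigma_1=-\sigma_2$, these two points coincide and the circles are tangent at $0$. In every case the intersection of the full circles is $\{\sigma_1+\sigma_2,\,0\}$, with no further points, by the at-most-two count.

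Next we test each candidate against the strict conditions.
\begin{itemize}
\item For $x=\sigma_1+\sigma_2$, the conditions read $\inner{\sigma_2}{e_2}>0$ and $\inner{\sigma_1}{e_2}>0$, that is, $\sigma_1,\sigma_2\in S_{e_2}$.
\item For $x=0$, the conditions read $\inner{-\sigma_1}{e_2}>0$ and $\inner{-\sigma_2}{e_2}>0$, that is, $\sigma_1,\sigma_2\in S_{-e_2}$.
\end{itemize}
These two cases are mutually exclusive, so at most one candidate survives. This gives exactly the three cases of the statement: $\{\sigma_1+\sigma_2\}$, $\{0\}$, or $\emptyset$ otherwise. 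The "otherwise" case also covers centres on the horizontal axis, where the strict inequalities fail for both candidates.

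The only real obstacle is justifying that no third intersection point exists in the degenerate configurations. We handle this with the elementary fact that two distinct circles meet in at most two points. Concretely, subtracting the equations $\norm{x-\sigma_1}^2=k_0^2$ and $\norm{x-\sigma_2}^2=k_0^2$ gives the line $\inner{x}{\sigma_1-\sigma_2}=0$, using $\norm{\sigma_1}=\norm{\sigma_2}$. A line meets a circle in at most two points, and we have already exhibited them. No case analysis beyond this is needed.
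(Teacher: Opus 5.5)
Your proof is correct and follows the paper's argument. You subtract the two circle equations to get $\inner{x}{\sigma_1-\sigma_2}=0$, identify $0$ and $\sigma_1+\sigma_2$ as the only candidates, and let the strict half-plane conditions decide which survives. You also treat the tangent case $\sigma_1=-\sigma_2$ explicitly and correctly use $k_0^2$ where the paper's proof writes $1$.

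One harmless slip in an aside you never use: the intersection points lie on the perpendicular bisector of the segment from $\sigma_1$ to $\sigma_2$. They are mirror images across the line through the centres, not across the bisector.
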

\begin{proof}
	Let $y \in (S_{{e}_2}+\sigma_1) \cap (S_{{e}_2}+\sigma_2)$ and $\sigma_1, \sigma_2 \in \mathbb{S}_{k_0}^1$ with $\sigma_1\neq\sigma_2$. That is, we have $\norm{y-\sigma_1}^2 = 1$ and $\norm{y-\sigma_2}^2 = 1$ so that
	\begin{align*}
		\norm{y}^2 - 2\inner{y}{\sigma_1} + 1 = \norm{y}^2 - 2\inner{y}{\sigma_2} + 1, \quad \text{which is equivalent to} \quad \inner{y}{\sigma_1-\sigma_2} = 0.
	\end{align*}
	The only two points that satisfy both this orthogonality constraint and the condition $\norm{y-\sigma_1}=1$ are the origin, $y=0$, and the sum of the centers, $y=\sigma_1+\sigma_2$. 
	
	We check when these two points satisfy the required semicircle conditions, namely $y-\sigma_1 \in S_{e_2}$ and $y-\sigma_2 \in S_{e_2}$.
	The point $y=0$ satisfies these conditions if and only if $-\sigma_1, -\sigma_2 \in S_{e_2}$, which is equivalent to $\sigma_1, \sigma_2 \in S_{-e_2}$.
	The point $y=\sigma_1+\sigma_2$ satisfies these conditions if and only if $\sigma_1,\sigma_2 \in S_{e_2}$.
	
	If neither of these two distinct conditions is met, the intersection is empty. 
\end{proof}

\begin{figure}[t]
	\begin{subfigure}{0.45\textwidth}
		\centering
		\includegraphics{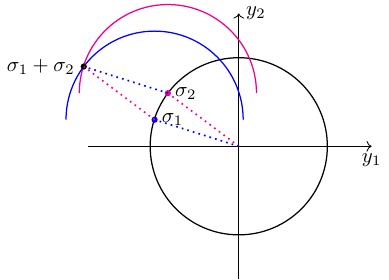}
		\caption{$\sigma_1, \sigma_2 \in S_{e_2}$ }
		\label{subfig:uppercirc}
	\end{subfigure}\hfill
	\begin{subfigure}{0.45\textwidth}
		\includegraphics{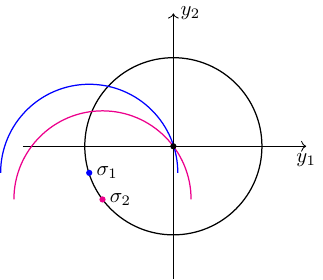}
		\caption{$\sigma_1, \sigma_2 \in S_{-e_2}$}
		\label{subfig:lowercirc}
	\end{subfigure}
	\caption{Illustration of the intersection $(\sigma_1+S_{e_2})\cap (\sigma_2+S_{e_2})$ for different positions of the centers. If both centers lie on the upper semicircle, the intersection is given by their sum; otherwise, the intersection lies in the origin.}
	\label{fig:int_circles}
\end{figure}

See \autoref{fig:int_circles} for an illustration of the intersections between two upper semicircles corresponding to different center positions.

\begin{lemma}\label{lem:intY}
	We consider the \autoref{pr:main} for $d=2$. The intersection of the subsets $\mathcal{Y}_1$ and $\mathcal{Y}_2$ then fulfills
	\[
	(\mathcal{Y}_1\cap \mathcal{Y}_2)\setminus\{0\} = \left\lbrace \eta -\sigma \mid \eta\in (-\Sigma_1)\cap S_{e_2}, \ \sigma \in\Sigma_2\cap S_{-e_2}\right\rbrace.
	\]
\end{lemma}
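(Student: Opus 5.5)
We use \autoref{lem:int_circles}, written with centers in $-\mathbb S^1_{k_0}$. A point $y\ne0$ lies in $\mathcal Y_1\cap\mathcal Y_2$ if and only if $y\in(S_{e_2}-\sigma_1)\cap(S_{e_2}-\sigma_2)$ for some $\sigma_1\in\Sigma_1$ and $\sigma_2\in\Sigma_2$. Since $\Sigma_1\cap\Sigma_2=\emptyset$, we always have $\sigma_1\neq\sigma_2$, so $-\sigma_1\ne-\sigma_2$ and \autoref{lem:int_circles} applies with centers $-\sigma_1,-\sigma_2$.

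\emph{Inclusion ``$\subseteq$''.} Take $y\ne0$ in the intersection. By \autoref{lem:int_circles}, since $y\neq 0$, we must be in the first case: $-\sigma_1,-\sigma_2\in S_{e_2}$, that is, $\sigma_1,\sigma_2\in S_{-e_2}$, and $y=-\sigma_1-\sigma_2$. Set $\eta\coloneqq-\sigma_1$ and $\sigma\coloneqq\sigma_2$. Then $\eta\in(-\Sigma_1)\cap S_{e_2}$, $\sigma\in\Sigma_2\cap S_{-e_2}$, and $y=\eta-\sigma$, as required.

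\emph{Inclusion ``$\supseteq$''.} Let $\eta\in(-\Sigma_1)\cap S_{e_2}$ and $\sigma\in\Sigma_2\cap S_{-e_2}$, and set $y\coloneqq\eta-\sigma$.
\begin{itemize}
\item $y\in\mathcal Y_2$ directly from the definition \autoref{eq:Y1_Y2}, since $\eta\in S_{e_2}$ and $\sigma\in\Sigma_2$.
\item For $\mathcal Y_1$, write $y=(-\sigma)-(-\eta)$. Here $-\eta\in\Sigma_1$ by assumption, and $-\sigma\in S_{e_2}$ because $\sigma\in S_{-e_2}$. Hence $y\in\mathcal Y_1$.
\item $y\neq0$: otherwise $\eta=\sigma$, which would put $\eta$ in both $S_{e_2}$ and $S_{-e_2}$, which is impossible.
\end{itemize}

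The only subtle point is the case distinction of \autoref{lem:int_circles}. We must exclude the origin, which is the only possible intersection point in the second case and is precisely why $\{0\}$ is removed in the statement. We must also keep careful track of the sign flips between $\sigma_1$ and the center $-\sigma_1$.
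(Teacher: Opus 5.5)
Your proof is correct and follows the paper's route. Like the paper, you write $\mathcal Y_1\cap\mathcal Y_2$ as the union of the pairwise intersections $(S_{e_2}-\sigma_1)\cap(S_{e_2}-\sigma_2)$ and read off the nonzero points from \autoref{lem:int_circles} with centers $-\sigma_1,-\sigma_2$. You are in fact slightly more careful than the paper: you justify $\sigma_1\neq\sigma_2$ via $\Sigma_1\cap\Sigma_2=\emptyset$, check the reverse inclusion directly through $\eta-\sigma=(-\sigma)-(-\eta)$, and verify that these points are nonzero.
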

\begin{proof}
We get by definition of the sets $\mathcal Y_1$ and $\mathcal Y_2$ in \autoref{eq:Y1_Y2} that
\[ \mathcal Y_1\cap\mathcal Y_2 = \bigcup_{\sigma_1\in\Sigma_1}\bigcup_{\sigma_2\in\Sigma_2}\big((S_{e_2}-\sigma_1)\cap(S_{e_2}-\sigma_2)\big). \]
Plugging in the expression for the intersection from \autoref{lem:int_circles}, we obtain
\[ \mathcal Y_1\cap\mathcal Y_2 = \begin{cases}
\{-\sigma_1-\sigma_2\mid\sigma_1\in\Sigma_1\cap S_{-e_2},\,\sigma_2\in\Sigma_2\cap S_{-e_2}\} &\text{if }\Sigma_1\cap S_{e_2}=\emptyset\text{ or }\Sigma_2\cap S_{e_2}=\emptyset, \\
\{-\sigma_1-\sigma_2\mid\sigma_1\in\Sigma_1\cap S_{-e_2},\,\sigma_2\in\Sigma_2\cap S_{-e_2}\}\cup\{0\}&\text{otherwise}.
\end{cases} \]
\end{proof}
Having described the intersection $\mathcal{Y}_1\cap\mathcal{Y}_2$, we can now characterize which vertices of the graph $(\mathcal{Y}_2, E)$ lie within $\mathcal{Y}_1$. 

\begin{proposition}\label{prop:vertexY1}
We consider the \autoref{pr:main} for $d=2$. Let $(\mathcal{Y}_2,E)$ be the graph associated with the equation system in \autoref{eq:homFy} and let $(C,\ E_C)$  be a connected component of this graph containing a non-degenerate point from $\mathcal Y_2'$. 

If we have $C\cap\mathcal{Y}_1\ne\emptyset$, then $(C,\ E_C)$ is of the form
\begin{equation}\label{eq:CY1}
\parbox[t]{0.3\textwidth}{\mbox{}\\[-2ex]\includegraphics{connectedComponents_1.pdf}}\text{ with }\eta\in (-\Sigma_1)\cap S_{e_2}\text{ and }\sigma\in\Sigma_2\cap S_{-e_2}.
\end{equation}
And we get that $C\subseteq\mathcal Y_1$ if and only if we additionally have $H_\nu\sigma\in S_{-e_2}$.
\end{proposition}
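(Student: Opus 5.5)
The plan is to combine the description of $(\mathcal Y_1\cap\mathcal Y_2)\setminus\{0\}$ from \autoref{lem:intY} with the classification of components in \autoref{lem:Zhk}. The classification is carried out with respect to a reference representation $\eta-\sigma$ of one vertex, so the main task is to show that this reference can be taken to be the representation supplied by \autoref{lem:intY}.

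\textbf{Step 1: a representation of the vertex in $\mathcal Y_1$.} Let $y\in C\cap\mathcal Y_1$. All vertices of $C$ lie among the four points of \autoref{lem:distPts} built from a non-degenerate vertex, so $y\neq0$. By \autoref{lem:intY} we can write $y=\eta-\sigma$ with $\eta\in(-\Sigma_1)\cap S_{e_2}$ and $\sigma\in\Sigma_2\cap S_{-e_2}$. Because $\Sigma_1\cap\Sigma_2=\emptyset$, we have $\eta\notin-\Sigma_2$.

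\textbf{Step 2: the component has two vertices and one edge.} Use this $(\eta,\sigma)$ as the reference representation in \autoref{lem:Zhk}. The condition $\eta\notin(-\Sigma_2)$ is exactly the first case there. The same conclusion also follows directly from \autoref{lem:By2}: the second neighbour $H_\nu\eta-\sigma$ would require $\eta\in-\Sigma_2$, and the edge $\{z_1,z_3\}$ would as well. Hence $C=\{\eta-\sigma,\eta-H_\nu\sigma\}$ with one edge, which is the form in \autoref{eq:CY1}.

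A subtlety arises because \autoref{lem:Zhk} was stated for components containing some non-degenerate vertex, and this need not be $y$ itself. I would handle it by re-running the neighbour computation of its proof directly from $y=\eta-\sigma$. This is legitimate because $C$ is a set of vertices determined by the edges, independent of which representation is chosen. The distinctness of the points is inherited from the non-degenerate vertex via \autoref{lem:distPts}, since $C$ is a subset of those four points.

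\textbf{Step 3: when the second vertex lies in $\mathcal Y_1$.} Since $y\in\mathcal Y_1$, we have $C\subseteq\mathcal Y_1$ if and only if $z\coloneqq\eta-H_\nu\sigma\in\mathcal Y_1$.

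If $H_\nu\sigma\in S_{-e_2}$, then $H_\nu\sigma\in\Sigma_2\cap S_{-e_2}$, because $\Sigma_2$ is $H_\nu$-invariant by \autoref{def:Sigma}. Together with $\eta\in(-\Sigma_1)\cap S_{e_2}$ and $z\neq0$, \autoref{lem:intY} then gives $z\in\mathcal Y_1$.

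Conversely, suppose $z\in\mathcal Y_1$. By \autoref{lem:intY}, $z=\eta'-\sigma'$ with $\eta'\in(-\Sigma_1)\cap S_{e_2}$ and $\sigma'\in\Sigma_2\cap S_{-e_2}$. As in the proof of \autoref{lem:By2}, the circles $\Sp^1_{k_0}$ and $\Sp^1_{k_0}-z$ meet only in $\{H_\nu\sigma,-\eta\}$, so $\sigma'\in\{H_\nu\sigma,-\eta\}$. The choice $\sigma'=-\eta$ would force $-\eta\in\Sigma_2$, contradicting $\eta\in-\Sigma_1$. Hence $\sigma'=H_\nu\sigma$, and therefore $H_\nu\sigma\in S_{-e_2}$.

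\textbf{Main obstacle.} The expected difficulty is the bookkeeping of non-unique representations $y=\eta-\sigma$ in two dimensions. Each point has at most two representations, namely $(\eta,\sigma)$ and $(-\sigma,-\eta)$, and the argument must make sure the component is classified with a representation satisfying the hypotheses of \autoref{lem:Zhk}. The circle-intersection argument from \autoref{lem:By2}, combined with $\Sigma_1\cap\Sigma_2=\emptyset$, should dispose of this in each instance.
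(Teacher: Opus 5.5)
Your proof is correct and follows essentially the paper's route. \autoref{lem:intY} supplies $y=\eta-\sigma$ with $\eta\in(-\Sigma_1)\cap S_{e_2}$, which (as $\Sigma_1\cap\Sigma_2=\emptyset$) excludes the larger components of \autoref{lem:Zhk}, and \autoref{lem:intY} again decides when $\eta-H_\nu\sigma\in\mathcal Y_1$; your neighbour computation via \autoref{lem:By2} and your circle-intersection argument for the converse just make explicit the uniqueness-of-representation details (and $y\neq0$) that the paper leaves implicit.
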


\begin{proof}
Let $y\in C\cap\mathcal Y_1$. According to \autoref{lem:intY}, we can then find $\eta\in(-\Sigma_1)\cap S_{e_2}$ and $\sigma\in\Sigma_2\cap S_{-e_2}$ with $y=\eta-\sigma$.

By \autoref{lem:Zhk} we know that every connected component including a point from $\mathcal Y_2'$ with more than two vertices consists only of points of the form $\tilde\eta-\tilde\sigma$ with $\tilde\eta\in(-\Sigma_2)$ and $\tilde\sigma\in\Sigma_2$. Since $y\in\mathcal{Y}_1$, it cannot belong to such a component. Hence, the connected component $C$ must consist of exactly two vertices and we get \autoref{eq:CY1}.

Moreover, we have again by \autoref{lem:intY} that $\eta-H_\nu\sigma\in\mathcal Y_1$ is equivalent to $\eta\in(-\Sigma_1)\cap S_{e_2}$ and $H_\nu\sigma\in\Sigma_2\cap S_{-e_2}$. Since the condition on $\eta$ is already satisfied if we impose $\eta-\sigma\in\mathcal Y_1$, we obtain that $C\subseteq\mathcal{Y}_1$ if and only if $H_\nu\sigma\in S_{-e_2}$.
\end{proof}

\autoref{prop:vertexY1} yields the following conclusions:
first, for those connected components $C$ which are neither of the form as in \autoref{eq:CY1} nor a subset of the degenerate point set $\mathcal Y_2\setminus\mathcal Y_2'$, the system associated with that component is indeed underdetermined. In this case, there therefore exists for every $y\in C$ a solution $g\colon \R^2\to\C$ of \autoref{eq:sys_eq} with $g(y)\neq 0$.  Second, we can identify a subset $\tilde{\mathcal{Y}}\subseteq \mathcal{Y}_2\setminus\mathcal{Y}_1$ on which the system admits only the trivial solution. More precisely,
\begin{equation}\label{eq:tildeY}
\tilde{\mathcal{Y}}\coloneqq \left\lbrace \eta -H_\nu\sigma\mid \eta\in (-\Sigma_1)\cap S_{e_2},\ \sigma\in\Sigma_2\cap S_{-e_2},\ H_\nu\sigma\notin S_{-e_2} \right\rbrace.
\end{equation}

\subsection{Connected components with four edges}
We now proceed with case b). Here we consider those points in $\mathcal{Y}_2'$ that belong to connected components containing four vertices and four edges.  These components form a closed subsystem of \autoref{eq:homFy}, which, by \autoref{prop:vertexY1}, are completely isolated from points in $\mathcal{Y}_1$. We will now show that, despite being closed, these subsystems are still underdetermined and therefore admit non-trivial solutions.
\begin{proposition}\label{thm:circle_system}
We consider the \autoref{pr:main} for $d=2$. Let $(\mathcal{Y}_2,E)$ be the graph associated with \autoref{eq:homFy}. Consider a connected component $(C,\ E_C)$ of this graph containing a vertex from the set $\mathcal Y_2'$ of non-degenerate points of $\mathcal Y_2$ of the form 
\[ \parbox[t]{0.3\textwidth}{\mbox{}\\[-2ex]\includegraphics{connectedComponents_4.pdf}} \quad\text{for some }\eta\in(-\Sigma_2)\cap S_{e_2}\text{ and }\sigma\in\Sigma_2\cap S_{-e_2}. \]
Then, for every $y\in C$, there exists a solution $g\colon \R^2\to \C$ of \autoref{eq:sys_eq} with $g(y)\neq 0$.
\end{proposition}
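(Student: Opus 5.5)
The plan is to show that the $4\times4$ system on such a component has rank at most three. Concretely, the four equations turn out to be consistent for every choice of $g(y)$. We then extend by zero outside $C$.

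Write $y=\eta-\sigma$, $z_1=\eta-H_\nu\sigma$, $z_2=H_\nu\eta-\sigma$, $z_3=H_\nu\eta-H_\nu\sigma$. By \autoref{lem:distPts} these points are distinct. The first step is a symmetry observation. Since $H_\nu$ preserves $\Sigma_2$, the definition \autoref{eq:defb} gives
\[ b(H_\nu\tau)=\frac{a(H_\nu\tau)}{a(\tau)}=\frac1{b(\tau)}\quad\text{for all }\tau\in\Sigma_2. \]
Hence any equation obtained from the representation of a point with $\tau$ coincides, after multiplication by $b(\tau)$, with the one obtained from the reflected representation with $H_\nu\tau$. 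So each edge of the graph carries effectively only one independent equation.

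The second step is to read off these four equations from the representations used in \autoref{lem:By2} and \autoref{lem:Zhk}, using that $-\eta\in\Sigma_2$ and $-\sigma,-H_\nu\sigma\in S_{e_2}$ under the stated hypotheses.
\begin{itemize}
\item The edge $\{y,z_1\}$ comes from the representation $(\eta,\sigma)$ and gives $b(\sigma)g(y)+g(z_1)=0$.
\item The edge $\{y,z_2\}$ comes from $y=(-\sigma)-(-\eta)$ and gives $b(-\eta)g(y)+g(z_2)=0$.
\item The edge $\{z_1,z_3\}$ comes from $z_1=(-H_\nu\sigma)-(-\eta)$ and gives $b(-\eta)g(z_1)+g(z_3)=0$.
\item The edge $\{z_2,z_3\}$ comes from $z_2=H_\nu\eta-\sigma$ and gives $b(\sigma)g(z_2)+g(z_3)=0$.
\end{itemize}
By \autoref{lem:By2} these, together with their $H_\nu$-reflected duplicates, are all equations involving points of $C$.

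Propagating along the two paths from $y$ to $z_3$ gives the same value both ways:
\[ g(z_3)=b(-\eta)b(\sigma)g(y)\quad\text{along }y,z_1,z_3,\qquad g(z_3)=b(\sigma)b(-\eta)g(y)\quad\text{along }y,z_2,z_3. \]
So the cycle closes consistently. We may therefore set $g(y)=1$, $g(z_1)=-b(\sigma)$, $g(z_2)=-b(-\eta)$, $g(z_3)=b(\sigma)b(-\eta)$, and $g=0$ elsewhere. All four values are nonzero since $b\neq0$, so this gives $g(x)\ne0$ at every vertex $x\in C$.

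It then remains to check that this $g$ solves all of \autoref{eq:sys_eq}. The first line holds because $C\cap\mathcal Y_1=\emptyset$ by \autoref{prop:vertexY1}, as $C$ is not of the form \autoref{eq:CY1}. For the second line, any equation \autoref{eq:homFy} involving a point of $C$ couples it only to a neighbor within $C$, since $C$ is a connected component. Those equations are the four above or their reflected duplicates, which are satisfied. Every other equation involves only zero values.

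The main obstacle I expect is the careful bookkeeping in identifying exactly which coefficients $b(\cdot)$ sit on each edge, together with verifying, via \autoref{lem:By2}, that no further equations (e.g.\ from degenerate representations) touch $C$. The reciprocity $b(H_\nu\tau)b(\tau)=1$ is what makes duplicate equations harmless.
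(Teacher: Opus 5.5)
Your proposal is correct and follows essentially the same route as the paper. The paper writes the same four edge equations as a $4\times4$ system $B_{\eta,\sigma}$ and exhibits the kernel vector $(1,-b(\sigma),b(\sigma)b(-\eta),-b(-\eta))$, which is exactly what your propagation around the cycle produces. Two steps the paper uses only tacitly are made explicit in your version: the reciprocity $b(H_\nu\tau)=1/b(\tau)$, which makes the reflected duplicate equations harmless, and the check via \autoref{prop:vertexY1} that extending by zero respects the first line of \autoref{eq:sys_eq}.
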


\begin{proof}

The equations associated with the vertices in $C$ can be written as the linear system
	\begin{align*}
		B_{\eta,\sigma}
		\begin{pmatrix}
			g(\eta-\sigma)\vspace*{1mm}\\ 
			g(\eta -H_\nu\sigma )\vspace*{1mm}\\
			g(H_\nu \eta- H_\nu\sigma)\vspace*{1mm}\\
			g(H_\nu\eta-\sigma) 
		\end{pmatrix} = 0
		 \quad \text{with}\quad 	B_{\eta,\sigma}\coloneqq \begin{pmatrix}
		b\left(\sigma\right) & 1 &0 &0\vspace*{1mm} \\
		b\left( -\eta\right) & 0& 0& 1\vspace*{1mm} \\
		0 & 0& 1& b\left( \sigma\right)\vspace*{1mm} \\
		0& b\left( -\eta\right)& 1 & 0
		\end{pmatrix}.
	\end{align*}
	 A direct computation shows that the kernel of $B_{\eta,\sigma}$ is one-dimensional and given by 
	 \[
	 \ker B_{\eta,\sigma} = \operatorname{span}\left\lbrace \begin{pmatrix}
	 	1\\ -b(\sigma) \\ b(\sigma)b(-\eta)\\-b(-\eta)
	 \end{pmatrix}\right\rbrace. 
	 \]
	 In particular, we can find a solution $g\colon\R^2\to\C$ of \autoref{eq:sys_eq} with $g(y)\neq 0$ for all $y\in C$.
\end{proof}

Finally, we combine \autoref{lem:uqS1}, \autoref{prop:vertexY1}, and \autoref{thm:circle_system} to answer the question of reconstructibility of the Fourier coefficients on $\mathcal{Y}_2$ in two dimensions. For simplicity, we exclude the degenerate points in $\mathcal Y_2\setminus\mathcal Y_2'$, since these sets have measure zero. Accordingly, the result is formulated almost everywhere.
\begin{theorem}\label{thm:uq1}
We consider the \autoref{pr:main} for $d=2$.	Let $\tilde{\mathcal{Y}}$ be defined as in \autoref{eq:tildeY}.
\begin{enumerate}
\item
Then, we have for every solution $g\colon\R^2\to \C$ of \autoref{eq:sys_eq} that
\[
g=0 \quad \text{almost everywhere on} \ \mathcal{Y}_1\cup \tilde{\mathcal{Y}}.
\]
\item
Conversely, there exists for almost every $y\in\mathcal{Y}_2\setminus(\mathcal{Y}_1\cup \tilde{\mathcal{Y}})$ a solution $g\colon\R^2\to \C$ of \autoref{eq:sys_eq} with $g(y)\neq 0$.
\end{enumerate}
\end{theorem}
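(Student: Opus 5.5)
The plan is to work component by component on the graph $(\mathcal Y_2,E)$. Away from a null set, the system \autoref{eq:homFy} splits into independent finite subsystems, one for each connected component. First I fix the exceptional set. The degenerate set $\mathcal Y_2\setminus\mathcal Y_2'$ has measure zero: it consists of $\mathcal Y_2\cap\mathbb S^1_{2k_0}$, the lines $\nu^\perp$ and $\R\nu$, and the set $S$, which is a finite union of translated arcs $S_{e_2}-\sigma$ with $\sigma\in\Sigma_2\cap\nu^\perp$ and $\eta-\Sigma_2$ with $\eta\in S_{e_2}\cap\nu^\perp$. I also have to discard the degenerate points whose components meet $\mathcal Y_2'$. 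By the proof of \autoref{lem:Zhk}, every component containing a point of $\mathcal Y_2'$ lies in $N_y=\{y,\eta-H_\nu\sigma,H_\nu\eta-\sigma,H_\nu\eta-H_\nu\sigma\}$. Each of these points is the image of $y$ under one of the maps $y\mapsto y\mp 2\inner\sigma\nu\nu$ or $y\mapsto H_\nu y$, restricted to the arc parametrisation, and these maps are locally bi-Lipschitz. So the set $D$ of points of $\mathcal Y_2'$ that share a component with a degenerate point is a null set, and I work on $\mathcal Y_2'\setminus D$.

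For part 1, \autoref{lem:uqS1} already gives $g=0$ on $\mathcal Y_1$. Now take $y\in\tilde{\mathcal Y}\cap\mathcal Y_2'$, so $y=\eta-H_\nu\sigma$ with $\eta\in(-\Sigma_1)\cap S_{e_2}$, $\sigma\in\Sigma_2\cap S_{-e_2}$ and $H_\nu\sigma\notin S_{-e_2}$. The point $y'=\eta-\sigma$ lies in $\mathcal Y_1\cap\mathcal Y_2$ by \autoref{lem:intY}, and $y\in F_{y'}$ by \autoref{lem:By2}. Hence \autoref{eq:homFy} gives $0=b(\sigma)g(y')+g(y)=g(y)$. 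If $y'$ happens to be degenerate, I instead use \autoref{lem:Zhk}: the component of $y$ still has the form \autoref{eq:CY1}, which again gives $g(y)=0$. Either way $g=0$ on $\tilde{\mathcal Y}$ outside a null set.

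For part 2, take $y\in\mathcal Y_2'\setminus(\mathcal Y_1\cup\tilde{\mathcal Y}\cup D)$ and let $C$ be its component. By \autoref{lem:Zhk}, $C$ is one of the four listed types. If $C$ contains no point of $\mathcal Y_1$, I define $g$ to be $0$ outside $C$ and choose values on $C$ by hand:
\begin{itemize}
\item For the path-type components (types 1 to 3, fewer edges than vertices), I set $g(y)=1$ and propagate along the edges using $g(z)=-b(\sigma)g(y')$. The graph is a tree, so there is no conflict. Each edge carries exactly one equation, because $C_{y,\lambda}$ is a single point for non-degenerate $y$ in $d=2$.
\item For the four-cycle (type 4), I take the kernel vector from \autoref{thm:circle_system}.
\end{itemize}
If instead $C\cap\mathcal Y_1\neq\emptyset$, then \autoref{prop:vertexY1} says $C=\{\eta-\sigma,\eta-H_\nu\sigma\}$ with $\eta\in(-\Sigma_1)\cap S_{e_2}$ and $\sigma\in\Sigma_2\cap S_{-e_2}$. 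If $H_\nu\sigma\in S_{-e_2}$, then $C\subseteq\mathcal Y_1$, so $y\in\mathcal Y_1$, which is excluded. Otherwise $C\setminus\mathcal Y_1=\{\eta-H_\nu\sigma\}\subseteq\tilde{\mathcal Y}$, which is also excluded. So this case cannot occur.

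It remains to check that the $g$ constructed in part 2 satisfies all of \autoref{eq:sys_eq}. The first equation only involves points of $\mathcal Y_1$, where $g=0$ because $C\cap\mathcal Y_1=\emptyset$. Every second-type equation $b(\sigma)g(\eta-\sigma)+g(\eta-H_\nu\sigma)=0$ couples two points of the same component, so it holds either on $C$ by construction or trivially as $0=0$. The main obstacle is the bookkeeping of measure-zero sets: I must verify that $\tilde{\mathcal Y}$ and $\mathcal Y_1$ are described correctly up to null sets, and that the null sets removed via the maps of the coupling structure really have measure zero. I would handle the latter with the local Lipschitz parametrisation $(\eta,\sigma)\mapsto\eta-\sigma$ together with the fact that preimages of curves under these diffeomorphisms are curves.
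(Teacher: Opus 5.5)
Your route is the paper's: decompose along the components of \autoref{lem:Zhk}, obtain part~1 from \autoref{lem:uqS1} and the edge joining $\eta-\sigma\in\mathcal Y_1$ to $\eta-H_\nu\sigma\in\tilde{\mathcal Y}$, and for part~2 exclude components meeting $\mathcal Y_1$ via \autoref{prop:vertexY1}. On the remaining components you write down a nonzero solution (propagation along the tree, resp.\ the kernel vector of \autoref{thm:circle_system}).

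However, one claim is false, and your final verification is incomplete because of it: an edge does not carry exactly one equation. Since $\Sigma_2$ is $H_\nu$-invariant, \autoref{eq:homY2} with $\sigma$ replaced by $H_\nu\sigma$ gives a second equation on the same edge,
\[
b(H_\nu\sigma)\,g(\eta-H_\nu\sigma)+g(\eta-\sigma)=0 .
\]
This is the equation seen from the other endpoint $z=\eta-H_\nu\sigma$, with $H_\nu\sigma\in C_{z,-\inner\sigma\nu}$. Your singleton argument for $C_{y,\lambda}$ only covers one orientation. Your values $g(z)=-b(\sigma)g(y')$ satisfy this reverse equation if and only if $b(\sigma)\,b(H_\nu\sigma)=1$. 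So ``holds on $C$ by construction'' is not justified for these equations, neither on the paths nor on the four-cycle.

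The missing ingredient is this reciprocity. It follows from the definition $b(\sigma)=a(\sigma)/a(H_\nu\sigma)$ in \autoref{eq:defb}, which makes the two equations on each edge proportional. It does not follow from the bare hypotheses of \autoref{pr:main}. For example, with $b\equiv2$ every edge gives a $2\times2$ system with determinant $b(\sigma)b(H_\nu\sigma)-1\neq0$, so $g$ vanishes on every non-degenerate component and part~2 fails. State and use this identity and your proof is complete. The paper's count ``equations $=$ edges'' and the $4\times4$ matrix in \autoref{thm:circle_system} rely on it tacitly in the same way.

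Two simplifications to your bookkeeping:
\begin{itemize}
\item Your part~1 argument works at every point of $\tilde{\mathcal Y}$. Non-degeneracy of $y'=\eta-\sigma$ is never used, and $y'\in\mathcal Y_1$ holds directly via $\eta'=-\sigma\in S_{e_2}$ and $-\eta\in\Sigma_1$. So the fallback via \autoref{lem:Zhk} is unnecessary.
\item One checks that all vertices in the component of a point of $\mathcal Y_2'$ lie again in $\mathcal Y_2'$, so your exceptional set $D$ is actually empty.
\end{itemize}
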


Thus, in contrast to the higher-dimensional case, the unique reconstructibility of the Fourier transform of the scattering potential on $\mathcal{Y}_2$ fails in two dimensions, and $\tilde{\mathcal{Y}}$ is the largest region outside $\mathcal{Y}_1$ on which only the trivial solution of \autoref{eq:sys_eq} exists. In \cite{ElbNauSch26_preprint} the resulting regions $\mathcal Y_1$, $\mathcal{Y}_2$, and $\tilde{\mathcal{Y}}$ are visualized for different scanning geometries.

\section{Conclusion}
In this article, we analyzed the Fourier diffraction relation arising in the extension of diffraction tomography to general beam scanning geometries, as introduced in \cite{ElbNauSch26_preprint}. In this setting, the Fourier diffraction relation may couple two Fourier coefficients of the objects scattering potential through a single measurement, which raises the question of whether these coefficients can be uniquely recovered.

For dimensions $d \geq 3$, we proved that all Fourier coefficients appearing in the measurements can, under mild assumptions on the Herglotz density $a \colon \Sp_{k_0}^{d-1} \to \mathbb{C}$, be uniquely determined from the data. In two dimensions, however, only a subset of these Fourier coefficients is uniquely recoverable, while the remaining coefficients cannot be reconstructed from the measurements.

Nevertheless, even in that case, the recoverable Fourier coefficients go beyond those that can be directly read off from the Fourier data of the measurements.

	\subsection*{Acknowledgments}
	%
	%
	This research was funded in whole, or in part, by the Austrian Science Fund
	(FWF) SFB 10.55776/F68 ``Tomography Across the Scales'', project F6804-N36 (Quantitative Coupled Physics Imaging). For open access purposes, the author has
	applied a CC BY public copyright license to any author-accepted manuscript
	version arising from this submission.
	The financial support by the Austrian Federal Ministry for Digital and Economic
	Affairs, the National Foundation for Research, Technology and Development and the Christian Doppler
	Research Association is gratefully acknowledged.
	\appendix
	\section{Appendix}
	In \autoref{subsec:3d} we considered \autoref{pr:main} for $d=3$ and derived a condition on the coefficient function $b\in C^2(\Sigma_{2,\chi};\C)$ ensuring that the determinant of the system matrix in \autoref{eq:mat} does not vanish.  In this context, we introduced for fixed points $\eta\in S_{e_3}$ and $\sigma\in\Sigma_2$ for which the vectors $\eta$, $\sigma$, and $\nu$ are linearly independent on sufficiently small neighborhoods $U\subseteq\R^3$ of the origin and $J\subseteq\R$ of $0$, the smooth maps
	\[ \hat\eta\colon U\times J\times J\to S_{e_3}\text{ and }\hat\sigma\colon U\times J\times J\to\Sigma_2, \]
	defined by $\hat\eta(0,0,0)=\eta$, $\hat\sigma(0,0,0)=\sigma$, and
	\begin{equation}\label{eq:defeq}
		y+w+\delta\nu = \hat\eta(w,\delta,\varepsilon)-\hat\sigma(w,\delta,\varepsilon),\qquad 
		z+w+\varepsilon\nu=\hat\eta(w,\delta,\varepsilon)-H_\nu \hat\sigma(w,\delta,\varepsilon)
	\end{equation}
	where $y\coloneqq\eta-\sigma$ and $z\coloneqq\eta-H_\nu\sigma$. 
	
	In this appendix, we collect the auxiliary material used in the derivation of the determinant condition presented in \autoref{thB} and \autoref{thC}. In particular, we perform the computation of the first and second derivatives of $\hat\sigma$, which enter the expressions in these results.
	
	In the following, we keep the notation introduced above for all lemmata and we assume that the neighborhoods $U$ and $J$ are sufficiently small such that the vectors $\nu$, $y+w+\delta\nu=\hat\eta(w,\delta,\varepsilon)-\hat\sigma(w,\delta,\varepsilon)$, and $\hat\sigma(w,\delta,\varepsilon)$ form a basis for all $w\in U$ and $\delta,\varepsilon\in J$.

	\begin{lemma}\label{thConstantSol}
		We have for all $\tilde\eta\in S_{e_3}$ with $-\eta+\tilde\eta\in U$ that
		\begin{align*}
			&\hat\sigma(-\eta+\tilde\eta,0,0)=\sigma\text{ and} \\
			&\hat\eta(-\eta+\tilde\eta,0,0)=\tilde\eta.
		\end{align*}
	\end{lemma}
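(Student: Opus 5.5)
The idea is to exhibit an explicit solution of the defining system \autoref{eq:defeq} at the parameter value $(w,\delta,\varepsilon)=(\tilde\eta-\eta,0,0)$ and then invoke the uniqueness part of the implicit function theorem used in \autoref{lem:pointsonline}. Concretely, we check that the pair $(\hat\eta,\hat\sigma)\coloneqq(\tilde\eta,\sigma)$ satisfies both equations in \autoref{eq:defeq} with $w=\tilde\eta-\eta$, $\delta=\varepsilon=0$.

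For the first equation we compute
\[ y+w = \eta-\sigma+\tilde\eta-\eta = \tilde\eta-\sigma. \]
For the second equation we compute
\[ z+w = \eta-H_\nu\sigma+\tilde\eta-\eta = \tilde\eta-H_\nu\sigma. \]
So both equations hold. Equivalently, in terms of the map $\Phi$ from the proof of \autoref{lem:pointsonline}, the first component of $\Phi(\tilde\eta,\sigma;\tilde\eta-\eta,0,0)$ is zero by the first computation. The second component is $2\inner\sigma\nu-(2\inner\sigma\nu+0-0)=0$. Moreover $\tilde\eta\in S_{e_3}$ by assumption and $\sigma\in\Sigma_2$, so the pair lies in the admissible target sets.

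The remaining step, which is the only delicate one, is to identify this explicit pair with the implicit-function solution $(\hat\eta,\hat\sigma)(\tilde\eta-\eta,0,0)$. The implicit function theorem gives uniqueness only within some neighbourhood $W$ of $(\eta,\sigma)$ in $\Sp^2_{k_0}\times\Sp^2_{k_0}$, for parameters in $U\times J\times J$. I would handle this by shrinking $U$ (as the lemma's standing convention of "sufficiently small neighbourhoods" allows) so that $\tilde\eta-\eta\in U$ forces $(\tilde\eta,\sigma)\in W$. This is possible because $\norm{(\tilde\eta,\sigma)-(\eta,\sigma)}=\norm{\tilde\eta-\eta}=\norm w$, and $\sigma$ itself is fixed. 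With that choice, uniqueness yields $\hat\eta(\tilde\eta-\eta,0,0)=\tilde\eta$ and $\hat\sigma(\tilde\eta-\eta,0,0)=\sigma$.

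As an alternative to shrinking $U$, one can use a connectedness argument. The curve $t\mapsto(\tilde\eta(t),\sigma)$, with $\tilde\eta(t)$ a path in $S_{e_3}$ from $\eta$ to $\tilde\eta$ inside $\eta+U$, solves the system continuously. The solution set near the graph of $(\hat\eta,\hat\sigma)$ is locally unique, so the set of $t$ where the explicit and implicit solutions agree is open and closed, and hence is everything.
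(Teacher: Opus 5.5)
Your proposal is correct and follows the paper's proof. Like the paper, you check directly that $(\tilde\eta,\sigma)$ satisfies the defining system at $(w,\delta,\varepsilon)=(\tilde\eta-\eta,0,0)$ and then conclude by uniqueness of the implicit-function solution. Your additional step, shrinking $U$ so that $(\tilde\eta,\sigma)$ lies in the uniqueness neighbourhood of the implicit function theorem (or alternatively using the connectedness argument), makes rigorous a point the paper leaves implicit.
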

	\begin{proof}
		By definition, the values $\hat\eta(-\eta+\tilde\eta,0,0)$ and $\hat\sigma(-\eta+\tilde\eta,0,0)$ are given as the solution of
		\begin{align*}
			&y-\eta+\tilde\eta = \hat\eta(-\eta+\tilde\eta,0,0)-  \hat\sigma(-\eta+\tilde\eta,0,0)\text{ and} \\
			&z-\eta+\tilde\eta = \hat\eta(-\eta+\tilde\eta,0,0) - H_\nu\hat\sigma(-\eta+\tilde\eta,0,0)
		\end{align*}
		with $\hat\eta(0,0,0)=\eta$ and $\hat\sigma(0,0,0)=\sigma$.
		
		Choosing $\hat\eta(-\eta+\tilde\eta,0,0)=\tilde\eta$, we see that this reduces to
		\begin{align*}
			&y = \eta-\hat\sigma(-\eta+\tilde\eta,0,0)\text{ and} \\
			&z=\eta-H_\nu\hat\sigma(-\eta+\tilde\eta,0,0),
		\end{align*}
		which has $\hat\sigma(-\eta+\tilde\eta,0,0)=\sigma$ as a solution.
	\end{proof}
	
	\begin{lemma}\label{thSigma}

		The derivatives of $\hat\sigma$ are given by
		\begin{align*}
			&\partial_\varepsilon\hat\sigma(w,\delta,\varepsilon) = \frac12v_1(w,\delta,\varepsilon), \\
			&\partial_\delta\hat\sigma(w,\delta,\varepsilon) = -\frac12v_1(w,\delta,\varepsilon)-\inner{\hat\eta(w,\delta,\varepsilon)}\nu v_2(w,\delta,\varepsilon),\text{ and} \\
			&\partial_{\delta\varepsilon}\hat\sigma(w,\delta,\varepsilon) = -\frac12v_2(w,\delta,\varepsilon)+\left(\frac14\norm{v_1(w,\delta,\varepsilon)}^2+\frac12\inner{\hat\eta(w,\delta,\varepsilon)}\nu\inner{v_1(w,\delta,\varepsilon)}{v_2(w,\delta,\varepsilon)}\right)v_3(w,\delta,\varepsilon),
		\end{align*}
		where $(v_j(w,\delta,\varepsilon))_{j=1}^3$ denotes the dual basis to $\nu$, $\hat\eta(w,\delta,\varepsilon)-\hat\sigma(w,\delta,\varepsilon)$, and $\hat\sigma(w,\delta,\varepsilon)$.
	\end{lemma}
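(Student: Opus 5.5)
We obtain the formulas by differentiating the defining relations in \autoref{eq:defeq} together with the sphere constraints $\norm{\hat\eta}^2=\norm{\hat\sigma}^2=k_0^2$, and then expanding every derivative in the dual basis. Throughout we suppress the argument $(w,\delta,\varepsilon)$. We write $\hat y\coloneqq\hat\eta-\hat\sigma=y+w+\delta\nu$, so that the dual basis is characterised by $\inner{v_1}\nu=1$, $\inner{v_2}{\hat y}=1$, $\inner{v_3}{\hat\sigma}=1$, and all other pairings vanish. Subtracting the two relations in \autoref{eq:defeq} gives $2\inner{\hat\sigma}\nu\nu=z-y+(\varepsilon-\delta)\nu$, that is,
\[ \inner{\hat\sigma}\nu=\inner\sigma\nu+\tfrac12(\varepsilon-\delta). \]
The norm conditions give $\norm{\hat y+\hat\sigma}^2=\norm{\hat\sigma}^2$, which is equivalent to
\[ \inner{\hat y}{\hat\sigma}=-\tfrac12\norm{\hat y}^2. \]

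For a first derivative $\partial\hat\sigma$ with $\partial\in\{\partial_\varepsilon,\partial_\delta\}$, the coefficients in the expansion $\partial\hat\sigma=\inner{\partial\hat\sigma}\nu v_1+\inner{\partial\hat\sigma}{\hat y}v_2+\inner{\partial\hat\sigma}{\hat\sigma}v_3$ are computed from three pieces of information. First, $\inner{\partial\hat\sigma}{\hat\sigma}=0$, since $\hat\sigma$ stays on the sphere. Second, $\inner{\partial\hat\sigma}\nu$ equals $\tfrac12$ for $\partial_\varepsilon$ and $-\tfrac12$ for $\partial_\delta$. Third, differentiating $\inner{\hat y}{\hat\sigma}=-\tfrac12\norm{\hat y}^2$ gives $\inner{\partial\hat y}{\hat\sigma}+\inner{\hat y}{\partial\hat\sigma}=-\inner{\hat y}{\partial\hat y}$. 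For $\partial_\varepsilon$ we have $\partial_\varepsilon\hat y=0$, hence $\inner{\hat y}{\partial_\varepsilon\hat\sigma}=0$ and $\partial_\varepsilon\hat\sigma=\tfrac12v_1$. For $\partial_\delta$ we have $\partial_\delta\hat y=\nu$, hence
\[ \inner{\hat y}{\partial_\delta\hat\sigma}=-\inner\nu{\hat\sigma}-\inner{\hat y}\nu=-\inner{\hat\eta}\nu, \]
which yields the stated formula for $\partial_\delta\hat\sigma$.

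For the mixed derivative we differentiate the three identities once more. We have $\inner{\partial_{\delta\varepsilon}\hat\sigma}\nu=0$, because the $\nu$-component of $\hat\sigma$ is affine in $(\delta,\varepsilon)$. Differentiating $\inner{\partial_\varepsilon\hat\sigma}{\hat\sigma}=0$ in $\delta$ gives
\[ \inner{\partial_{\delta\varepsilon}\hat\sigma}{\hat\sigma}=-\inner{\partial_\varepsilon\hat\sigma}{\partial_\delta\hat\sigma}=\tfrac14\norm{v_1}^2+\tfrac12\inner{\hat\eta}\nu\inner{v_1}{v_2}. \]
Differentiating $\inner{\hat y}{\partial_\varepsilon\hat\sigma}=0$ in $\delta$ gives $\inner{\hat y}{\partial_{\delta\varepsilon}\hat\sigma}=-\inner\nu{\partial_\varepsilon\hat\sigma}=-\tfrac12$. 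Assembling these coefficients gives the stated expression for $\partial_{\delta\varepsilon}\hat\sigma$. The only real obstacle is bookkeeping: checking the sign in the $\partial_\delta$ computation, using $\hat\eta=\hat y+\hat\sigma$, and confirming that the dual basis exists. The latter holds by the standing assumption that $\nu$, $\hat y$, $\hat\sigma$ form a basis.
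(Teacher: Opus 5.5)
Your proof is correct and follows essentially the same route as the paper: you derive the affine relation $\inner{\hat\sigma}\nu=\inner\sigma\nu+\frac12(\varepsilon-\delta)$, the plane constraint $\inner{\hat y}{\hat\sigma}=-\frac12\norm{\hat y}^2$ (which the paper reads off from $\hat\sigma\in\S^2_{k_0}\cap(\S^2_{k_0}-\hat y)$), and $\norm{\hat\sigma}^2=k_0^2$, then differentiate and read off the coefficients in the dual basis. The only cosmetic difference is that you differentiate $\inner{\hat y}{\partial_\varepsilon\hat\sigma}=0$ in $\delta$, whereas the paper differentiates the $\delta$-derivative of the plane constraint in $\varepsilon$; both give the same identity.
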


\begin{proof}
 We first calculate the projections of the derivatives of $\hat\sigma$ to the basis vectors $\nu$, $y+w+\delta\nu$, and $\hat\sigma(w,\delta,\varepsilon)$. For this purpose, we eliminate the unknown $\hat\eta$ from \autoref{eq:defeq} and find the condition
	\[ y-z+(\delta-\varepsilon)\nu = -\hat\sigma(w,\delta,\varepsilon)+H_\nu\hat\sigma(w,\delta,\varepsilon) = -2\inner{\hat\sigma(w,\delta,\varepsilon)}\nu\nu. \]
	Using that $y-z=-2\inner\sigma\nu\nu$, this implies that
	\[ 
	\inner{\hat\sigma(w,\delta,\varepsilon)}\nu = \inner\sigma\nu-\frac12(\delta-\varepsilon). 
	\]
	We thus have
	\begin{equation}\label{eq:ProjectionsSigmaBasis1}
	\langle \partial_\varepsilon \hat\sigma(w,\delta,\varepsilon), \nu \rangle = \frac12, \quad
	\langle \partial_\delta \hat\sigma(w,\delta,\varepsilon), \nu \rangle = -\frac12, \quad
	\langle \partial_{\delta\varepsilon} \hat\sigma(w,\delta,\varepsilon), \nu \rangle = 0.
	\end{equation}
	
	Additionally, we know that
	\[ \hat\sigma(w,\delta,\varepsilon)\in\Sigma_2\cap(S_{e_3}-(y+w+\delta\nu))\subseteq\S_{k_0}^2\cap(\S_{k_0}^2-(y+w+\delta\nu)) \]
	so that it lies in particular in the plane through $-\frac{y+w+\delta\nu}2$ orthogonal to $y+w+\delta\nu$, that is,
	\[ \inner{\hat\sigma(w,\delta,\varepsilon)+\tfrac12(y+w+\delta\nu)}{y+w+\delta\nu} = 0. \]
	We take the derivative of this with respect to $\delta$ and get
	\[ \inner{\partial_\delta\hat\sigma(w,\delta,\varepsilon)+\tfrac12\nu}{y+w+\delta\nu}+\inner{\hat\sigma(w,\delta,\varepsilon)+\tfrac12(y+w+\delta\nu)}{\nu} = 0. \]
	
	Taking also the derivatives of the last two equations with respect to $\varepsilon$ gives us the expressions
	\begin{equation}\label{eq:ProjectionsSigmaBasis2}
	\begin{aligned}
			&\inner{\partial_\varepsilon\hat\sigma(w,\delta,\varepsilon)}{y+w+\delta\nu} = 0, \\
			&\inner{\partial_\delta\hat\sigma(w,\delta,\varepsilon)}{y+w+\delta\nu} =- \inner{y+w+\delta\nu+\hat\sigma(w,\delta,\varepsilon)}\nu = -\inner{\hat\eta(w,\delta,\varepsilon)}\nu,\\
			&\inner{\partial_{\delta\varepsilon}\hat\sigma(w,\delta,\varepsilon)}{y+w+\delta\nu} = -\inner{\partial_\varepsilon\hat\sigma(w,\delta,\varepsilon)}\nu = -\frac12.
		\end{aligned}
	\end{equation}
	
	Moreover, we have $\|\hat\sigma(w,\delta,\varepsilon)\|^2 = \inner{\hat \sigma(w,\delta,\varepsilon)}{\hat{\sigma}(w,\delta,\varepsilon)} = k_0^2$ and therefore
	\begin{equation}\label{eq:ProjectionsSigmaBasis3}
	\begin{aligned}
		&\langle \partial_\varepsilon \hat\sigma(w,\delta,\varepsilon), \hat\sigma (w,\delta,\varepsilon)\rangle =0, \\ 
		&\langle \partial_\delta \hat\sigma(w,\delta,\varepsilon), \hat\sigma(w,\delta,\varepsilon) \rangle = 0,\\
		&\langle \partial_{\delta\varepsilon} \hat\sigma(w,\delta,\varepsilon), \hat\sigma(w,\delta,\varepsilon) \rangle = - \langle \partial_\varepsilon \hat\sigma(w,\delta,\varepsilon), \partial_\delta \hat\sigma (w,\delta,\varepsilon)\rangle.
		\end{aligned}
	\end{equation}

	The partial derivatives of $\hat\sigma$ can now be expressed as a linear combination of the dual basis $(v_j(w,\delta,\varepsilon))_{j=1}^3$ to $\nu$, $y+w+\delta\nu =\hat\eta(w,\delta,\varepsilon)-\hat\sigma(w,\delta,\varepsilon)$, and $\hat\sigma(w,\delta,\varepsilon)$. We get with \autoref{eq:ProjectionsSigmaBasis1}, \autoref{eq:ProjectionsSigmaBasis2}, and \autoref{eq:ProjectionsSigmaBasis3} that
	\begin{align*}
		&\partial_\varepsilon \hat\sigma(w,\delta,\varepsilon) = \frac12 v_1(w,\delta,\varepsilon), \\
		&\partial_\delta \hat\sigma (w,\delta,\varepsilon)= -\frac12 v_1(w,\delta,\varepsilon) - \langle \hat\eta(w,\delta,\varepsilon), \nu \rangle v_2(w,\delta,\varepsilon).
	\end{align*}
	Plugging this into the last equation from \autoref{eq:ProjectionsSigmaBasis3} gives us for the second derivative
	\[ 
		\inner{\partial_{\delta\varepsilon}\hat\sigma(w,\delta,\varepsilon)}{\hat\sigma(w,\delta,\varepsilon)} = \frac14\norm{v_1(w,\delta,\varepsilon)}^2+\frac12\inner{\hat\eta(w,\delta,\varepsilon)}\nu\inner{v_1(w,\delta,\varepsilon)}{v_2(w,\delta,\varepsilon)},
	 \]
	and therefore
	\[
		\partial_{\delta\varepsilon} \hat\sigma(w,\delta,\varepsilon) = -\frac12 v_2(w,\delta,\varepsilon) +\Big(  \frac14 \|v_1(w,\delta,\varepsilon)\|^2 + \frac12 \langle \hat\eta(w,\delta,\varepsilon), \nu \rangle \langle v_1(w,\delta,\varepsilon), v_2(w,\delta,\varepsilon) \rangle \Big) v_3(w,\delta,\varepsilon).
	\]
\end{proof}
	
	\begin{lemma}\label{thSigmaSimple}
		The derivatives of $\hat\sigma$ at points $(-\eta+\tilde\eta,0,0)$ for vectors $\tilde\eta\in S_{e_3}$ such that $-\eta+\tilde\eta\in U$ are given by
		\begin{align}
			&\partial_\varepsilon\hat\sigma(-\eta+\tilde\eta,0,0) = \frac12\mu(\tilde\eta)\tilde\eta\times\sigma, \label{eqSigmaEps} \\
			&\partial_\delta\hat\sigma(-\eta+\tilde\eta,0,0) = -\frac12\mu(\tilde\eta)\tilde\eta\times\sigma-\mu(\tilde\eta)\inner{\tilde\eta}\nu\nu\times\sigma,\text{ and} \label{eqSigmaDelta} \\
			&\pi_\sigma\big(\partial_{\delta\varepsilon}\hat\sigma(-\eta+\tilde\eta,0,0)\big) = \frac12\mu^3(\tilde\eta)\big(\alpha(\tilde\eta)\tilde\eta\times\sigma+\beta(\tilde\eta)\nu\times\sigma\big), \label{eqSigmaDeltaEps}
		\end{align}
		with the functions
		\begin{align}
			&\mu(\tilde\eta)\coloneqq\frac1{\inner{\nu\times\tilde\eta}\sigma}, \label{eqMu} \\
			&\alpha(\tilde\eta) \coloneqq \inner\sigma\nu\gamma(\tilde\eta), \label{eqAlpha} \\
			&\beta(\tilde\eta) \coloneqq \frac1{\mu^2(\tilde\eta)}-(\inner\sigma{\tilde\eta}-k_0^2)\gamma(\tilde\eta),\text{ and} \label{eqBeta} \\
			&\gamma(\tilde\eta) \coloneqq \frac1{k_0^2}\left(\inner{\tilde\eta}\nu\inner{\tilde\eta\times\sigma}{\nu\times\sigma}-\frac12\norm{\tilde\eta\times\sigma}^2\right). \label{eqGamma}
		\end{align}
		Here, we denote by $\pi_\sigma$ the orthogonal projection onto $\sigma^\perp$ as in \autoref{eq:projection}.
	\end{lemma}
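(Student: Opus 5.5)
We evaluate the formulas of \autoref{thSigma} at the points $(w,\delta,\varepsilon)=(-\eta+\tilde\eta,0,0)$. By \autoref{thConstantSol}, there $\hat\sigma=\sigma$ and $\hat\eta=\tilde\eta$, so the basis $\nu$, $\hat\eta-\hat\sigma$, $\hat\sigma$ becomes $\nu$, $\tilde\eta-\sigma$, $\sigma$. The task therefore reduces to computing the dual basis $(v_1,v_2,v_3)$ of this explicit triple in $\R^3$. We then substitute it into the three identities of \autoref{thSigma}, and for the mixed derivative we apply $\pi_\sigma$.

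To compute the dual basis we use the standard cross-product formula. For a basis $a_1,a_2,a_3$ of $\R^3$ the dual vectors are $v_1=\frac{a_2\times a_3}{\det}$, $v_2=\frac{a_3\times a_1}{\det}$, and $v_3=\frac{a_1\times a_2}{\det}$, where $\det=\inner{a_1}{a_2\times a_3}$. With $a_1=\nu$, $a_2=\tilde\eta-\sigma$, $a_3=\sigma$ we obtain:
\begin{itemize}
\item $a_2\times a_3=\tilde\eta\times\sigma$, because $\sigma\times\sigma=0$;
\item $\det=\inner{\nu}{\tilde\eta\times\sigma}=\inner{\nu\times\tilde\eta}{\sigma}=1/\mu(\tilde\eta)$;
\item $v_1=\mu\,\tilde\eta\times\sigma$ and $v_2=\mu\,\sigma\times\nu=-\mu\,\nu\times\sigma$;
\item $v_3=\mu\,\nu\times(\tilde\eta-\sigma)$.
\end{itemize}
Substituting $v_1$ into $\partial_\varepsilon\hat\sigma=\frac12v_1$ gives \autoref{eqSigmaEps} immediately. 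Substituting $v_1$ and $v_2$ into $\partial_\delta\hat\sigma=-\frac12v_1-\inner{\tilde\eta}\nu v_2$ gives \autoref{eqSigmaDelta}, where the sign is fixed by the sign of $v_2$; this sign should be checked carefully against the stated formula.

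For the mixed derivative, $v_1$ and $v_2$ are already orthogonal to $\sigma$, so $\pi_\sigma$ leaves them unchanged and only $\pi_\sigma v_3$ requires work. We decompose $\nu\times(\tilde\eta-\sigma)$ in the orthogonal frame $\sigma$, $\nu\times\sigma$, $\sigma\times(\nu\times\sigma)$. Equivalently, since $\tilde\eta\times\sigma$ and $\nu\times\sigma$ span $\sigma^\perp$ (by linear independence), we solve
\[
\pi_\sigma(\nu\times\tilde\eta)=p\,\tilde\eta\times\sigma+q\,\nu\times\sigma .
\]
The coefficients $p,q$ follow from the BAC--CAB identity and the relation $\inner{\nu\times\tilde\eta}{\tilde\eta\times\sigma}=\inner{\nu}{\tilde\eta}\inner{\tilde\eta}{\sigma}-\norm{\tilde\eta}^2\inner{\nu}{\sigma}$, together with $\norm{\tilde\eta}=\norm\sigma=k_0$. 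Next we compute the scalar prefactor $\frac14\norm{v_1}^2+\frac12\inner{\tilde\eta}\nu\inner{v_1}{v_2}$. Up to the factor $\mu^2$, this equals
\[
\tfrac14\norm{\tilde\eta\times\sigma}^2-\tfrac12\inner{\tilde\eta}\nu\inner{\tilde\eta\times\sigma}{\nu\times\sigma}=-\tfrac{k_0^2}{2}\gamma(\tilde\eta),
\]
which explains why $\gamma$ appears in the statement. Collecting the term $-\frac12v_2=\frac12\mu\,\nu\times\sigma$ with $-\frac{k_0^2}{2}\mu^3\gamma\,\pi_\sigma(\nu\times\tilde\eta)$, and factoring out $\frac12\mu^3$, should yield the coefficients $\alpha=\inner\sigma\nu\gamma$ and $\beta=\mu^{-2}-(\inner\sigma{\tilde\eta}-k_0^2)\gamma$.

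The main obstacle is this last step: the decomposition of $\pi_\sigma(\nu\times\tilde\eta)$ in the non-orthogonal frame $\tilde\eta\times\sigma$, $\nu\times\sigma$. A convenient approach is to write $\tilde\eta\times\sigma$ itself in the orthogonal frame (as is done later in the proof of \autoref{thC}) and then invert a $2\times2$ system. One also uses that $\inner{\nu\times\sigma}{\nu\times\tilde\eta}=\inner\sigma{\tilde\eta}-\inner\sigma\nu\inner{\tilde\eta}\nu$, which produces the term $\inner\sigma{\tilde\eta}-k_0^2$. The sign conventions (orientation of the cross products and the sign of $\mu$) need careful bookkeeping. If a discrepancy with the stated $\beta$ appears, I would verify the identity numerically in a concrete configuration before hunting for the algebraic slip.
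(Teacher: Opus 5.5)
Your plan is the paper's proof. You evaluate \autoref{thSigma} at $(-\eta+\tilde\eta,0,0)$ via \autoref{thConstantSol} and obtain $v_1=\mu\,\tilde\eta\times\sigma$, $v_2=-\mu\,\nu\times\sigma$, $v_3=\mu\,\nu\times(\tilde\eta-\sigma)$ from the cross-product formula. You then expand $\pi_\sigma(\nu\times\tilde\eta)$ in the frame $\tilde\eta\times\sigma$, $\nu\times\sigma$. The paper does this with the dual vectors $u_1\propto(\nu\times\sigma)\times\sigma$ and $u_2\propto(\tilde\eta\times\sigma)\times\sigma$, which is the same $2\times2$ inversion. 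Both places where you hedged can be settled.

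\textbf{The sign in \autoref{eqSigmaDelta}.} The sign you flagged is a genuine error in the statement, not in your $v_2$. Substitution gives
\[
\partial_\delta\hat\sigma(-\eta+\tilde\eta,0,0)=-\tfrac12\mu(\tilde\eta)\,\tilde\eta\times\sigma+\mu(\tilde\eta)\inner{\tilde\eta}\nu\,\nu\times\sigma ,
\]
with a plus sign. You can check this independently of the dual basis. Differentiating $\norm{\hat\sigma+y+w+\delta\nu}^2=k_0^2$ in $\delta$ gives $\inner{\partial_\delta\hat\sigma}{\tilde\eta}=-\inner{\tilde\eta}\nu$ at this point. The stated formula instead yields $+\inner{\tilde\eta}\nu$, because $\inner{\nu\times\sigma}{\tilde\eta}=-1/\mu(\tilde\eta)$. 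The paper's proof asserts that \autoref{eqSigmaDelta} follows ``directly'' and so contains the same slip. The slip is harmless downstream: it only flips the sign of the $\mathrm D^2c(\sigma)(\tilde\eta\times\sigma,\nu\times\sigma)$ term in \autoref{eqC}. That term carries the factor $1/\mu=-\tilde\eta_2$ and drops out when the proof of \autoref{thC} sets $\tilde\eta_2=0$.

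\textbf{The formula \autoref{eqSigmaDeltaEps}.} This one holds exactly as stated, and there is no discrepancy in $\beta$. You get $p=-\inner\sigma\nu/k_0^2$ and $q=\inner\sigma{\tilde\eta}/k_0^2$, hence
\[
\pi_\sigma v_3=\tfrac{\mu}{k_0^2}\big((\inner\sigma{\tilde\eta}-k_0^2)\,\nu\times\sigma-\inner\sigma\nu\,\tilde\eta\times\sigma\big).
\]
Multiplying by your prefactor $-\frac{k_0^2}{2}\mu^2\gamma$ and adding $-\frac12v_2=\frac12\mu\,\nu\times\sigma$ gives precisely $\frac12\mu^3(\alpha\,\tilde\eta\times\sigma+\beta\,\nu\times\sigma)$.

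One caution: compute the prefactor from $\norm{v_1}^2$ and $\inner{v_1}{v_2}$ as you did. If you instead recompute it as $-\inner{\partial_\varepsilon\hat\sigma}{\partial_\delta\hat\sigma}$ using the misprinted \autoref{eqSigmaDelta}, the sign of the $\inner{\tilde\eta}\nu$ term inside $\gamma$ flips.
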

	
	\begin{proof}
		According to \autoref{thConstantSol} we have that $\hat\sigma(-\eta+\tilde\eta,0,0)=\sigma$ and $\hat\eta(-\eta+\tilde\eta,0,0)=\tilde\eta$. Hence, we obtain from \autoref{thSigma} that
		\begin{equation}\label{eq:sigmaSpecialDer}
		\begin{aligned}
			&\partial_\varepsilon\hat\sigma(-\eta+\tilde\eta,0,0) = \frac12v_1(\tilde\eta), \\
			&\partial_\delta\hat\sigma(-\eta+\tilde\eta,0,0) = -\frac12v_1(\tilde\eta)-\inner{\tilde\eta}\nu v_2(\tilde\eta),\text{ and} \\
			&\partial_{\delta\varepsilon}\hat\sigma(-\eta+\tilde\eta,0,0) = -\frac12v_2(\tilde\eta)+\left(\frac14\norm{v_1(\tilde\eta)}^2+\frac12\inner{\tilde\eta}\nu\inner{v_1(\tilde\eta)}{v_2(\tilde\eta)}\right)v_3(\tilde\eta),
		\end{aligned}
		\end{equation}
		where $(v_j(\tilde\eta))_{j=1}^3$ is the dual basis to $\nu$, $\tilde\eta-\sigma$, and $\sigma$. The dual basis vectors are given by
		\begin{equation}\label{eq:basisv}
		\begin{aligned}
			&v_1(\tilde\eta) = \frac{(\tilde\eta-\sigma)\times\sigma}{\inner{(\tilde\eta-\sigma)\times\sigma}\nu} = \mu(\tilde\eta)\tilde\eta\times\sigma, \\
			&v_2(\tilde\eta) = \frac{\nu\times\sigma}{\inner{\nu\times\sigma}{\tilde\eta-\sigma}} = -\mu(\tilde\eta)\nu\times\sigma,\text{ and} \\
			&v_3(\tilde\eta) = \frac{\nu\times(\tilde\eta-\sigma)}{\inner{\nu\times(\tilde\eta-\sigma)}\sigma} = \mu(\tilde\eta)\nu\times(\tilde\eta-\sigma).
		\end{aligned}
		\end{equation}
		where $\mu$ is defined by \autoref{eqMu}.
	
			For the derivatives with respect to $\varepsilon$ and $\delta$, this directly yields \autoref{eqSigmaEps} and \autoref{eqSigmaDelta}.
			
			To write the vector $\pi_\sigma(v_3(\tilde\eta))$ in the basis $\tilde\eta\times\sigma$ and $\nu\times\sigma$ of $\mathrm T_\sigma\S^2_{k_0}$, we consider the dual basis
			\begin{align*}
				&u_1(\tilde\eta)\coloneqq\frac{(\nu\times\sigma)\times\sigma}{\inner{(\nu\times\sigma)\times\sigma}{\tilde\eta\times\sigma}} = -\frac{\mu(\tilde\eta)}{k_0^2}(\nu\times\sigma)\times\sigma, \\
				&u_2(\tilde\eta)\coloneqq\frac{(\tilde\eta\times\sigma)\times\sigma}{\inner{(\tilde\eta\times\sigma)\times\sigma}{\nu\times\sigma}} = \frac{\mu(\tilde\eta)}{k_0^2}(\tilde\eta\times\sigma)\times\sigma.
			\end{align*}
			
			Next, we compute the projections of $\nu\times\tilde{\eta}$ onto the dual basis
			\[ \inner{u_1(\tilde\eta)}{\nu\times\tilde\eta} = -\frac{\mu(\tilde\eta)}{k_0^2}\inner{(\nu\times\sigma)\times\sigma}{\nu\times\tilde\eta} = -\frac{\mu(\tilde\eta)}{k_0^2}\big(\inner{\nu\times\sigma}\nu\inner\sigma{\tilde\eta}-\inner{\nu\times\sigma}{\tilde\eta}\inner\sigma\nu\big) = -\frac1{k_0^2}\inner\sigma\nu \]
			and
			\[ \inner{u_2(\tilde\eta)}{\nu\times\tilde\eta} = \frac{\mu(\tilde\eta)}{k_0^2}\inner{(\tilde\eta\times\sigma)\times\sigma}{\nu\times\tilde\eta} = \frac{\mu(\tilde\eta)}{k_0^2}\big(\inner{\tilde\eta\times\sigma}\nu\inner\sigma{\tilde\eta}-\inner{\tilde\eta\times\sigma}{\tilde\eta}\inner\sigma\nu\big) = \frac1{k_0^2}\inner\sigma{\tilde\eta}, \]
			Using these projections, we can now write $\pi_\sigma(v_3(\tilde\eta))$ as a linear combination of the basis vectors $\tilde\eta\times\sigma$ and $\nu\times\sigma$ by expressing $\nu\times\tilde\eta$ in the last equation of \autoref{eq:basisv} in this basis:
			\begin{align*}
				\pi_\sigma(v_3(\tilde\eta)) 
				&=			-\mu(\tilde\eta)\nu\times\sigma+\mu(\tilde\eta)\inner{u_1(\tilde\eta)}{\nu\times\tilde\eta}\tilde\eta\times\sigma+\mu(\tilde\eta)\inner{u_2(\tilde\eta)}{\nu\times\tilde\eta}\nu\times\sigma \\
				&= \frac{\mu(\tilde\eta)}{k_0^2}(\inner\sigma{\tilde\eta}-k_0^2)\nu\times\sigma-\frac{\mu(\tilde\eta)}{k_0^2}\inner\sigma\nu\tilde\eta\times\sigma.
			\end{align*}
			
			Plugging this into the last equation of \autoref{eq:sigmaSpecialDer}, we get with
			\[ \inner{v_1(\tilde\eta)}{v_2(\tilde\eta)}=-\mu^2(\tilde\eta)\inner{\tilde\eta\times\sigma}{\nu\times\sigma} \]
			that
			
			\begin{align*}
				&\pi_\sigma\big(\partial_{\delta\varepsilon}\hat\sigma(-\eta+\tilde\eta,0,0)\big) = \frac12\mu(\tilde\eta)\nu\times\sigma \\
				&\qquad\qquad+\frac{\mu^3(\tilde\eta)}{k_0^2}\left(\frac14\norm{\tilde\eta\times\sigma}^2-\frac12\inner{\tilde\eta}\nu\inner{\tilde\eta\times\sigma}{\nu\times\sigma}\right)\left((\inner\sigma{\tilde\eta}-k_0^2)\nu\times\sigma-\inner\sigma\nu\tilde\eta\times\sigma\right) \\
				&\qquad= \frac12\mu^3(\tilde\eta)\big(\alpha(\tilde\eta)\tilde\eta\times\sigma+\beta(\tilde\eta)\nu\times\sigma\big).
			\end{align*}
	\end{proof}

	\begin{lemma}\label{thCoeff}
		Writing $\tilde\eta\in S_{e_3}$ in the orthogonal basis $\sigma$, $\frac{\nu\times\sigma}{\norm{\nu\times\sigma}^2}$, $\frac{\sigma\times(\nu\times\sigma)}{\norm{\nu\times\sigma}^2}$, that is,
		\begin{equation}\label{eq:etaBasis} 
		\tilde\eta = \tilde\eta_1\sigma+\tilde\eta_2\frac{\nu\times\sigma}{\norm{\nu\times\sigma}^2}+\tilde\eta_3\frac{\sigma\times(\nu\times\sigma)}{\norm{\nu\times\sigma}^2}, 
		\end{equation}
		we get for the functions $\mu$, $\alpha$, $\beta$, and $\gamma$ defined in \autoref{eqMu}, \autoref{eqAlpha}, \autoref{eqBeta}, and \autoref{eqGamma} the expressions
		\begin{align*}
			&\mu(\tilde\eta) = -\frac1{\tilde\eta_2}, \\
			&\alpha(\tilde\eta) = \inner\sigma\nu\gamma(\tilde\eta), \\
			&\beta(\tilde\eta) = \tilde\eta_2^2-k_0^2(\tilde\eta_1-1)\gamma(\tilde\eta),\text{ and} \\
			&\gamma(\tilde\eta) = \inner\sigma\nu\tilde\eta_1\tilde\eta_3-\frac{\tilde\eta_2^2}{2\norm{\nu\times\sigma}^2}+\left(1-\frac{k_0^2}{2\norm{\nu\times\sigma}^2}\right)\tilde\eta_3^2.
		\end{align*}
	\end{lemma}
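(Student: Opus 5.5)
The plan is a direct coordinate computation: I would first record a few identities for the orthogonal basis in \autoref{eq:etaBasis}, and then substitute them into the definitions \autoref{eqMu}--\autoref{eqGamma}. Write $n\coloneqq\nu\times\sigma$ and $m\coloneqq\sigma\times(\nu\times\sigma)$. Using $\norm\sigma=k_0$, $n\perp\sigma$ and $m\perp\sigma$, the identities I need are:
\[ \inner{\tilde\eta}{\sigma}=k_0^2\tilde\eta_1,\qquad \inner{\tilde\eta}{n}=\tilde\eta_2,\qquad m=k_0^2\nu-\inner\sigma\nu\sigma. \]
From the last one, $\inner m\nu=k_0^2-\inner\sigma\nu^2=\norm{n}^2$ and $\norm m=k_0\norm n$. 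Since $\inner n\nu=0$, this gives
\[ \inner{\tilde\eta}\nu=\tilde\eta_1\inner\sigma\nu+\tilde\eta_3. \]
This is presumably the relation \autoref{eqEtaNu} quoted in the proof of \autoref{thC}.

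For $\mu$, cyclicity of the triple product gives $\inner{\nu\times\tilde\eta}{\sigma}=\inner{\tilde\eta}{\sigma\times\nu}=-\inner{\tilde\eta}{n}=-\tilde\eta_2$, so $\mu(\tilde\eta)=-1/\tilde\eta_2$. The formula for $\alpha$ is simply its definition \autoref{eqAlpha}. For $\beta$, substituting $1/\mu^2=\tilde\eta_2^2$ and $\inner\sigma{\tilde\eta}-k_0^2=k_0^2(\tilde\eta_1-1)$ into \autoref{eqBeta} gives the claimed expression at once.

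The only step requiring some care is $\gamma$, because it needs $\tilde\eta\times\sigma$. I would use the two vector identities $(\sigma\times w)\times\sigma=k_0^2w$ for $w\perp\sigma$ and $n\times\sigma=-m$. With these,
\[ \tilde\eta\times\sigma=-\tilde\eta_2\frac{m}{\norm n^2}+k_0^2\tilde\eta_3\frac{n}{\norm n^2}, \]
which is consistent with the identity used in the proof of \autoref{thC}. Because $m\perp n$, this yields
\[ \inner{\tilde\eta\times\sigma}{n}=k_0^2\tilde\eta_3,\qquad \norm{\tilde\eta\times\sigma}^2=\frac{k_0^2(\tilde\eta_2^2+k_0^2\tilde\eta_3^2)}{\norm n^2}. \]
Inserting these and $\inner{\tilde\eta}\nu$ into \autoref{eqGamma}, the factor $1/k_0^2$ cancels. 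The result is
\[ \gamma(\tilde\eta)=\inner\sigma\nu\tilde\eta_1\tilde\eta_3+\tilde\eta_3^2-\frac{\tilde\eta_2^2}{2\norm n^2}-\frac{k_0^2\tilde\eta_3^2}{2\norm n^2}, \]
which is exactly the asserted formula. The main risk in the whole argument is a sign error in the cross-product manipulations, so I would double-check $n\times\sigma=-m$ and the orientation in the triple product used for $\mu$.
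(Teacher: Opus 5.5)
Your proposal is correct and is essentially the paper's own computation. It uses the same expansion of $\tilde\eta\times\sigma$ in the vectors $\nu\times\sigma$ and $\sigma\times(\nu\times\sigma)$, the same identity $\inner{\tilde\eta}\nu=\tilde\eta_1\inner\sigma\nu+\tilde\eta_3$ for $\gamma$, and the same substitution $\inner\sigma{\tilde\eta}-k_0^2=k_0^2(\tilde\eta_1-1)$ for $\beta$. The only differences are cosmetic: the paper obtains $\mu$ by inserting that expansion into $\inner\nu{\tilde\eta\times\sigma}$ rather than by cyclicity, and it writes $\inner{\tilde\eta\times\sigma}{\nu\times\sigma}=\inner{\tilde\eta}{\sigma\times(\nu\times\sigma)}$. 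The sign checks you flag come out as you state.
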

	
	\begin{proof}
		Writing $\tilde{\eta}$ as in \autoref{eq:etaBasis} gives us
		\[ \frac1{\mu(\tilde\eta)} = \inner\nu{\tilde\eta\times\sigma} = \frac{\tilde\eta_2}{\norm{\nu\times\sigma}^2}\inner\nu{(\nu\times\sigma)\times\sigma}+\frac{k_0^2\tilde\eta_3}{\norm{\nu\times\sigma}^2}\inner\nu{\nu\times\sigma} = -\tilde{\eta}_2\frac{\norm{\nu\times\sigma}^2}{\norm{\nu\times\sigma}^2}=-\tilde\eta_2. \]
		
		Moreover, we get with
		\begin{equation}\label{eqEtaNu}
			\inner{\tilde\eta}\nu = \left(\tilde\eta_1\inner\sigma\nu+\tilde\eta_3\frac{\inner{\sigma\times(\nu\times\sigma)}\nu}{\norm{\nu\times\sigma}^2}\right) = \tilde\eta_1\inner\sigma\nu+\tilde\eta_3
		\end{equation}
		that
		\begin{align*}
			\gamma(\tilde\eta) &= \frac1{k_0^2}\left(\inner{\tilde\eta}\nu\inner{\tilde\eta}{\sigma\times(\nu\times\sigma)}-\frac12\norm{\tilde\eta\times\sigma}^2\right) \\
			&= \frac1{k_0^2}\left((\tilde\eta_1\inner\sigma\nu+\tilde\eta_3)k_0^2\tilde\eta_3-\frac1{2\norm{\nu\times\sigma}^4}\norm{\tilde\eta_2(\nu\times\sigma)\times\sigma+k_0^2\tilde\eta_3\nu\times\sigma}^2\right) \\
			&= \inner\sigma\nu\tilde\eta_1\tilde\eta_3-\frac{\tilde\eta_2^2}{2\norm{\nu\times\sigma}^2}+\left(1-\frac{k_0^2}{2\norm{\nu\times\sigma}^2}\right)\tilde\eta_3^2.
		\end{align*}
		
		The expression for $\alpha$ then follows immediately and the expression for $\beta$ is obtained by remarking that
		\[ \inner\sigma{\tilde\eta}-k_0^2 = k_0^2(\tilde\eta_1-1). \]
	\end{proof}

	\FloatBarrier
	\printbibliography
	
\end{document}